\tikzstyle cross=[preaction={draw=white, -, line width=6pt}]
\tikzstyle normal=[thick]
                        \theoremstyle{plain}
\newtheorem{theorem}{Theorem}[section]
\newtheorem{thm}{Theorem}
\newtheorem{lemma}[theorem]{Lemma}
\newtheorem{coro}[theorem]{Corollary}
\newtheorem{proposition}[theorem]{Proposition}
\newtheorem{prop}[theorem]{Proposition}
\newtheorem{fact}[theorem]{Fact}
\newtheorem*{question}{Question}
\newtheorem*{oquestion}{Open Question}
\newtheorem{defn}[theorem]{Definition}
\theoremstyle{definition}
\newtheorem{rmk}[theorem]{Remark}
\newtheorem{example}[theorem]{Example}
\newtheorem*{Not}{Notations}
\theoremstyle{definition}
\def\BC{\mathbb C}
\def\BN{\mathbb N}
\def\BZ{\mathbb Z}
\def\BR{\mathbb R}
\def\BI{\mathbb I}
\def\BV{\mathbb V}
\def\CC{\mathcal C}
\def\CG{\mathcal G}
\def\CH{\mathcal H}
\def\CI{\mathcal I}
\def\CT{\mathcal T}
\def\Id{\mathrm{Id}}
\def\fS{\mathfrak S}
\def\cV{\mathcal V}
\def\be { \begin{equation} }
\def\ee { \end{equation} }
\def\bpm { \begin{pmatrix} }
\def\epm { \end{pmatrix} }
\def\B{\mathcal B}
\newcommand{\ro}{r}
\newcommand{\Hr}{H_r}
\newcommand{\V}{\mathsf{V}}
\newcommand{\slt}{{\mathfrak{sl}(2)}}
\newcommand{\Uq}{{U_q\slt}}
\newcommand{\UsltH}{{U_q^{H}\slt}}
\newcommand{\Ubar}{{\wb U_q^{H}\slt}}
\newcommand{\cat}{\mathscr{C}}
\newcommand{\qn}[1]{{\left\{#1\right\}}}
\newcommand{\qd}{{\mathsf d}}
\newcommand{\End}{\operatorname{End}}
\newcommand{\Hom}{\operatorname{Hom}}
\newcommand{\C}{\ensuremath{\mathbb{C}} }
\newcommand{\Z}{\ensuremath{\mathbb{Z}} }
\newcommand{\wb}{\overline}
\newcommand{\RR}{\operatorname{R}}
\newcommand{\RRR}{\mathcal{R}}
\newcommand{\perm}{\operatorname{perm}}
\newcommand{\Cob}{\mathcal{C}ob}
\newcommand{\Bn}{\mathcal{B}_n}
\newcommand{\PBn}{\mathcal{PB}_n}
\newcommand{\PB}{\mathcal{PB}}
\newcommand{\Sk}{\mathfrak{S}}
\newcommand{\Homeop}{\mathop{Homeo}^{+}}
\newcommand{\Homeo}{\mathop{Homeo}}
\newcommand{\Mod}{\mathop{Mod}}
\newcommand{\Mcg}{\mathop{M}}
\newcommand{\PMcg}{\mathop{PM}}
\newcommand{\Caping}{\mathop{Cap}}
\newcommand{\piz}{\mathop{\pi}_0}
\newcommand{\Vect}{\mathop{Span}}
\newcommand{\SL}{\mathop{SL}}
\newcommand{\PGL}{\mathop{PGL}}
\newcommand{\PSL}{\mathop{PSL}}
\newcommand{\Matrice}{\mathop{Mat}}
\newcommand{\RT}{\mathop{\mathcal{RT}}}
\newcommand{\qbin}[2]{\left[\begin{array}{c}
      #1 \\
      #2 \end{array}\right]}
\newcommand{\sjtop}[6]{\left|\begin{array}{ccc}#1 & #2 & #3 \\#4 & #5 &
      #6\end{array}\right|}
\def\cB{\mathcal B}
\newcommand{\bapp}{\left. \begin{array}{rcl}}
\newcommand{\eapp}{\end{array} \right.}
\newcommand{\bfct}{\left\lbrace \begin{array}{rcl}}
\newcommand{\efct}{\end{array} \right.}
\title{Non semi-simple TQFT of the sphere with $4$ punctures}
\author{Jules Martel}
\date{}
\begin{document}

\maketitle

\begin{abstract}
In this work, we compute the representation of the mapping class group of the sphere with $4$ punctures arising from the non semi-simple TQFT \cite{BCGP2}. We show that it is faithful. Lastly, we compare quantum representations of punctured spheres in general with those of braid groups. 
\end{abstract}

\tableofcontents

\section{Introduction}

In this introduction, we first define the mapping class groups. Then we provide a short overview of TQFTs leading to the non semi-simple case. Thank to these definitions, we lastly introduce the content of this paper. 

\subsection{Mapping class groups}

Let $S$ be an oriented surface, we denote by $\Homeop(S,\partial S)$ the group of orientation-preserving homeomophisms that fix the boundary pointwise, endowed with compact-open topology, and let $\Homeo_0(S,\partial S)$ be the connected component of the identity.

\begin{defn}[Mapping Class Group]
The {\em Mapping Class Group}  of $S$ is the group of isotopy classes of orientation preserving homeomorphisms of $S$. Namely:
\[
\begin{array}{rcl}
\Mod (S) & = & \piz(\Homeop(S,\partial S)) \\
& = & \Homeop (S,\partial S) / \Homeo _0 (S,\partial S)
\end{array} .
\]
\end{defn}

\begin{defn}[Braid Groups]
The {\em braid group} $\Bn$ is the mapping class group of the disk with $n\in \BN$ punctures. It has a presentation with $n-1$ generators $\sigma_1,\ldots,\sigma_{n-1}$ satisfying the so called {\em ``braid relations"}:
\begin{align*} \sigma_i \sigma_j = \sigma_j \sigma_i & \text{ if } |i-j| \le 2  \text{ and, } \\
\sigma_i \sigma_{i+1} \sigma_i = \sigma_{i+1} \sigma_i \sigma_{i+1} & \text{ for } i=1,\ldots, n-2.
\end{align*}
The {\em pure braid group} $\PB_n$ is the subgroup of $\Bn$ consisting in mapping classes arising from homeomorphisms fixing the punctures pointwise. 
\end{defn}

\begin{defn}
We denote the mapping class group of the sphere with $n \in \BN$ punctures by $\Mcg (0,n)$.
\end{defn}

%

\begin{example}[$2$ and $3$-punctured sphere]
The mapping class group of the sphere with $3$ points removed, denoted $\Mcg (0,3)$, is exactly $\fS _3$, the group of permutation of three elements. In the case of two punctures one has $\Mcg (0,2) = \BZ / 2 \BZ$.
%
%
%
\end{example}

The first non trivial example for closed surface is the torus, it is a main example as it inspires the general classification of mapping classes.

\begin{example}[Torus]
Let $T^2$ be the torus (genus $1$ closed surface). There is a surjective homomorphism:
\[
\Mod(T^2) \to SL(2,\BZ)
\]
given by the action of mapping classes on $H_1(T^2,\BZ)$. By theorem it gives an isomorphism between $\Mod(T^2)$ and $\PSL(2,\BZ)$, see \cite[Theorem 2.5]{F-M}.
\end{example}

\subsection{Non semi-simple TQFTs}\label{recallsonTQFT}

In \cite{BCGP2}, Blanchet--Costantino--Geer--Patureau construct a TQFT from the non semi-simple category $\cat$ of $\Ubar$ weight modules. We present here a non exhaustive summary of the construction, recalling first what is a TQFT and what informations are provided by these theories. This part gives ideas before fixing concrete notations for the precise case of interest (which will be done rigorously in Section \ref{sectionm(04)faithful}).

\subsubsection{Topological quantum field theory}

\begin{defn}[Category of cobordisms]
An oriented $(n + 1)$-manifold $M$ with boundary decomposed as $\partial M = - \Sigma_1 \bigsqcup \Sigma_2$, where $\Sigma_1$, $\Sigma_2$ are oriented $n$-manifolds, and $-\Sigma_1$ means $\Sigma_1$ with reversed orientation, is called a cobordism from $\Sigma_1$ to $\Sigma_2$. Given a cobordism $M_1$, from $\Sigma_1$ to $\Sigma$, and a cobordism $M_2$, from $\Sigma$ to $\Sigma_2$, one can glue these together along $\Sigma$ to obtain a cobordism from $\Sigma_1$ to $\Sigma_2$. Let the category $\Cob_{n+1}$ be the one whose objects are the oriented $n$-manifolds, whose morphisms are equivalence classes of cobordisms, and where gluing plays the role of composition. Two cobordisms from $\Sigma_1$ to $\Sigma_2$ are called equivalent if they are isomorphic rel. boundary (i.e. the isomorphism is required to be the identity on $\Sigma_1$ and $\Sigma_2$). Taking equivalence classes ensures that composition is associative, and the product manifold $\left[0,1  \right] \times \Sigma$ plays the role of the identity morphism of $\Sigma$. Observe that this category has an involution (given by orientation reversal) and a monoidality structure (given by disjoint union). 
\end{defn}

\begin{rmk}
This is the basic definition of the category of cobordisms. We usually restrict it to compact surfaces and allow richer cobordism. {\em ``Extra decorations"} of cobordisms can be of the following types: cobordism containing a banded link, decorated points in the surface, or cohomology class associated with objects. Then an appropriate generalization of the notion of isomorphism of cobordism is required. 
\end{rmk}

\begin{defn}[$(2+1)$-TQFT]
Let $\Cob_{2+1}$ be the category of $3$-dimensional cobordisms, and $k$ be a commutative ring. A {\em $(2+1)$-TQFT} is a functor:
\[
V: \Cob_{2+1} \to k-\text{modules}
\]
satisfying:
\begin{itemize}
\item[(Monoidality)] $V(\Sigma_1 \bigsqcup \Sigma_2) = V(\Sigma_1) \otimes V(\Sigma_2)$.
\item[(Duality)] $V(-\Sigma_1) = V(\Sigma_1)^*$ where $*$ stands for the dual module.
\item[(Unit)] $V(\emptyset)= k$.
\end{itemize}
\end{defn}

Again this is the initial definition of a TQFT, while extra decorated cobordisms need an adapted definition of a TQFT functor. 

\begin{rmk}[Mapping class group representations]\label{mcgTQFTrep}
Let $\Sigma$ be a surface, $\Phi$ a diffeomorphism of it and $V$ a $(2+1)$-TQFT. The mapping cylinder of $\Phi$ is the following manifold:
\[
I_{\Phi} = \left(\Sigma \times \left[ 0,1 \right] \right) 
\]
with $(x,1) \sim x$ and $(x,0) \sim \Phi(x)$. The manifold $I_{\Phi}$ is a cobordism between $\Sigma$ and itself, so that $V \left( I_{\Phi} \right)$ is an endomorphism of $V\left( \Sigma \right)$. The functoriality of $V$ together with the notion of isomorphism of cobordisms imply that:
\[
\bapp
\Mod \left( \Sigma \right) & \to & \End\left(V\left( \Sigma \right) \right) \\
\Phi & \mapsto & V( \Phi) := V \left( I_{\Phi} \right)  
\eapp
\]
is a representation of $\Mod \left( \Sigma \right)$ over $V\left( \Sigma \right)$. This remark shows that a TQFT functor provides a representation of the mapping class group for all surfaces. 
\end{rmk}

\subsubsection{Category of tangles and Reshetikhin -- Turaev functor}

\begin{defn}[Tangles]\label{tangles}
A {\em tangle} with $k$ inputs and $l$ outputs is a finite system of disjoint smoothly embedded oriented arcs and circles in $\BR^2 \times \left[0 , 1 \right]$ such that the endpoints of the arcs are the points $(1,0,0) , \ldots , (k,0,0)$ and $(1,0,1) , \ldots , (l,0,1)$. The circles lie in $\BR^2 \times \left(0 , 1 \right)$.

A tangle is {\em framed} if it is equipped with a non-singular normal vector field equal in the endpoints of the arcs to the vector $(0,-1,0)$. 

Two (framed) $(k,l)$-tangles $L_1$ and $L_2$ are said to be {\em isotopic} if $L_1$ may be smoothly deformed into $L_2$ staying in the class of (framed) $(k,l)$-tangles during the deformation. 
\end{defn}

\begin{rmk}
Braids are special types of tangles. Namely: a braid of $\Bn$ is a $(n,n)$-tangle with no circle. 
\end{rmk}

\begin{defn}[Category of Tangles]\label{catoftangles}
The  category of tangles $\CT$ is the category whose objects are non negative integers, and a morphism from $k$ to $l$ is a $(k,l)$-tangle. Let $f: k \to l$ and $g : l \to m$, the morphism $fg$ is represented by the tangle obtained by attaching $f$ on the top of $g$. 
\end{defn}

\begin{defn}[Category of colored tangles]\label{catofcoloredtangles}
Let $C$ be a category. A $C$-colored tangle is a tangle where every component is equipped with an object of $C$. More precisely, the category of $C$-colored tangles $\CT_C$ is the category whose objects are finite sequences $((V_1 , \epsilon_1) , \ldots , (V_m, \epsilon_m))$, where $V_1 , \ldots , V_m$ are objects of $C$ and $\epsilon_1 , \ldots , \epsilon_m \in \lbrace + , - \rbrace$. A morphism $\eta \to \eta'$ is an isotopy type of a $C$-colored framed tangle such that $\eta$ (resp. $\eta'$) is the sequence of colors and directions of those tangles which hit the bottom (resp. top) boundary endpoints. (The sign $+$ stands for the downward direction, while the $-$ the upward.)

This category is monoidal. The tensor product of two sequences $\eta$ and $\eta'$ is given by the concatenation of sequences. The tensor product of two morphisms $f$ and $g$ is obtained by placing the colored framed tangle representing $f$ to the left of the one representing $g$. 
\end{defn}

The idea of Reshetikhin and Turaev is to use a category of modules over a quantum algebra that has nice enough properties allowing to build topological invariants of tangles. In this paper we are interested in quantum algebra arising from the Lie algebra $\slt$. Let $\Uq$ be the quantized deformation algebra arising from the enveloping algebra of $\slt$ (this will be precised in what follows). 

\begin{theorem}[Reshetikhin-Turaev functor, \cite{RT}]\label{RTfunctor}
Let $\CC$ be the category of $\Uq$-modules. There exists a monoidal functor $\RT$ between $\CT_{\CC}$ and $\CC$. 
\end{theorem}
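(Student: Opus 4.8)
The plan is to construct $\RT$ from the ribbon structure carried by $\CC$, using the fact that framed oriented tangles form (essentially) the free ribbon category on its generating data. The key algebraic input is that $\Uq$ is a \emph{ribbon Hopf algebra}: beyond its Hopf structure (product, coproduct, antipode $S$) it carries a universal $R$-matrix $R \in \Uq \otimes \Uq$ making it quasi-triangular, together with a compatible ribbon element. A standard argument then upgrades $\CC$ to a ribbon category: the $R$-matrix endows $\CC$ with a braiding $c_{V,W}\colon V \otimes W \to W \otimes V$ (the action of $R$ postcomposed with the flip), the antipode supplies left and right duals together with evaluation and coevaluation morphisms, and the ribbon element yields a twist $\theta_V$ compatible with both the braiding and the duality.

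I would then define $\RT$ on objects by sending a sequence $((V_1,\epsilon_1),\ldots,(V_m,\epsilon_m))$ to $V_1^{\epsilon_1} \otimes \cdots \otimes V_m^{\epsilon_m}$, with the convention $V^+ = V$ and $V^- = V^*$ recording the orientation. On morphisms it suffices, by Definition \ref{catofcoloredtangles} and the monoidality of $\CT_\CC$, to prescribe $\RT$ on the elementary colored tangles that generate the category under composition and $\otimes$: the identity strands, the positive and negative crossings, and the cup/cap pairs. I assign the identity morphism to an identity strand, the braiding $c_{V,W}$ (resp.\ its inverse $c_{W,V}^{-1}$) to a positive (resp.\ negative) crossing, and the relevant (co)evaluation morphisms to the cups and caps.

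The substantive step is to check that these assignments descend to isotopy classes, i.e.\ that $\RT$ respects the defining relations of $\CT_\CC$. This reduces to invariance under the generating moves of framed oriented tangles: the second Reidemeister move follows from invertibility of the braiding, the third from the Yang--Baxter equation satisfied by $R$, the zig-zag (snake) identities from the triangle axioms of the duality, and the framed first Reidemeister move from the compatibility of the twist $\theta$ with the braiding and coevaluation. Monoidality of $\RT$ is then immediate, since the tensor product of tangles is concatenation of colour sequences together with horizontal juxtaposition of diagrams, and both operations are carried to $\otimes$ in $\CC$ by construction.

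I expect the main obstacle to be exactly this well-definedness on isotopy classes: one must first invoke a Reidemeister-type presentation guaranteeing that every ambient isotopy of framed oriented tangles is generated by the finite list of local moves above, and then verify that each ribbon axiom reproduces precisely the equality of morphisms that the corresponding move demands. In essence the theorem, due to Reshetikhin--Turaev \cite{RT}, encapsulates the statement that any ribbon category yields such a functor, so the proof consists in identifying the ribbon structure on $\CC$ and applying this general principle.
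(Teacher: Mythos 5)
The paper does not prove this statement: it is quoted from \cite{RT} and the text immediately afterwards concedes that it is ``loosely stated'' and that the category of $\Uq$-modules must be refined before the construction applies. Your outline is the standard Reshetikhin--Turaev argument (present framed oriented colored tangles by elementary generators, assign braiding, duality and twist data, verify the Turaev moves), and as a reconstruction of the cited proof it is essentially correct and complete in structure.

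One caveat is worth flagging, because it is precisely the refinement the paper alludes to. You assert that $\Uq$ carries a universal $R$-matrix $R\in\Uq\otimes\Uq$. In the setting this paper actually uses --- $q$ a root of unity and the unrolled algebra $\Ubar$ --- this is false, and the paper says so explicitly in Definition \ref{Ubarbraiding}: the operator $\RR$ involves $q^{H\otimes H}$, which is not an element of $\Ubar\otimes\Ubar$ but only acts well on weight modules. The correct formulation is therefore categorical rather than algebraic: one restricts to the subcategory $\cat$ of finite-dimensional weight modules, on which $\RR$ defines a natural family of isomorphisms satisfying the hexagon axioms, and one checks the pivotal/ribbon axioms at that level. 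This does not change the architecture of your proof --- the verification of the Reidemeister-type moves proceeds identically once the braiding, (co)evaluations and twist are in hand --- but the phrase ``universal $R$-matrix in $\Uq\otimes\Uq$'' should be replaced by ``braiding on the category of weight modules,'' and the hypothesis ``category of $\Uq$-modules'' in the statement should be understood as this refined subcategory. With that adjustment your argument matches the intended proof of the cited theorem.
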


The above theorem is loosely stated as one must refine the category of $\Uq$-modules before applying the construction for it to work. We will see cases of $\Uq$-modules categories for which the theorem holds. Historically there are two $\Uq$-modules category (for infinite versions of $\Uq$) for which this theorem holds: the semi-simple theory first introduced in \cite{RT}, and the non semi-simple one introduced for instance in \cite{CGP2}. For a categorical approach to the non semi-simple construction of a Reshetikhin-Turaev functor, see \cite{Marco}, while in this work we are interested in the concrete $\Uq$ case. This functor then also provides braid representations as braids are a sub-category of tangles. 
%
%

\subsubsection{Universal construction}

From certain categories of quantum groups modules, the Reshetikhin -- Turaev functor $\RT$ (see Definition \ref{RTfunctor}) provides invariants of links. The following is well known.

\begin{theorem}[Lickorish -- Wallace and Kirby Theorem]\label{lickorishwallace}
Any closed, orientable, connected $3$-manifold may be obtained by performing {\em Dehn surgery} on a framed link in the $3$-sphere with $\pm 1$ surgery coefficients. Two framed links give the same manifold if and only if they are related by a series of {\em Kirby moves}.
\end{theorem}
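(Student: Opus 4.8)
The plan is to establish the two assertions in turn: the existence of a $\pm 1$-framed surgery presentation (Lickorish--Wallace), and then the fact that two presentations yield the same manifold exactly when related by Kirby moves (Kirby). For existence I would begin from the classical fact that every closed orientable $3$-manifold $M$ admits a Heegaard splitting $M = H_g \cup_\varphi H_g$, where $H_g$ is the genus-$g$ handlebody and $\varphi$ is an orientation-reversing homeomorphism of $\Sigma_g = \partial H_g$ whose isotopy class determines $M$. Since $S^3$ also admits a genus-$g$ Heegaard splitting for every $g$ (the standard splitting and its stabilizations), after increasing $g$ if necessary I may write $\varphi = \psi \circ \varphi_0$, where $\varphi_0$ is the gluing producing $S^3$ and $\psi \in \Mod(\Sigma_g)$. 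Thus $M$ is obtained from $S^3$ by re-gluing the two handlebodies by the extra surface automorphism $\psi$.

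The essential inputs are then two theorems about surfaces and surgery. First, by Dehn's theorem (Lickorish's generators) the group $\Mod(\Sigma_g)$ is generated by Dehn twists along finitely many simple closed curves, so I can write $\psi = T_{c_1}^{\varepsilon_1} \cdots T_{c_k}^{\varepsilon_k}$ with $\varepsilon_i \in \{ +1, -1\}$. Second, Lickorish's key lemma states that re-gluing a Heegaard splitting by one extra Dehn twist $T_c^{\varepsilon}$ has the same effect on the underlying $3$-manifold as performing surgery, with framing $+1$ or $-1$ determined by the sign $\varepsilon$, on a copy of $c$ pushed slightly off $\Sigma_g$. Applying this lemma to the factors of $\psi$ one at a time, each Dehn twist contributes a single $\pm 1$-framed component, and one obtains $M$ as $\pm 1$ surgery on the framed link $L = c_1 \sqcup \cdots \sqcup c_k \subset S^3$. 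Because the base manifold is $S^3$ rather than a connected sum of copies of $S^1 \times S^2$, no integer framings enter and the $\pm 1$ conclusion is immediate.

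For Kirby's theorem, the \emph{if} direction is a direct check at the level of the trace $W_L$, the $4$-manifold obtained by attaching $2$-handles to $B^4$ along $L$ with the prescribed framings, whose boundary is the surgered manifold $M$. A handle slide leaves the diffeomorphism type of $W_L$ unchanged, hence fixes $\partial W_L = M$; a blow-up or blow-down (adding or deleting a disjoint $\pm 1$-framed unknot) replaces $W_L$ by its connected sum with $\pm \mathbb{CP}^2$, an operation that alters $W_L$ but leaves $\partial W_L$ intact since the extra summand is closed. Thus each Kirby move preserves the resulting oriented $3$-manifold. The \emph{only if} direction is where the genuine difficulty lies: given framed links $L_0, L_1$ with $\partial W_{L_0} \cong \partial W_{L_1} \cong M$, I would assemble from the two surgery descriptions two relative handle decompositions of a single smooth cobordism built out of the traces, and compare them.

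This comparison is the main obstacle, and it is precisely where Cerf theory enters. I would connect the two Morse functions realizing the two handle structures by a generic one-parameter family and follow the singularities of the resulting Cerf graphic: crossings of critical values produce handle slides, while births and deaths of cancelling handle pairs produce handle creations, cancellations, and blow-ups. The technically delicate point --- and the crux of the whole theorem --- is to show that the auxiliary $1$- and $3$-handles forced to appear in these families can always be cancelled against $2$-handles, so that the net transformation visible on the framed link of $2$-handles is a composition of handle slides and $\pm 1$ blow-ups, i.e. a sequence of Kirby moves. Once this is controlled, the remaining passage between $4$-dimensional handle attachments and $3$-dimensional Dehn surgery is the comparatively routine dictionary already used in the \emph{if} direction.
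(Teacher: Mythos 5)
The paper does not prove this statement at all: it is quoted as a classical background result (Lickorish--Wallace for existence, Kirby for the move calculus) and the reader is referred to the literature, so there is no in-paper argument to compare yours against. Your outline is the standard one and is structurally correct: existence via a Heegaard splitting, factoring the gluing map into Dehn twists using Lickorish's generation theorem, and converting each twist into a $\pm 1$-framed surgery component via the push-off lemma; and the ``if'' direction of Kirby's theorem via the $4$-dimensional trace $W_L$, where handle slides preserve $W_L$ and blow-ups change it only by a closed summand, hence fix $\partial W_L$.

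The one place where your text is a plan rather than a proof is exactly the place you flag yourself: the ``only if'' direction. Connecting the two handle decompositions by a generic path of Morse functions and reading off slides and births/deaths from the Cerf graphic is the right strategy, but the assertion that the auxiliary $1$- and $3$-handles appearing along the path can always be cancelled against $2$-handles --- so that the whole modification is visible as a sequence of moves on the framed link --- is the substantive content of Kirby's theorem, and it is not something that follows from the general position statements you invoke; it requires the detailed analysis in Kirby's original paper (or the later treatments of Fenn--Rourke and Gompf--Stipsicz). As a self-contained proof your proposal is therefore incomplete at its crux, though as a roadmap it correctly identifies every ingredient and correctly locates the difficulty; given that the paper itself treats the statement as citable background, this level of detail is arguably all that could reasonably be asked.
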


We don't give the definitions of Dehn surgery nor Kirby moves, see \cite{BHMV} for instance. We want to emphasize that the $\RT$ functor gives a quantum invariant of framed link that has been generalized in \cite{RT2} to $3$-manifold invariants, applying the above theorem. We call such invariants of manifolds {\em quantum invariants} by extension. In \cite{RT2} the initial category of quantum groups modules is semi-simple. 

In \cite{BHMV}, the authors present a {\em universal construction} of TQFT. Namely they suggest a technique to get a TQFT from a family of quantum invariants of three manifolds, using a natural pairing. The technique works with the Reshetikhin-Turaev invariants from \cite{RT2} and gave rise to the semi-simple Reshetikhin-Turaev TQFT's. In \cite{CGP2} the authors succeed in constructing a quantum invariant from a non semi-simple category of quantum groups modules. An adaptation of the universal construction is performed in \cite{BCGP2} and provides  a {\em non semi-simple TQFT}.

\begin{theorem}[{\cite[Theorem~1.1]{BCGP2}}]\label{BCGPfunctor}
There exists a monoidal functor $\BV : \mathcal{C}ob \to \mathcal{G}r\mathcal{V}ect$ from the category of decorated surfaces and decorated cobordisms to the category of finite dimensional $\BZ$-graded vector spaces. This functor is built from the non semi-simple category $\cat$ of $\Ubar$ weight modules (defined in what follows, Section \ref{sectionm(04)faithful}). The category $\cat$ is used to decorate the cobordisms, see the definition of decorations in \cite[Subsection~3.3]{BCGP2}.

Moreover the mapping class group representations (Remark \ref{mcgTQFTrep}) preserve the grading.
\end{theorem}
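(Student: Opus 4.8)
The plan is to apply the \emph{universal construction} of \cite{BHMV}, fed not by a semisimple invariant but by the renormalized quantum invariant of closed $3$-manifolds built in \cite{CGP2} from the ribbon category $\cat$. There are three stages: first promote the Reshetikhin--Turaev pairing on $\cat$ to a numerical invariant of decorated closed $3$-manifolds; then run the universal construction to attach a vector space to each decorated surface; finally verify the TQFT axioms, finite-dimensionality and the grading, and deduce the statement about mapping class groups.

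For the first stage, the obstruction is that $\cat$ is not semisimple and the ordinary quantum dimensions of its objects vanish, so the naive Reshetikhin--Turaev closure (Theorem \ref{RTfunctor}) of a colored surgery link evaluates to $0$. I would instead use the \emph{modified trace} on the ideal of projective objects of $\cat$, which is nondegenerate there. Presenting a closed decorated $3$-manifold $M$ by surgery on a framed link $L \subset S^3$ (Theorem \ref{lickorishwallace}), the $\BC/2\BZ$-cohomology decoration fixes the colors of the components of $L$; one requires the presentation to be \emph{admissible}, meaning at least one component carries a generic projective color, so that the modified trace of the resulting $\cat$-colored tangle is defined. Normalizing by the modified traces of the $\pm 1$-framed colored unknots and by the signature of the linking matrix yields a scalar $\BV(M)$, and the key step is to check its invariance under the Kirby moves of Theorem \ref{lickorishwallace}; here the semisimple argument must be replaced by the handle-slide property of the modified trace together with a direct blow-up computation.

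For the second stage, to a decorated surface $\Sigma$ I associate the free $\BC$-vector space $\widetilde{\mathcal V}(\Sigma)$ on decorated cobordisms from $\emptyset$ to $\Sigma$, with the pairing $\langle M_1 , M_2 \rangle = \BV \left( M_1 \cup_\Sigma \overline{M_2} \right)$ obtained by gluing along $\Sigma$ into a closed decorated $3$-manifold, and set $\BV(\Sigma) = \widetilde{\mathcal V}(\Sigma) / \ker \langle \,, \rangle$. A cobordism $N$ from $\Sigma_1$ to $\Sigma_2$ acts by $M \mapsto M \cup_{\Sigma_1} N$; this descends to the quotient precisely because one has divided by the radical, making $\BV$ a functor. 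Monoidality and the unit axiom $\BV(\emptyset) = \BC$ follow from multiplicativity of the invariant under disjoint union, and duality $\BV(-\Sigma) = \BV(\Sigma)^*$ from the fact that, after quotienting by the radical, the gluing pairing becomes a perfect pairing between $\BV(\Sigma)$ and $\BV(-\Sigma)$. Finite-dimensionality follows from a spanning argument: filling $\Sigma$ with a handlebody and inserting colored ribbon graphs built from a finite generating family of objects of $\cat$ produces finitely many vectors spanning $\BV(\Sigma)$.

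For the grading, the unrolled Cartan generator $H$ of $\Ubar$ endows every weight module, hence every $\Hom$-space of $\cat$, with a weight decomposition; tracking it through the surgery presentation refines $\BV(M)$ into a sum indexed by a $\BZ$-grading determined by the cohomological decoration, and the gluing pairing respects this grading, so each $\BV(\Sigma)$ is $\BZ$-graded and $\BV$ lands in $\GrVect$. Finally, for a diffeomorphism $\Phi$ of $\Sigma$ the mapping cylinder $I_\Phi$ of Remark \ref{mcgTQFTrep} is a graded cobordism, carrying the decoration transported by $\Phi$, so the induced endomorphism $\BV(\Phi)$ preserves the grading. I expect the main obstacle to be the first stage: establishing Kirby-move invariance of the normalized modified-trace evaluation despite the vanishing quantum dimensions, which forces careful control of the admissibility condition and of the behavior of the modified trace under handle slides and blow-ups.
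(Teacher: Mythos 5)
This statement is not proved in the paper: it is Theorem~1.1 of \cite{BCGP2}, quoted as an external input, and the only in-text indication of its proof is the one-paragraph sketch of Remark~\ref{sketchofBHMV} (the universal construction applied to the invariant $N_r^0$ of \cite{CGP2}). Your outline is consistent with that sketch and with the actual strategy of \cite{BCGP2,CGP2}: renormalize via the modified trace on projectives, impose admissibility on surgery presentations, prove Kirby invariance, then run the \cite{BHMV} pairing-and-quotient construction. Two small cautions if you were to expand this into a real proof: monoidality and the duality axiom are \emph{not} automatic outputs of the universal construction (surjectivity of $\BV(\Sigma_1)\otimes\BV(\Sigma_2)\to\BV(\Sigma_1\sqcup\Sigma_2)$ and perfectness of the gluing pairing are theorems, essentially equivalent to the finite-dimensionality/spanning argument you defer to the end), and the $\BZ$-grading in \cite{BCGP2} is defined combinatorially from the colors and the cohomology class decorating the surface (as in the paper's ``$0$-graded'' condition $\sum_i\lambda_i=0$) rather than by tracking $H$-weights through a surgery presentation.
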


The non semi-simplicity of $\cat$ implies richer topological information than in the case of the classical Reshetikhin-Turaev TQFTs (semi simple). For instance, the following theorem about mapping class group representations is a strong improvement compared to the original Reshetikhin-Turaev TQFT's.

\begin{theorem}[{\cite[Theorem~1.3]{BCGP2}}]
The action of a Dehn twist along a non-separating curve of a surface $\Sigma$ has infinite order on $\BV\left( \Sigma  \right)$. 
\end{theorem}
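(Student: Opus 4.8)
The plan is to reduce the claim to the computation of a single eigenvalue of the twist operator and to exhibit that eigenvalue as a complex number that is not a root of unity; write $\tau_c$ for the Dehn twist along the non-separating curve $c$. I would first remove the dependence on the particular curve. By the change of coordinates principle, any two non-separating simple closed curves on $\Sigma$ are carried one onto the other by an orientation-preserving homeomorphism $\Phi$; since $\tau_{\Phi(c)} = \Phi\,\tau_c\,\Phi^{-1}$ and $\BV$ is functorial (Remark~\ref{mcgTQFTrep}), the operators $\BV(\tau_{\Phi(c)})$ and $\BV(\tau_c)$ are conjugate. Hence the order of $\BV(\tau_c)$ does not depend on the chosen non-separating curve, and I am free to take $c$ to be one curve of a convenient pants decomposition of $\Sigma$, so that the computation localises on a standard genus-one handle.

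Next I would use the gluing axiom to make the action of $\tau_c$ visible. Cutting $\Sigma$ along the non-separating curve $c$ produces a connected surface $\Sigma'$ with two extra boundary circles; the gluing formula expresses $\BV(\Sigma)$ as a sum of blocks indexed by the object $V$ of $\cat$ colouring $c$, the two new boundary components carrying $V$ and its dual $V^*$. The essential point is that, because $c$ is non-separating, no admissibility condition links the colour of $c$ to the remaining decoration, so $V$ may be chosen to be any typical simple weight module of $\cat$. On the block labelled by $V$ the mapping cylinder of $\tau_c$ inserts, in the Reshetikhin--Turaev formalism, a full positive twist on the $V$-coloured strand, so $\tau_c$ acts there by the ribbon twist scalar $\theta_V$, up to a block-unipotent correction coming from the non semi-simplicity of $\cat$ which does not alter the eigenvalue.

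I would then compute $\theta_V$ from the ribbon structure of $\Ubar$. For a typical simple module $V = V_\alpha$ with continuous highest weight $\alpha \in \C$ one has $\theta_{V_\alpha} = q^{f(\alpha)}$, where $q = e^{i\pi/r}$ is the fixed root of unity and $f$ is a non-constant (quadratic) polynomial; thus $\theta_{V_\alpha} = e^{i\pi f(\alpha)/r}$ varies continuously and non-trivially with $\alpha$. Since $\alpha$ ranges over a continuum of admissible weights while the equation $f(\alpha)/r \in \Q$ cuts out only a countable subset, I may pick $\alpha$ with $f(\alpha)/r \notin \Q$, so that $\theta_{V_\alpha}$ is not a root of unity. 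Consequently $\theta_{V_\alpha}$ is an eigenvalue of $\BV(\tau_c)$ and $\theta_{V_\alpha}^{N} \neq 1$ for every $N \geq 1$; the eigenvalues of $\BV(\tau_c)^N$ being the $N$-th powers of those of $\BV(\tau_c)$, this forces $\BV(\tau_c)^{N} \neq \Id$ for all $N \geq 1$, i.e. $\BV(\tau_c)$ has infinite order.

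The step I expect to be the main obstacle is the gluing decomposition of the second paragraph: within the universal construction of \cite{BCGP2}, where $\BV(\Sigma)$ is realised as a quotient of a skein-type space by the radical of a natural pairing, one must check that the block carrying the generic colour $V_\alpha$ is not killed in passing to the quotient, so that it contributes a non-zero vector, and that the ribbon scalar $\theta_{V_\alpha}$ genuinely survives as an eigenvalue of $\BV(\tau_c)$ rather than being absorbed into the radical. The reduction and the twist computation are, by contrast, routine once this non-degeneracy is secured.
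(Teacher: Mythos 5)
First, a point about scope: this statement is not proved in the paper at all --- it is quoted verbatim from \cite{BCGP2} (their Theorem~1.3) as background motivation, so there is no ``paper's own proof'' to compare against; your proposal has to be judged against the argument in \cite{BCGP2} itself.

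Measured that way, your outline has the right general shape (cut along $c$, decompose by the colour running through $c$, read off the ribbon twist), but it has a genuine gap at exactly the point you flag as delicate, plus two others. (1) The colour $V$ on $c$ is \emph{not} free: $\Sigma$ is a \emph{decorated} surface, and the decoration (in particular the cohomology class with values in $\C/2\Z$) fixes the admissible colours on a non-separating curve to a discrete coset $\alpha+2\Z$ with $\alpha$ determined by the decoration. You therefore cannot ``pick $\alpha$ with $f(\alpha)/r\notin\Q$''; your argument only proves the theorem for suitably generic decorations, whereas the statement is for the given $\Sigma$. (2) In the degenerate cases ($\alpha$ integral or rational) the twist eigenvalues can all be roots of unity, and the infinite order comes precisely from the ``block-unipotent correction'' you dismiss: the ribbon element acts non-diagonalizably on the projective indecomposable modules, so $\BV(\tau_c)$ acquires a nontrivial Jordan block --- this non-semi-simple phenomenon is the heart of the BCGP argument, not a removable nuisance. (3) The mapping class group action here is only projective, so exhibiting a single eigenvalue that is not a root of unity does not give infinite order in $\PGL$; you need either a \emph{ratio} of two eigenvalues that is not a root of unity, or a nontrivial unipotent part, which again points back to (2). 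Fixing the proof means either restricting the claim to generic decorations, or confronting the Jordan-block structure of the twist on projectives head-on.
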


The latter suggests that the new family of representations of mapping class groups provided by the non semi-simple TQFTs is richer and one would be interested in the question of their faithfulness. We investigate a precise case in the following section. We end these recalls by a far from exhaustive presentation of the universal construction providing non semi-simple TQFTs from $\RT$-functor.

\begin{rmk}[One ingredient of the universal construction]\label{sketchofBHMV}
We present loosely the universal construction introduced in \cite{BHMV} and performed in a more sophisticated way in \cite{BCGP2} giving rise to non semi-simple TQFTs. 

Let $\Sigma$ be a surface, $V_1(\Sigma)$ be the complex vector space generated by all cobordisms between $\emptyset$ and $\Sigma$ and $\widehat{V_1}$ the one generated by all cobordisms between $\Sigma$ and $\emptyset$. There is a pairing:
\[
\bapp
V_1(\Sigma) \times \widehat{V_1}(\Sigma) & \to & \BC \\
(C,C') & \mapsto & N_r^0(C \sharp_{\Sigma} C')
\eapp
\]
where $C \sharp_{\Sigma} C'$ is the closed $3$-dim manifold obtained by gluing $C$ and $C'$ along their common boundary, and $N_r^0$ is the quantum invariant of closed manifold constructed in \cite{CGP2}. By making the quotient of $V_1$ by the kernel of this pairing, one obtains the TQFT module associated to $\Sigma$. To deal with details of the pairing and of the quotient in the case of decorated cobordisms, one should follow \cite{BCGP2}. In what follows we will refer to \cite{BCGP2} notations to fit with the $\cat$-decorated formalism. 
\end{rmk}

\subsection{Content of the paper}

In Section \ref{HyperellipticInvol} we present a point of view for $\Mcg(0,4)$ using the mapping class group of the torus known to be isomorphic to $\PSL(2,\BZ)$. We recall basics about this last group and then the construction relating the torus and the punctured sphere: the {\em hyperelliptic involution}. In Section \ref{sectionm(04)faithful} we construct explicitly the non semi-simple TQFT representation of $\Mcg(0,4)$ using a trivalent graph basis. Then, comparing this representation with the one given by the hyperelliptic involution we show the faithfulness of the TQFT representation. In the last Section \ref{TQFTbiggerspheres} we give a quick overview for sphere with more punctures, by relating their TQFT representations with quantum representations of braid groups. 
%

{\bf Acknowledgment} This work was achieved during the PhD of the author that was held in the {\em Institut de Mathématiques de Toulouse}, in {\em Université Paul Sabatier, Toulouse 3}. The author thanks very much his advisor Francesco Costantino for suggesting this problem, and for fruitful remarks that led to this paper.  

\section{Hyperelliptic involution and $\PSL(2,\BZ)$}\label{HyperellipticInvol}

In this section we present the {\em hyperelliptic involution} of the torus that will be used to study the mapping class group $\Mcg(0,4)$ using that of the torus, namely $\PSL(2,\BZ)$. First we recall some property of $\PSL(2,\BZ)$. 

\subsection{Recalls on representations of $\PSL(2,\BZ)$}
We consider the three following presentations of groups:
\[
G_1=\left\langle a,b \text{ } | \text{ } aba=bab \text{ , } (aba)^4=1 \right\rangle
\]
\[
G_2=\left\langle s,t \text{ } | \text{ } s^2=t^3 \text{ , } t^4=1 \right\rangle
\]
\[
H=\left\langle s,t \text{ } | \text{ } s^2=t^3=1 \right\rangle
\]
and let $f$ be the following morphism:
\[
f: \left\lbrace \begin{array}{rcl}
G_1 & \to & \SL(2,\BZ)\\
a & \mapsto & A \\
b & \mapsto & B
\end{array} \right.
\]
with:
\[
A= \begin{pmatrix} 1 & 1 \\ 0 & 1 \end{pmatrix} \text{ , } B = \begin{pmatrix} 1 & 0 \\ -1 & 1 \end{pmatrix}.
\]
Then $f$ is a homomorphism.
\begin{fact}
\begin{itemize}
\item The groups $G_1$ and $G_2$ are isomorphic, up to the following inverse subsitutions:
\[
s=ab \text { , } t=aba \text{ and } a=s^{-1}t \text{ , } b=t^{-1}s^2
\]
we call it $G$ from now on.
Moreover the group $G$ is isomorphic to the quotient of the braid group $\B_3$ by the central subgroup generated by $(\sigma_1 \sigma_2 \sigma_1)^4$.
\item The group $H$ is isomorphic to the quotient of $G$ by the group generated by $s^3=t^2$, so that it is isomorphic to the quotient of $\B_3$ by its central subgroup $Z(\B_3)$.
\end{itemize}
\end{fact}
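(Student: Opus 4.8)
The plan is to establish both isomorphisms through von Dyck's theorem (the universal property of a presentation): to identify two presented groups it suffices to produce homomorphisms in both directions and check that the two composites fix the generators. Concretely I would set up the maps coming from the displayed substitutions, namely $\psi\colon G_2\to G_1$ determined by $s\mapsto ab$, $t\mapsto aba$, and $\phi\colon G_1\to G_2$ determined by $a\mapsto s^{-1}t$, $b\mapsto t^{-1}s^2$, and then prove they are mutually inverse.

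The first substantive step is well-definedness, i.e.\ that the defining relators of the source group map to the identity of the target. For $\phi$ one computes $\phi(aba)=(s^{-1}t)(t^{-1}s^2)(s^{-1}t)=t$ and $\phi(bab)=(t^{-1}s^2)(s^{-1}t)(t^{-1}s^2)=t^{-1}s^3$, so the braid relation $aba=bab$ maps exactly to the defining relation of $G_2$ identifying $t^2$ with $s^3$, while $(aba)^4\mapsto t^4=1$ is the second relator of $G_2$; hence $\phi$ descends. For $\psi$ I would symmetrically check the relators of $G_2$ in $G_1$: the relation between the generators becomes $(ab)^3=(aba)^2$, which holds because $(aba)(bab)=(ab)^3$ together with $aba=bab$ gives $(aba)^2=(ab)^3$, and the order relation becomes the relator $(aba)^4=1$ of $G_1$. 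Finally the composites act as the identity on generators by the short cancellations $\phi\psi(s)=\phi(ab)=s$, $\phi\psi(t)=\phi(aba)=t$, $\psi\phi(a)=(ab)^{-1}(aba)=a$ and $\psi\phi(b)=(aba)^{-1}(ab)^2=b$; this proves $G_1\cong G_2=:G$.

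For the braid-theoretic descriptions I would begin from $\B_3=\langle\sigma_1,\sigma_2\mid\sigma_1\sigma_2\sigma_1=\sigma_2\sigma_1\sigma_2\rangle$ and invoke the classical fact that $z:=(\sigma_1\sigma_2\sigma_1)^2=(\sigma_1\sigma_2)^3$ generates the centre $Z(\B_3)$. Since $z$ is central, both $\langle z^2\rangle=\langle(\sigma_1\sigma_2\sigma_1)^4\rangle$ and $\langle z\rangle=Z(\B_3)$ are automatically normal, so the quotients inherit the evident presentations: adjoining $(\sigma_1\sigma_2\sigma_1)^4=1$ to $\B_3$ and setting $a=\sigma_1$, $b=\sigma_2$ yields exactly $G_1$, whence $G\cong\B_3/\langle(\sigma_1\sigma_2\sigma_1)^4\rangle$. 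Killing the remaining order-two central class $z$ (the subgroup ``generated by $s^3=t^2$'') forces $s^3=t^2=1$ and produces $H$; as $\langle z\rangle=Z(\B_3)$ this identifies $H$ with $\B_3/Z(\B_3)$, and recognising the resulting presentation as the free product $\BZ/2\ast\BZ/3\cong\PSL(2,\BZ)$ completes the argument.

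The main obstacle is concentrated entirely in the well-definedness verifications: everything reduces to the single central identity $(\sigma_1\sigma_2)^3=(\sigma_1\sigma_2\sigma_1)^2$ in $\B_3$, and once it and the centrality of $z$ are in hand the normality of the quotient subgroups, and hence the quotient presentations, come for free. The one further standard input that must be cited rather than recomputed is that $z$ generates the \emph{entire} centre of $\B_3$, since this is precisely what guarantees $\B_3/\langle z\rangle\cong\PSL(2,\BZ)$ rather than some proper quotient of it.
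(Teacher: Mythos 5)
The paper offers no proof of this Fact at all --- it is stated as a recall of standard properties of $\B_3$ and $\PSL(2,\BZ)$ --- so there is no argument of the author's to compare against; I can only assess your proposal on its own terms. Your argument is correct and is the standard one: von Dyck in both directions, the two short cancellations showing the composites fix generators, the identity $(\sigma_1\sigma_2)^3=(\sigma_1\sigma_2\sigma_1)^2$, and the classical fact that this element generates $Z(\B_3)$ (which, as you rightly insist, must be cited and is exactly what makes the second bullet an identification with $\B_3/Z(\B_3)$ rather than with a quotient of it). All the computations check out: $\phi(aba)=t$, $\phi(bab)=t^{-1}s^3$, $(aba)^2=(aba)(bab)=(ab)^3$, and the four composite evaluations.

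One point you should make explicit rather than pass over silently: your verification actually shows that the relation of $G_2$ \emph{as printed in the paper}, namely $s^2=t^3$, is incompatible with the substitutions $s=ab$, $t=aba$. Your computation of $\phi(bab)$ forces the relation $t^2=s^3$ (equivalently, your check that $(ab)^3=(aba)^2$ in $G_1$ verifies $s^3=t^2$, not $s^2=t^3$; with the literal relation $s^2=t^3$ the map $\psi$ is not well defined, as one sees already in $\SL(2,\BZ)$ where $(AB)^2\neq(ABA)^3$). The same swap propagates to $H$: under these substitutions the quotient presentation is $\langle s,t\mid s^3=t^2=1\rangle$, consistent with $AB$ having order $3$ and $ABA$ order $2$ in $\PSL(2,\BZ)$, whereas the paper prints $s^2=t^3=1$. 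Abstractly $\BZ/2\ast\BZ/3\cong\BZ/3\ast\BZ/2$, so the Fact survives, but a clean write-up should state that the presentations are being read with $s^3=t^2$ (and $s^3=t^2=1$), i.e.\ that you are correcting a transposition in the source, since otherwise the well-definedness step of your proof literally fails against the displayed relators.
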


We introduce here the following matrices, in order to relate the different presentations to the matrix representation $f$:
\[
S=f(s)=AB= \begin{pmatrix} 0 & 1 \\ -1 & 1 \end{pmatrix} \text{ , } T=f(t)=ABA=\begin{pmatrix} 0 & 1 \\ -1 & 0 \end{pmatrix} .
\]
We get $T^2 = - I_2$, so that $f$ provides a morphism $\bar{f}: H \to \PSL(2,\BZ)$.
\begin{prop}
The morphisms $f: G \to \SL(2,\BZ)$ and $\bar{f}= H \to \PSL(2,\BZ)$ are isomorphisms.
\end{prop}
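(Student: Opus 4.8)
The plan is to deduce the proposition from the classical presentation of the modular group, and then to obtain the $\PSL$ statement by quotienting by the center on both sides. As a preliminary I would record surjectivity of $f$, which is classical: the matrices $A$ and $B$ generate $\SL(2,\BZ)$ (apply the Euclidean algorithm to the columns of an integral matrix of determinant one), so $f$ is onto; equivalently $S=f(s)=AB$ and $T=f(t)=ABA$ generate. The content of the proposition therefore lies in injectivity.

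For injectivity I would invoke the well-known description of the modular group as an amalgamated free product, $\SL(2,\BZ)\cong \BZ/4\BZ *_{\BZ/2\BZ}\BZ/6\BZ$, where $\langle T\rangle\cong\BZ/4\BZ$ and $\langle S\rangle\cong\BZ/6\BZ$ are amalgamated along their common central subgroup $\langle -I_2\rangle$ (see, e.g., \cite{F-M} for the modular group and its action on the Farey tessellation). Unwinding this amalgam yields precisely a presentation on the generators $S,T$ matching the defining relations of $G$ presented as $G_2$: one has $T^{4}=I_{2}$ together with $S^{3}=T^{2}=-I_{2}$ (the equality $T^{2}=-I_{2}$ was noted above, and $S^{3}=-I_{2}$ is a one-line computation). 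Since $f$ sends the generators $s,t$ of $G$ to $S,T$ and respects exactly these relations, and since $\SL(2,\BZ)$ is presented by them, the universal property of a group presentation identifies $f$ as the isomorphism between two groups given by the same presentation. The only non-formal ingredient here is the presentation of $\SL(2,\BZ)$ itself --- whose proof goes through the action on the Farey graph or Bass--Serre theory --- and this is the main obstacle; everything else is bookkeeping.

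Finally, to handle $\bar f$, I would pass to the quotient by the center. The center of $\SL(2,\BZ)$ is $\{\pm I_2\}=\ker\!\big(\SL(2,\BZ)\to\PSL(2,\BZ)\big)$, and under the isomorphism $f$ it corresponds to the central subgroup of $G$ of order two generated by the element with $f$-image $-I_2$ (namely $s^{3}=t^{2}$). By the preceding Fact this subgroup is exactly the one whose quotient yields $H$. Hence $f$ carries this order-two central subgroup isomorphically onto $\{\pm I_2\}$ and descends to the induced map $\bar f\colon H\to\PSL(2,\BZ)$, which is bijective because $f$ is. Therefore both $f$ and $\bar f$ are isomorphisms.
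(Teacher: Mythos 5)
Your proof is correct, but note that the paper itself offers no proof of this proposition at all: it is stated as a classical recollection about presentations of the modular group, so there is nothing to compare your argument against except the standard literature. Your route --- surjectivity via the Euclidean algorithm, injectivity via the presentation of $\SL(2,\BZ)$ coming from the amalgam $\BZ/4\BZ *_{\BZ/2\BZ} \BZ/6\BZ$, then passing to $\PSL(2,\BZ)$ by quotienting the order-two center --- is exactly the standard proof one would cite, and all the computations you rely on ($T^2=S^3=-I_2$, $f(\langle s^3\rangle)=\{\pm I_2\}$, redundancy of $s^6=1$) check out. One point worth making explicit: your argument silently corrects the exponents in the displayed presentations. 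As printed, $G_2=\langle s,t \mid s^2=t^3,\ t^4=1\rangle$ forces $s^2=t^{-1}$ and hence makes $G_2$ cyclic, and $H=\langle s,t\mid s^2=t^3=1\rangle$ assigns the orders $2$ and $3$ to the wrong generators given that $f(s)=S$ has order $6$ and $f(t)=T$ has order $4$; the relations consistent with $f$ and with the substitution $s=ab$, $t=aba$ (where $(aba)^2=(ab)^3$ in $\B_3$) are $s^3=t^2$, $t^4=1$ for $G_2$ and $s^3=t^2=1$ for $H$, which is what the subsequent Fact (quotient by $\langle s^3=t^2\rangle$) also presupposes. Your proof is a proof of the intended statement, not of the statement as literally typeset.
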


\subsection{Hyperelliptic involution and punctured spheres}

For the general case of the sphere with $n$ punctures, there is the following theorem, giving a general presentation of the mapping class group.

\begin{theorem}\label{presentationSphere}
If $n\ge 2$, then $\Mcg(0,n)$ admits a presentation with generators $\sigma_1, \ldots, \sigma_{n-1}$ together with the following defining relations:
\[
\begin{array}{rclr}
\sigma_i \sigma_j & = & \sigma_j \sigma_i , & |i-j| \ge 2 \\
\sigma_i \sigma_{i+1} \sigma_i & = & \sigma_{i+1} \sigma_i \sigma_{i+1} &\\
(\sigma_1 \sigma_2 \cdots \sigma_{n-1})^n & = & 1 &\\
\sigma_1 \cdots \sigma_{n-2} \sigma_{n-1}^2 \sigma_{n-2} \cdots \sigma_1 & = & 1 &
\end{array}
\]
\end{theorem}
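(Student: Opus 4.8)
The plan is to realize $\Mcg(0,n)$ as an iterated quotient of the braid group $\Bn$, where each of the two missing relations comes from one geometric operation. Recall that $\Bn=\Mod(D_n)$ is the mapping class group of the $n$-punctured disk, and by Artin's theorem it has the presentation with generators $\sigma_1,\dots,\sigma_{n-1}$ subject to the commutation and braid relations (the first two defining relations in the statement). Hence only the third relation $(\sigma_1\cdots\sigma_{n-1})^n=1$ and the fourth relation $\sigma_1\cdots\sigma_{n-2}\sigma_{n-1}^2\sigma_{n-2}\cdots\sigma_1=1$ must be produced. I would realize the $n$-punctured sphere by capping the boundary circle of $D_n$ with an unpunctured disk, and analyse the effect of this capping in two stages.

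First I would pass from the disk to the sphere at the level of surface braid groups. The inclusion $D_n\hookrightarrow S^2$ induces a homomorphism $\Bn=\Bn(D^2)\to\Bn(S^2)$, where $\Bn(S^2)=\pi_1(\mathrm{UConf}_n(S^2))$ is the spherical braid group. Using the Fadell--Neuwirth fibrations for configuration spaces of $S^2$ one checks that this map is surjective with kernel normally generated by the single \emph{surface relation} $\sigma_1\cdots\sigma_{n-2}\sigma_{n-1}^2\sigma_{n-2}\cdots\sigma_1$; this is the Fadell--Van Buskirk presentation of $\Bn(S^2)$, and it is exactly the fourth defining relation. Geometrically this relation records that a strand may be dragged around the back hemisphere of $S^2$ and returned, a move unavailable inside the disk.

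Next I would pass from $\Bn(S^2)$ to $\Mcg(0,n)$ using the evaluation fibration $\Homeop(S^2)\to\mathrm{UConf}_n(S^2)$ sending $f$ to $f(\{p_1,\dots,p_n\})$, whose fibre is the group of orientation-preserving homeomorphisms preserving the marked set. Its long exact sequence in homotopy contains
\[
\pi_1(\Homeop(S^2))\to \Bn(S^2)\to \Mcg(0,n)\to \pi_0(\Homeop(S^2)).
\]
By Smale's theorem $\Homeop(S^2)\simeq SO(3)$, so $\pi_0(\Homeop(S^2))=1$, giving the surjection onto $\Mcg(0,n)$, while $\pi_1(\Homeop(S^2))=\pi_1(SO(3))=\BZ/2\BZ$. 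Exactness then identifies $\Mcg(0,n)$ with the quotient of $\Bn(S^2)$ by the image of this $\BZ/2\BZ$. The key geometric input is that the generator of $\pi_1(SO(3))$ — the full $2\pi$ rotation of $S^2$ — maps to the full twist $(\sigma_1\cdots\sigma_{n-1})^n$, which for $n\ge 3$ has order exactly two in $\Bn(S^2)$ and generates its centre. Granting this, the quotient imposes precisely the third defining relation $(\sigma_1\cdots\sigma_{n-1})^n=1$; for $n=2$ the full twist is already trivial, consistently with $\Mcg(0,2)=\BZ/2\BZ$.

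Assembling the two stages exhibits $\Mcg(0,n)$ as $\Bn$ modulo the normal closure of the third and fourth relations, which is the asserted presentation. I expect the main obstacle to be the identification just described: proving that the $\pi_1(SO(3))$-generator is the full twist and has order two in $\Bn(S^2)$, equivalently that these two relations form a complete list of new relations with nothing hidden. This is the genuinely non-formal point; it rests on the explicit configuration-space computations of Fadell--Neuwirth and Gillette--Van Buskirk together with Smale's theorem, which I would cite (compare \cite{F-M}) rather than reprove. Throughout I would use the small cases as a check: $n=2$ returns $\BZ/2\BZ$ and $n=3$ returns $\fS_3$, matching the examples recorded above.
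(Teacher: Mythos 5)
The paper offers no proof of this statement: it is quoted as a classical presentation (it is Theorem 4.5 of Birman's book \cite{Bir}, also treated in \cite{F-M}), so there is no in-paper argument to compare against. Your proposal is the standard and correct derivation of that presentation, and the two-stage structure is exactly right: Artin's presentation of $\Bn$ accounts for the first two families of relations, the Fadell--Van Buskirk presentation of $\Bn(S^2)$ adds the surface relation $\sigma_1\cdots\sigma_{n-2}\sigma_{n-1}^2\sigma_{n-2}\cdots\sigma_1=1$, and the evaluation fibration together with $\Homeop(S^2)\simeq SO(3)$ adds $(\sigma_1\cdots\sigma_{n-1})^n=1$. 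Two small points of care, both of which you essentially flag: first, the long exact sequence only tells you that the kernel of $\Bn(S^2)\to\Mcg(0,n)$ is the \emph{image} of $\pi_1(SO(3))$, i.e.\ the cyclic subgroup generated by the full twist, so to read off a presentation you should note that this subgroup is automatically normal because the full twist is central in $\Bn(S^2)$; second, the fact that the full twist has order exactly two (Gillette--Van Buskirk) is not actually needed to obtain the presentation --- imposing $(\sigma_1\cdots\sigma_{n-1})^n=1$ gives the correct quotient whether or not that element was already trivial --- it is only needed if you want to identify the kernel as $\BZ/2\BZ$ on the nose. With the cited inputs (Fadell--Neuwirth/Van Buskirk and Smale--Kneser) taken as known, the argument is complete, and your sanity checks at $n=2$ and $n=3$ agree with the examples recorded in the paper.
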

In the above theorem, $\sigma_i$ corresponds to the half Dehn twist along some arc relating $p_i$ and $p_{i+1}$, for $i=1, \ldots, n-1$. We will sometimes refer to these generators as {\em braid generators}, as they satisfy the {\em braid relations}. We define the half Dehn twists involved in the above theorem.

\begin{defn}[Half Dehn twist]\label{halfdehntwist}
Let $\alpha$ be an arc in a surface $M$ having endpoints in a subset $Q \subset M$. By half Dehn twist along $\alpha$ we mean the homeomorphism:
\[
\tau_{\alpha}: (M,Q) \to (M,Q)
\]
which is obtained as the result of the isotopy of the identity map $\Id : M \to M$ rotating $\alpha$ in $M$ about its midpoint by the angle $\pi$ in the direction provided by the orientation of $M$. The half-twist $\tau_{\alpha}$ is the identity outside a small neighborhood of $\alpha$ in $M$.
\end{defn}

In the next example we present the hyperellyptic involution representation of the mapping class group of the sphere with four punctures. This will be used extensively to show the faithfulness of the non semi-simple TQFT representation later on. 

\begin{example}[Sphere with $4$ punctures]\label{4sphere}

This example is treated in \cite[Section 2.2.5]{F-M}, but we mainly follow \cite{AMU} which provides matrices for generators that will be useful to study the representations.

The idea is to study $\Mcg(0,4)$ from the torus. Think of the torus as the square with opposite faces identified, say that the left lower vertex is in $\bpm 0 \\ 0 \epm$. Let $\iota$ be the $\pi$-rotation fixing the square, and having four fixed points ($\bpm 0 \\ 0 \epm$,$\bpm 1/2 \\ 0 \epm$,$\bpm 0 \\ 1/2 \epm$,$\bpm 1/2 \\ 1/2 \epm$), we call it the {\em hyperelliptic involution}. The quotient of the torus by the action of $\iota$ is an orbifold that is topologically a sphere with $4$ ramified points. We will use this sphere with this four marked points allowing relations with the well known mapping class group of the torus.

Let $A \in SL(2,\BZ)$ be a matrix, $v \in \left( \frac{1}{2} \BZ \right)^2$ be a vector, and $\phi_{A,v}$ be the transformation of $\BR ^2$:
\[
x \mapsto Ax+v .
\]

This defines a diffeomorphism of $T^2 = \BR ^2 / \BZ ^2$ commuting with $\iota = -\Id$. We keep the notation $\phi_{A,v}$ to designate the diffeomorphism of the quotient $T^2 / \{\pm \Id \}$, the $4$-punctured sphere. By Theorem 3.1 of \cite{AMU}, this association is surjective. More precisely, all the braid generators of $\Mcg(0,4)$, (from Theorem \ref{presentationSphere}), are reached by diffeomorphisms of the form $\phi_{A,v}$. The study of the kernel is given by the exact sequence of \cite[Corollary 3.3]{AMU}:
\[
1 \to N \to \Mcg(0,4) \to PSL(2,\BZ) \to 1
\]

where a mapping class coming from some $\phi_{A,v}$ is sent to the matrix $\pm A \in PSL(2,\BZ)$, and $N=\BZ / 2 \BZ \times \BZ / 2 \BZ$.

Moreover, from \cite[Proposition 2.7]{F-M}, the sequence splits so that $\Mcg(0,4)$ is the semi-direct product $PSL(2,\BZ) \ltimes N$.

The latter is done in a more explicit manner in \cite{Bir} using the general presentation of the mapping class groups of punctured spheres. Namely, from Theorem \ref{presentationSphere}, we get the following presentation for $\Mcg(0,4)$. Let $\sigma_1,\sigma_2,\sigma_3$ be the three generators together with the following relations:
\begin{align}\label{relationM(0,4)}
\sigma_1 \sigma_3 & = \sigma_3 \sigma_1 \\
\sigma_1 \sigma_2 \sigma_1 & =  \sigma_2 \sigma_1 \sigma_2\\
\sigma_3 \sigma_2 \sigma_3 & =  \sigma_2 \sigma_3 \sigma_2\\
(\sigma_1 \sigma_2 \sigma_3)^4 & =  1\\
\sigma_1 \sigma_2 \sigma_3^2 \sigma_2 \sigma_1 & =  1
\end{align}

Let $G$ be the subgroup of $\Mcg(0,4)$ generated by $\sigma_1$ and $\sigma_2$, and let $N$ be the subgroup generated by $a= \sigma_1 \sigma_3^{-1}$ and $b= \sigma_2 \sigma_1 \sigma_3^{-1} \sigma_2^{-1}$.
\begin{lemma}[{\cite[Lemma 5.4.1]{Bir}}]\label{M04PSL2}
The group $M(0,4)$ is the semi direct product of the normal subgroup $N$ and the subgroup $G$. 
\end{lemma}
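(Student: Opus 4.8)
The plan is to exploit the exact sequence and its splitting recalled just above. Write $\pi\colon\Mcg(0,4)\to\PSL(2,\BZ)$ for the surjection $\phi_{A,v}\mapsto\pm A$, whose kernel $\ker\pi\cong\BZ/2\BZ\times\BZ/2\BZ$. As a kernel, $\ker\pi$ is automatically normal, so it suffices to establish two facts: \emph{(i)} that $N=\langle a,b\rangle$ equals $\ker\pi$, and \emph{(ii)} that the restriction $\pi|_G\colon G\to\PSL(2,\BZ)$ is an isomorphism. Granting these, $N$ is normal, $N\cap G=\ker(\pi|_G)=\{1\}$, and $NG=\Mcg(0,4)$ since $\pi(G)=\PSL(2,\BZ)$ and $\ker\pi=N$; this is precisely the assertion $\Mcg(0,4)=N\rtimes G$.

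For \emph{(i)}, I would first invoke the explicit matrices of \cite[Theorem~3.1]{AMU}: the half-twists $\sigma_1$ and $\sigma_3$ are realized by $\phi_{A,v_1}$ and $\phi_{A,v_3}$ with the \emph{same} linear part $A$, so $\pi(\sigma_1)=\pi(\sigma_3)$ and hence $\pi(a)=1$; as $b=\sigma_2 a\sigma_2^{-1}$, also $\pi(b)=1$, giving $N\subseteq\ker\pi$. To see that $a$ and $b$ exhaust $\ker\pi\cong(\BZ/2\BZ)^2$, I would compose with the puncture-permutation homomorphism $\Mcg(0,4)\to\fS_4$: one finds $a=\sigma_1\sigma_3^{-1}\mapsto(1\,2)(3\,4)$ and $b\mapsto(1\,3)(2\,4)$. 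These images are nontrivial and distinct, so $a$ and $b$ are two distinct nontrivial elements of $(\BZ/2\BZ)^2$, and any two such elements form a basis and hence generate the whole group; thus $N=\ker\pi$.

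For \emph{(ii)}, surjectivity is immediate: $\pi(\sigma_1),\pi(\sigma_2),\pi(\sigma_3)$ generate $\PSL(2,\BZ)$ and $\pi(\sigma_3)=\pi(\sigma_1)$, so $\pi(\sigma_1),\pi(\sigma_2)$ already do. For injectivity I would examine the canonical surjection $\phi\colon\B_3\twoheadrightarrow G$ sending the standard generators to $\sigma_1,\sigma_2$. Since $\pi(\sigma_1)$ and $\pi(\sigma_2)$ are the classes of the matrices $A$ and $B$ of the recalls, the composite $\pi\circ\phi$ is the classical surjection $\B_3\to\PSL(2,\BZ)$ whose kernel is the center $Z(\B_3)=\langle(\sigma_1\sigma_2\sigma_1)^2\rangle$; hence $\ker\phi\subseteq Z(\B_3)$. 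For the reverse inclusion I would eliminate $\sigma_3=a^{-1}\sigma_1$: the commutation $\sigma_1\sigma_3=\sigma_3\sigma_1$ says $a$ commutes with $\sigma_1$, so $\sigma_3^2=a^{-2}\sigma_1^2$, and feeding this together with $a^2=1$ (from \emph{(i)}) into the sphere relation $\sigma_1\sigma_2\sigma_3^2\sigma_2\sigma_1=1$ collapses it to $(\sigma_1\sigma_2\sigma_1)^2=1$. Thus $Z(\B_3)\subseteq\ker\phi$, so $\ker\phi=Z(\B_3)$ and $G\cong\B_3/Z(\B_3)\cong\PSL(2,\BZ)$, the induced isomorphism being exactly $\pi|_G$; in particular $G\cap\ker\pi=\{1\}$.

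The main obstacle is part \emph{(ii)}, and inside it the equality $\ker\phi=Z(\B_3)$: the inclusion $\ker\phi\subseteq Z(\B_3)$ is formal, whereas the reverse inclusion depends on the collapse of $\sigma_1\sigma_2\sigma_3^2\sigma_2\sigma_1=1$ after substituting $\sigma_3=a^{-1}\sigma_1$, which itself uses the order-two relation $a^2=1$ furnished by \emph{(i)}. I expect the only genuinely geometric inputs—everything else being formal group theory once the exact sequence is granted—to be the two facts imported from \cite{AMU}: that $\sigma_1$ and $\sigma_3$ share a linear part, and that $a,b$ project to independent involutions in $\fS_4$.
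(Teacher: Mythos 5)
Your argument is correct, but it is worth noting that the paper itself offers no proof of this lemma: it is quoted verbatim from Birman (\cite[Lemma 5.4.1]{Bir}), whose own derivation is purely presentation-theoretic, while the surrounding text of the paper separately recalls the geometric route (the exact sequence $1\to N\to \Mcg(0,4)\to\PSL(2,\BZ)\to 1$ of \cite{AMU} plus the splitting from \cite{F-M}). What you have done is thread these two sources together into a single self-contained argument: you import from \cite{AMU} the identification of the kernel with $(\BZ/2\BZ)^2$ and the fact that $\sigma_1,\sigma_3$ share a linear part, and you then run the presentation-based computation (substituting $\sigma_3=a^{-1}\sigma_1$ with $a^2=1$ into $\sigma_1\sigma_2\sigma_3^2\sigma_2\sigma_1=1$ to collapse it to $(\sigma_1\sigma_2\sigma_1)^2=1$) exactly where Birman would. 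All the individual steps check out: the $\fS_4$-images $(1\,2)(3\,4)$ and $(1\,3)(2\,4)$ do show $a,b$ are independent involutions spanning the kernel, and $\ker\phi\subseteq Z(\B_3)$ combined with $(\sigma_1\sigma_2\sigma_1)^2=1$ does give $G\cong\B_3/Z(\B_3)\cong\PSL(2,\BZ)$, from which normality of $N$, $N\cap G=\{1\}$ and $NG=\Mcg(0,4)$ follow formally. The only stylistic difference from Birman's treatment is that you take $N\cong(\BZ/2\BZ)^2$ as geometric input from \cite{AMU}, whereas the paper remarks (and Birman proves) that $a^2=b^2=1$ and $ab=ba$ can be extracted directly from the relations of Theorem \ref{presentationSphere}; your hybrid buys a shorter algebraic computation at the cost of leaning on the hyperelliptic-involution machinery, which the paper in any case sets up and uses later.
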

Then one gets that $G$ is isomorphic to $\PSL(2,\BZ)$ under the following association:
\[
\sigma_1 \leftrightarrow A = \begin{pmatrix} 1 & 1 \\ 0 & 1 \end{pmatrix} \text{ , } \sigma_2 \leftrightarrow B = \begin{pmatrix} 1 & 0 \\ -1 & 1 \end{pmatrix} .
\]

It is easy to check that elements $a$ and $b$ commute and are of order $2$ by simple applications of Relations \ref{relationM(0,4)}. This gives that $N$ is isomorphic to $\BZ / 2 \BZ \times \BZ / 2 \BZ$
\end{example}

\section{TQFT-representations of $\Mcg(0,4)$.}\label{sectionm(04)faithful}

To construct the non semi-simple TQFT we need, as a ground tool, the algebra $\Ubar$ and some modules of it. These objects are first introduce in \ref{Ubar}. Then we construct the representation of $\Mcg(0,4)$ in \ref{Construction} using an explicit basis. Lastly, in \ref{faithfulness} we show that the obtained representation is faithful.

\tikzset{fleche/.style args={#1:#2:#3}{ postaction = decorate,decoration={name=markings,mark=at position #1 with {\arrow[#2,scale=2]{>}}},node[midway,above]{#3}]}}

\tikzstyle directed=[postaction={decorate,decoration={markings,
    mark=at position .65 with {\arrow{latex}}}}]

\tikzset{repmidarrow/.style={postaction=decorate,decoration={markings,mark={between positions 0.5 and 1 step #1 with {\arrow{latex}}}}}}

\subsection{$\Ubar$-modules}\label{Ubar}

Here is a first definition of the quantum enveloping algebra of the Lie algebra $\slt$. 

\begin{defn}[$\Uq$]\label{Uqnaif}
Let $q$ be a complex parameter. We define $U_q= U_q(\slt)$ as the $\BC$-algebra generated by the four generators $E,F,K,K^{-1}$ together with relations:
\[
KK^{-1}=K^{-1}K=1
\]
\[
KEK^{-1}=q^2E,\text{ } KFK^{-1}=q^{-2}F
\]
\[
\left[E, F \right] = \frac{K-K^{-1}}{q-q^{-1}} .
\]\label{relationUq}

$U_q$ is endowed with a coalgebra structure, with coproduct $\Delta$ and counit $\epsilon$ defined as follows:
\[
\begin{array}{rl}
\Delta(E)= 1\otimes E+ E\otimes K, & \Delta(F)= K^{-1}\otimes F+ F\otimes 1 \\
\Delta(K) = K \otimes K, & \Delta(K^{-1}) = K^{-1}\otimes K^{-1} \\
\epsilon(E) = \epsilon(F) = 0, & \epsilon(K) = \epsilon(K^{-1}) = 1
\end{array},
\]\label{relationDeltaUq}
and with an antipode defined as follows:
\[
S(E) = EK^{-1}, S(F)=-KF,S(K)=K^{-1},S(K^{-1}) = K.
\]
This turns $U_q$ into a {\em Hopf algebra}. 
\end{defn}

We present now a slightly modified version of the quantum enveloping algebra of $\slt$, that is presented in large details in \cite{CGP1} for instance. From now on, let $q$ be a root of unity of pair degree: i.e. such that $q^{2r}=1$ for some integer $r\ge 2$.


Let $\UsltH$ be the $\C$-algebra $U_q$ of Definition \ref{Uqnaif} improved with one more generator $H$, so given by generators $E, F, K, K^{-1}, H$ and the relations from $U_q$ together with relations:
\begin{align*}
  HK&=KH,
& [H,E]&=2E, & [H,F]&=-2F.
\end{align*}
The Hopf algebra structure of $\Ubar$ comes from the one of $U_q$ extended by:
\begin{align*}
  \Delta(H)&=H\otimes 1 + 1 \otimes H,
  & \epsilon(H)&=0,
  &S(H)&=-H.
\end{align*}

\begin{defn}[$\Ubar$]\label{algebraUbar}
$\Ubar$ is the Hopf algebra $\UsltH$ modulo the relations $E^\ro=F^\ro=0$. 
\end{defn} 

Let $V$ be a finite dimensional $\Ubar$-module. An eigenvalue $\lambda\in \C$ of the action $H:V\to V$ is called a \emph{weight} of $V$ and the associated eigenspace is called a \emph{weight space}. We call $V$ a \emph{weight module} if $V$ splits as a direct sum of weight spaces and if $K$ acts as the exponential of $H$ on $V$, namely $Kv=q^\lambda v$ if $v$ is a vector of weight $\lambda$.

\begin{defn}[$\Ubar$ braiding, {\cite[Subsection~2.2]{CGP1}}]\label{Ubarbraiding}
Let $\cat$ be the category of finite dimensional weight $\Ubar$-modules, and let $V$ and $W$ be two elements of this category. Let $\RR$ be the R-matrix defined by Ohtsuki in \cite{Oht} with the expression that can be found in \cite[Equation~(5)]{CGP1}. It is not an element of $\Ubar \otimes \Ubar$ so it is not a universal R-matrix, but it yields an operator on $V\otimes W$ as follows:
\[
\RR = q^{H\otimes H} \sum_{n=0}^{r-1} \frac{\{1\}^{2n}}{\{n\}!} q^{n(n-1)/2}E^n \otimes F^n,
\]
where the action $q^{H\otimes H}$ is described for $v$ and $v'$, two weight vectors of weights $\lambda$ and $\lambda'$, respectively as follows:
\[
q^{H\otimes H} ( v\otimes v') = q^{\lambda \lambda'} v\otimes v'.
\]
This way, $\RR$ is a well defined linear map, and gives rise to a braiding:
\[
c_{V,W}: \left. \begin{array}{ccc}
V\otimes W & \to & W \otimes V \\
v\otimes w & \mapsto & \tau ( \RR (v\otimes x))
\end{array} \right.
\]
where $\tau$ is defined by $\tau (v\otimes w) = w \otimes v$.

\end{defn}

The above definition uses quantum numbers defined as follows. 

\begin{defn}
For $n \in \BN$:
\[
\lbrace x \rbrace = q^x - q^{-x} \text{ and } \lbrace n \rbrace ! = \lbrace n \rbrace  \lbrace n-1 \rbrace \cdots \lbrace 1 \rbrace ,
\]
where we use the notation $q^x = e^{\frac{\pi \sqrt{-1} x}{r}}$.
\end{defn}


We focus on a special class of finite dimensional weight modules, the one we will use for the quantum representations construction below.
For each $\lambda\in \C$ there exists a unique $\Ubar$-module $V_\lambda$ which is $r$-dimensional and of highest weight $\lambda + r-1$.  The module $V_\lambda$ has a basis $\{e^{\lambda}_0,\ldots,e^{\lambda}_{r-1}\}$ whose action is given by
\[
H.e^{\lambda}_i=(\lambda + r-1-2i) e^{\lambda}_i,\quad E.e^{\lambda}_i= \frac{\qn i\qn{i-\lambda}}{\qn1^2}
e^{\lambda}_{i-1} ,\quad F.e^{\lambda}_i=e^{\lambda}_{i+1}.
\]

 The module $V_\lambda$ is called \emph{typical} if $\lambda\in  (\C\setminus \Z)\cup r\Z$, \emph{atypical} otherwise.  If $V_\lambda$ is typical then  it is simple, and is generated (as a module) by any of the  basis vectors $e^{\lambda}_i$. For an eigenvector for the action of $H$ (the $e^{\lambda}_i$'s for instance) we call a {\em weight} its eigenvalue. One remarks from the expression of the action that the weights decrease $2$ by $2$ from $e^{\lambda}_0$ of weight $\lambda + r-1$, to $e^{\lambda}_{r-1}$ of weight $\lambda -r+1$ and so on, so that $\lambda$ is the {\em ``middle weight"} of $V_{\lambda}$.
 
Let $V_{\lambda}$ be the module of ``middle" weight $\lambda$ and of dimension $r$, for a $\lambda \in \C$ and $\lbrace e^{\lambda}_0,e^{\lambda}_1,\ldots,e^{\lambda}_{r-1} \rbrace$ its standard basis.

\begin{defn}\label{Rmatrixfortypical}
Let $\lambda$ and $\mu$ be elements of $\BC$. We define the morphism $\RRR$ from $V_{\lambda} \otimes V_{\mu}$ to $V_{\mu} \otimes V_{\lambda}$ as follows:
\[\RRR(\lambda,\mu) = c_{V_{\lambda},V_{\mu}}\].
\end{defn}

This operator $\RRR$ used as an R-matrix, provides the braid representations arising from the $\RT$-functor. 

\subsection{Construction of the representation for $\Mcg(0,4)$}\label{Construction}

We follow \cite{BCGP2} to give a basis of the vector space associated via the non semi-simple TQFT functor to the sphere with $4$ punctures. The definitions of typical module of $\Ubar$ can be found in Section \ref{Ubar} while tools as Clebsch-Gordan quantum coefficients and 6j-symbols are taken from \cite{C-M,CGP2}. From now on, we let $S_4$ be the sphere containing four marked points $p_1, p_2,p_3,p_4$. In order to compute the TQFT, we shall decorate the punctures using $\Ubar$ simple modules parametrized by complex numbers. Let $\lambda_1,\lambda_2,\lambda_3,\lambda_4$ be complex parameters in $\left( \C \backslash \Z \right) \bigcup r\Z$, considering $q$ to be a root of unity such that $q^{2r} = 1$. We recall from Section \ref{Ubar} that $\cat$ designates the category of $\Ubar$ weight modules. Let $S_4(\lambda_1,\lambda_2,\lambda_3,\lambda_4)$ be $S_4$ decorated by modules $V_{\lambda_i}\in \cat$ associated to each $p_i$, with $i=1,2,3,4$ (to fit with the decorated formalism \cite[Subsection~3.3]{BCGP2}). See Section \ref{Ubar} for the definition of the $\V_{\lambda_i}$'s. If $V=\left( (V_{\lambda_1},+),(V_{\lambda_2},+),(V_{\lambda_3},+),(V_{\lambda_4},+) \right) $, then $S_4(\lambda_1,\lambda_2,\lambda_3,\lambda_4)$ refers to the sphere with four punctures decorated by $V$, namely $S^2_V$ using notations from Section 6.1  of \cite{BCGP2}. We outline the construction.  

\begin{prop}[{\cite[Proposition~6.1]{BCGP2}}]\label{TQFTtoHom}
Let $\cV (\lambda_1,\lambda_2,\lambda_3,\lambda_4)$ be the $0$-graded vector space associated to $S_4(\lambda_1,\lambda_2,\lambda_3,\lambda_4)$ via the non semi-simple TQFT functor $\BV$ (from Theorem \ref{BCGPfunctor}). Algebraically, the space $\cV (\lambda_1,\lambda_2,\lambda_3,\lambda_4)$ is isomorphic to $\Hom _{\cat} (\BI,F(V))$ where $F(V) = V_{\lambda_1} \otimes V_{\lambda_2} \otimes V_{\lambda_3} \otimes V_{\lambda_4}$ is a module of $\cat$, and $\BI$ the identity object of $\cat$, namely the one dimensional $\Ubar$-module. 
\end{prop}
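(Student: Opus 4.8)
The plan is to apply the universal construction recalled in Remark \ref{sketchofBHMV} directly, realizing $S_4(\lambda_1,\lambda_2,\lambda_3,\lambda_4)$ as the boundary of a standard cobordism from the empty set. Since the sphere bounds a $3$-ball $B^3$, I would first fix a colored ribbon graph $\Gamma_f$ embedded in $B^3$ whose univalent ends meet the boundary sphere exactly at the four marked points $p_1,\dots,p_4$, decorated by $V_{\lambda_1},\dots,V_{\lambda_4}$. The Reshetikhin--Turaev functor $\RT$ (Theorem \ref{RTfunctor}) evaluates such a graph-in-a-ball as an element of $\Hom_\cat(\BI,F(V))$, and conversely every morphism in $\Hom_\cat(\BI,F(V))$ arises this way. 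This sets up a candidate map $\Phi : \Hom_\cat(\BI,F(V)) \to \cV(\lambda_1,\dots,\lambda_4)$ sending $f$ to the class in $V_1(S_4)/\ker$ of the cobordism $(B^3,\Gamma_f)$, whose boundary is precisely $S_4$ with the prescribed coloring.

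Next I would show that $\Phi$ is surjective. Any decorated cobordism $C$ from $\emptyset$ to $S_4$ is obtained from the standard ball by surgery on a framed link carrying an interior colored graph; invoking the surgery presentation (Theorem \ref{lickorishwallace}) together with the Kirby-move invariance built into the invariant $N_r^0$ of \cite{CGP2}, one slides the surgery link and absorbs the interior decoration into a single colored graph in the ball. Because the genus is $0$ there is no further handle to account for, so modulo the kernel of the pairing the class of $C$ coincides with that of some $(B^3,\Gamma_f)$; hence $\Phi$ is onto.

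Finally, and this is the heart of the argument, I would compute the universal pairing on the image of $\Phi$ and identify it with the categorical one, which yields injectivity. Gluing $(B^3,\Gamma_f)$ to $(B^3,\Gamma_g)$ along $S_4$ produces the closed pair $(S^3,\hat\Gamma)$, where $\hat\Gamma$ is the closure obtained by composing $f$ with the reflected graph of $g$. By the construction of $N_r^0$ from $\RT$ via the modified trace, the value $N_r^0(S^3,\hat\Gamma)$ equals, up to the nonzero modified-dimension normalization, the scalar attached to the composite $g^*\circ f \in \End_\cat(\BI)=\C$. Thus the universal pairing matches the natural pairing $\Hom_\cat(\BI,F(V)) \times \Hom_\cat(F(V),\BI) \to \C$. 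Since each $V_{\lambda_i}$ is typical, hence simple and projective in $\cat$, the module $F(V)$ is a tensor product of typicals on which, via rigidity $\Hom_\cat(F(V),\BI)\cong \Hom_\cat(\BI,F(V))^*$, this pairing is non-degenerate; therefore $\ker$ meets the image of $\Phi$ trivially and $\Phi$ descends to an isomorphism, landing entirely in the degree-$0$ part consistent with the $\BZ$-grading of $\BV$. The main obstacle is exactly this last step: controlling the kernel of the universal pairing requires the non-degeneracy furnished by the modified trace and the typicality (simplicity and projectivity) of the coloring modules, without which the quotient could be strictly smaller than $\Hom_\cat(\BI,F(V))$.
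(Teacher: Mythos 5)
Your proposal is correct and follows essentially the same route as the paper, which itself only gives an ``idea of the proof'' deferring to \cite[Proposition~6.1]{BCGP2}: generators of the universal-construction space are realized as $\cat$-colored graphs in the ball and interpreted via the $\RT$-functor as elements of $\Hom_\cat(\BI,F(V))$, with injectivity coming from the non-degeneracy of the pairing $\Hom_\cat(\BI,F(V))\times\Hom_\cat(F(V),\BI)\to\BC$, which the paper likewise attributes to the projectivity of $F(V)$. Your write-up simply fills in more of the details (surgery presentations for surjectivity, the modified trace for the pairing) than the paper's sketch does.
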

\begin{proof}[Idea of the proof]
The idea of this proposition is that the space $\cV (\lambda_1,\lambda_2,\lambda_3,\lambda_4)$ provided by the universal construction is generated by cobordisms between the empty set and $S_4(\lambda_1,\lambda_2,\lambda_3,\lambda_4)$ (Remark \ref{sketchofBHMV}). This corresponds to $\cat$-decorated ribbon tangles embedded inside the $3$-dimensional ball having $S_4(\lambda_1,\lambda_2,\lambda_3,\lambda_4)$ as boundary (tangles ending at punctures). Then the $\RT$-functor fully interprets these ribbon tangles as elements of $\Hom _{\cat} (\BI,F(V))$.
\end{proof}


To transform a vector $v$ of $\cV (\lambda_1,\lambda_2,\lambda_3,\lambda_4)$ under the action of an element of the mapping class group one must just glue the corresponding mapping cylinder to the $S_4(\lambda_1,\lambda_2,\lambda_3,\lambda_4)$ at the extremity of the vector $v$ (interpreted as a cobordism) so to get a new cobordism between the empty set and $S_4(\lambda_1,\lambda_2,\lambda_3,\lambda_4)$ defined to be the image of $v$ under the mapping class action. The latter corresponds to the gluing of a ribbon sphere braid to the ribbon tangle corresponding to $v$. The ribbon aspect of the theory forces one to work with arrows instead of punctures $p_1,\ldots , p_4$, as extremities of ribbons are arrows. Then one must consider mapping classes fixing the arrows at the punctures, which correspond to mapping classes of the sphere with boundary components instead of punctures, but one can verify the following remark allowing us to deal with the whole mapping classes of the punctured sphere.

\begin{rmk}
A simple full Dehn twist around one puncture colored by a $\Ubar$ simple module gives a full twist to the arrow. In terms of morphism of the category $\cat$, this corresponds (through the $\RT$-functor) to a morphism from a simple module to itself. By Schur's Lemma, such morphism is diagonal.
\end{rmk}

Hence, a solution to avoid a restriction of the mapping class group is to consider the projective representations over the TQFT space $\cV (\lambda_1,\lambda_2,\lambda_3,\lambda_4)$ associated with $S_4(\lambda_1,\lambda_2,\lambda_3,\lambda_4)$, keeping simple punctures and forgetting the arrows at punctures. We will stick to this from now on and until the end of this section. In this framework of projective representations, it is not necessary to consider ribbons anymore, we simply consider $\cat$-colored tangles.  We introduce $\cat$-decorated trivalent graphs that will determine a basis of the TQFT later on.

\begin{defn}[$\CH$ and $\CI$ graphs]\label{HandIgraphs}
Let $\CH(\lambda_1,\lambda_2,\lambda_3,\lambda_4, \beta)$ be the decorated graph on the left of Figure \ref{Hspinegraph} embedded into the $3$-dimensional ball having the punctures $p_i$'s as endpoints in $S_4$ - the boundary of the ball. In this graph, $\beta$ is another complex parameter. A decoration $\lambda \in \BC$ refers to the module $V_{\lambda}$. In Figure \ref{Hspinegraph} the graph $\CI (\lambda_1,\lambda_2,\lambda_3,\lambda_4,\gamma)$ is also represented on the right, which corresponds to another vector of $\cV (\lambda_1,\lambda_2,\lambda_3,\lambda_4)$ used below. In what follows, we use a graph $\CG$ to refer to its image $\cV( \CG) \in \Hom _{\cat} (\BI,F(V))$ if no confusion arises in equations.

\begin{figure}[h]
\[\begin{array}{rl}\vcenter{\hbox{
\begin{tikzpicture}
\draw (0,0) circle (2);
\node[below left] (p1) at (225:2) {$p_1$};
\node[above left] (p2) at (135:2) {$p_2$};
\node[above right] (p3) at (45:2) {$p_3$};
\node[below right] (p4) at (-45:2) {$p_4$};
\coordinate (P1) at (225:2);
\coordinate (P2) at (135:2);
\coordinate (P3) at (45:2);
\coordinate (P4) at (-45:2);
\coordinate (B1) at (-0.5,0);
\coordinate (B2) at (0.5,0);
\draw[repmidarrow=50pt] (P1) -- (B1) node[midway,below right]{$\lambda_1$};
\draw[repmidarrow=50pt] (P2) -- (B1) node[midway,below left]{$\lambda_2$};
\draw[repmidarrow=50pt] (P3) -- (B2) node[midway,below right]{$\lambda_3$};
\draw[repmidarrow=50pt] (P4) -- (B2) node[midway,below left]{$\lambda_4$};
\draw[repmidarrow=50pt] (B1) -- (B2) node[midway,below]{$\beta$};
\end{tikzpicture}
}} &
\vcenter{\hbox{
\begin{tikzpicture}
\draw (0,0) circle (2);
\node[below left] (p1) at (225:2) {$p_1$};
\node[above left] (p2) at (135:2) {$p_2$};
\node[above right] (p3) at (45:2) {$p_3$};
\node[below right] (p4) at (-45:2) {$p_4$};
\coordinate (P1) at (225:2);
\coordinate (P2) at (135:2);
\coordinate (P3) at (45:2);
\coordinate (P4) at (-45:2);
\coordinate (A1) at (0,-0.5);
\coordinate (A2) at (0,0.5);
\draw[repmidarrow=50pt] (P1) -- (A1) node[midway,above left]{$\lambda_1$};
\draw[repmidarrow=50pt] (P2) -- (A2) node[midway,below left]{$\lambda_2$};
\draw[repmidarrow=50pt] (P3) -- (A2) node[midway,below right]{$\lambda_3$};
\draw[repmidarrow=50pt] (P4) -- (A1) node[midway,above right]{$\lambda_4$};
\draw[repmidarrow=50pt] (A1) -- (A2) node[midway,right]{$\gamma$};
\end{tikzpicture}
}} 
\end{array}
\]
\caption{Graphs $\CH(\lambda_1,\lambda_2,\lambda_3,\lambda_4, \beta)$ and $\CI (\lambda_1,\lambda_2,\lambda_3,\lambda_4,\gamma)$}
\label{Hspinegraph}
\end{figure}
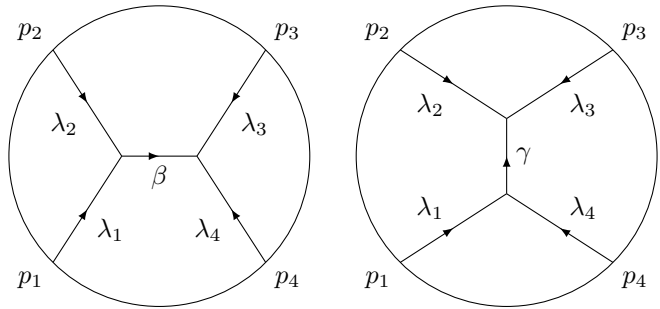

\end{defn}

\begin{rmk}\label{Hr}
As we only consider simple-module coloring of punctures, we can use the Clebsch-Gordan decomposition of tensor products of simple modules (see \cite[Section~1.3]{C-M}) to establish a correspondence between $\cat$-tangles and admissible trivalent graphs embedded in the ball, colored with elements of $\cat$ and ending at punctures. The word {\em admissible} refers here to the fact that the trivalent graphs must satisfy a relation at each node provided by the Clebsch-Gordan formula. Indeed, let $V_a$ and $V_b$ be two typical modules of middle weights $a$ and $b \in \left( \C \backslash \Z \right) \bigcup r\Z$. For $a,b \in \BC$ generic, it holds: $V_a \otimes V_b = \bigoplus_{a+b-c \in \Hr} V_c$ with $\Hr = \{ r-1, r-3, \ldots, -r+1 \}$, and any $\Ubar$ module map $V_c \to V_a \otimes V_b$ is a scalar multiple of the inclusion map of $V_c$ into $V_a \otimes V_b$ given in Theorem 1.7 of \cite{C-M}.

\end{rmk}

From Proposition \ref{TQFTtoHom} and from the construction of the TQFT functor $\BV$ from \cite{BCGP2} presented in Theorem \ref{BCGPfunctor}, one can check the following fact:

\begin{fact}[$\CH$ graphs basis]\label{Harebasis}
Let $ \cV(\lambda_1 , \lambda_2 , \lambda_3 , \lambda_4)$ be the $0$-graded TQFT space associated with $S_4(\lambda_1 , \ldots , \lambda_4)$ by functor $\BV$ (Theorem \ref{BCGPfunctor}). Then $\cV$ is isomorphic to the vector space generated by all $\cat$-decorated trivalent graphs inside the ball having ends at punctures, modulo the whole set of Relations (N a--j) of \cite[Section~2.2]{CGP2}. Moreover a basis of $\cV$ is given by the set of all graphs $\CH(\lambda_1,\lambda_2,\lambda_3,\lambda_4, \beta)$ satisfying the node condition (or  {\em admissible condition}), namely that the sum of parameters arriving to each vertex must be in $\Hr$.
\end{fact}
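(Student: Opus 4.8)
The plan is to deduce both assertions from Proposition \ref{TQFTtoHom}, which already identifies $\cV(\lambda_1,\lambda_2,\lambda_3,\lambda_4)$ with $\Hom_\cat(\BI,F(V))$, and then to read off a basis of this $\Hom$-space by iterated Clebsch--Gordan decomposition (Remark \ref{Hr}). So the work splits into establishing the presentation ``graphs modulo (N a--j)'' and then extracting the $\CH$-basis from the fusion calculus.

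For the first isomorphism I would argue as follows. By the universal construction (Remark \ref{sketchofBHMV}) the space $\cV$ is spanned by $\cat$-decorated ribbon graphs embedded in the ball and ending at the punctures $p_1,\dots,p_4$. The $\RT$-functor (Theorem \ref{RTfunctor}) sends each such graph to a morphism in $\Hom_\cat(\BI,F(V))$, and I would check that this assignment is surjective: every intertwiner between tensor products of the simple modules $V_{\lambda_i}$ is a composite of structural morphisms (braidings, (co)evaluations) and trivalent vertices, each of which is the image of an elementary graph. Two graphs have the same image under $\RT$ exactly when they are related by the graphical (skein) moves, and by \cite{CGP2} these are precisely the relations (N a--j). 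Hence the quotient of the free graph space by (N a--j) maps isomorphically onto $\Hom_\cat(\BI,F(V))$, which by Proposition \ref{TQFTtoHom} is $\cV$.

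To see that the $\CH$-graphs form a basis I would work with typical colors, so that the tensor products below split as direct sums of typical simple modules, each with multiplicity one, as in Remark \ref{Hr}. Decompose the first two factors, $V_{\lambda_1}\otimes V_{\lambda_2}=\bigoplus_{\beta}V_\beta$, the sum running over those $\beta$ with $\lambda_1+\lambda_2-\beta\in\Hr$; by Remark \ref{Hr} each inclusion $V_\beta\hookrightarrow V_{\lambda_1}\otimes V_{\lambda_2}$ is unique up to a scalar and is exactly the left trivalent vertex of $\CH(\lambda_1,\lambda_2,\lambda_3,\lambda_4,\beta)$. This gives
\[
\Hom_\cat(\BI,F(V))=\bigoplus_{\beta}\Hom_\cat\!\left(\BI,\,V_\beta\otimes V_{\lambda_3}\otimes V_{\lambda_4}\right).
\]
Decomposing $V_{\lambda_3}\otimes V_{\lambda_4}=\bigoplus_{\gamma}V_\gamma$ in the same way and using that $\Hom_\cat(\BI,V_\beta\otimes V_\gamma)\neq 0$ iff $V_\gamma\cong V_\beta^{*}$ (Schur's lemma), the $\beta$-summand is one-dimensional precisely when $\beta$ also satisfies the right-node admissibility, i.e. when $V_\beta^{*}$ occurs in $V_{\lambda_3}\otimes V_{\lambda_4}$. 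The canonical generator of that one-dimensional space is the right trivalent vertex, and contracting it against the left vertex along the edge $\beta$ produces exactly the image of $\CH(\lambda_1,\lambda_2,\lambda_3,\lambda_4,\beta)$. Thus $\{\CH(\lambda_1,\lambda_2,\lambda_3,\lambda_4,\beta)\}$, indexed by the $\beta$ satisfying the Clebsch--Gordan condition at both vertices (the node condition), is a basis of $\cV$.

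Two points need care, and I expect them to be the main obstacle. The cleanest is the completeness of (N a--j): that these relations generate the entire kernel of $\RT$ on the free graph space is the technical input I would import wholesale from \cite{CGP2} rather than reprove. The genuine verification in this statement is the non-semisimple bookkeeping in the fusion computation: one must restrict to generic (typical) parameters so that the multiplicity-one Clebsch--Gordan rule of Remark \ref{Hr} applies and no projective or atypical summands intrude on $V_{\lambda_1}\otimes V_{\lambda_2}$ and $V_{\lambda_3}\otimes V_{\lambda_4}$. Under this genericity the node condition at both vertices automatically encodes the compatibility constraint on $\lambda_1+\lambda_2+\lambda_3+\lambda_4$, so that $\cV$ vanishes exactly when no admissible $\beta$ exists, and the empty set is then the required basis.
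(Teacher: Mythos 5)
Your argument is correct, but it establishes the linear independence of the $\CH$-graphs by a genuinely different mechanism than the paper. Both proofs start from Proposition \ref{TQFTtoHom} and both import wholesale from \cite{CGP2} the fact that the relations (N a--j) present the graph skein module; the divergence is in how the $\CH$-family is shown to be a basis. The paper argues generation by reducing an arbitrary admissible trivalent graph to $\CH$-graphs via the relations (N a--j), and then proves independence by evaluating the non-degenerate pairing $\Hom_{\cat}(\BI,F(V))\times\Hom_{\cat}(F(V),\BI)\to\BC$ (non-degenerate because $F(V)$ is projective) on the $\CH$-graphs against their dual graphs, following the scheme of \cite[Section~6.3]{BCGP2}. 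You instead compute $\Hom_{\cat}(\BI,F(V))$ outright: the multiplicity-one Clebsch--Gordan rule of Remark \ref{Hr} splits it as a direct sum of one-dimensional summands indexed by the admissible $\beta$, each spanned by the image of $\CH(\lambda_1,\lambda_2,\lambda_3,\lambda_4,\beta)$, which gives generation, independence, and the dimension count in one stroke. Your route is more self-contained and more elementary for this genus-zero, four-point case, but it is tied to the genericity hypothesis (typical colors, so that no atypical or projective summands appear in $V_{\lambda_1}\otimes V_{\lambda_2}$ and $V_{\lambda_3}\otimes V_{\lambda_4}$) and to having an explicit fusion decomposition of $F(V)$; the paper's pairing argument only needs projectivity of $F(V)$ and is the one that scales to the surfaces of higher genus treated in \cite{BCGP2}. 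One small point to make explicit in your version: you should check that the composite morphism represented by $\CH(\lambda_1,\lambda_2,\lambda_3,\lambda_4,\beta)$ (coevaluation followed by the two trivalent vertices) is actually a \emph{nonzero} element of its one-dimensional summand, which holds for typical $\beta=\lambda_1+\lambda_2\pm1$ but is exactly the kind of quantum-dimension non-vanishing that the paper's pairing computation verifies en route.
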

\begin{proof}[Idea of the proof]
The first step would be to interpret $\cat$-decorated tangles embedded in the ball (decorated cobordisms from the empty set to the sphere) as $\cat$-decorated trivalent graphs. This fact is an inherent tool in the quantum-module category $\cat$ and is a classical property of the non semi-simple $\RT$-functor. It works the same for quantum invariants of manifold from \cite{CGP2}. Namely, from the decomposition of tensor products of simple objects given by the Clebsch-Gordan formula (Remark \ref{Hr}), one obtains the formula of Proposition \ref{ClebschGordan} between graphs stated below. This gives a hint to pass from tangles to trivalent graphs. Once this step is done, one has to show that the family of $\CH$-graphs yields a basis of this space of graphs. 

One can check that Relations (N a--j) from \cite[Section~2.2]{CGP2} ensure that this family of graphs generates $\cV (\lambda_1,\lambda_2,\lambda_3,\lambda_4)$ (i.e. that any admissible trivalent graph can be expressed as a linear combination of the $\CH$ graphs using these relations). 

There is a little more work to get that the family is linearly independent. For instance, in the proof of \cite[Proposition~6.1]{BCGP2}, the pairing:
\[
\Hom _{\cat} (\BI,F(V)) \times \Hom _{\cat} (F(V),\BI) \to \BC
\]
is shown to be non-degenerate (as $F(V)$ is a projective $\Uq$-module). The pairing corresponds - in terms of cobordisms - to the one (schematically) presented in Remark \ref{sketchofBHMV}. One can compute it using $\CH$ graphs and their dual graphs corresponding to elements of $\Hom _{\cat} (F(V),\BI)$ and deduce the linear independence of these families. 

This type of proof is performed in \cite[Section~6.3]{BCGP2} to give a basis for the TQFT of empty surfaces. 
\end{proof}

\begin{rmk}[$\CI$ graphs basis]
The graphs $ \CI (\lambda_1, \lambda_2, \lambda_3, \lambda_4, \gamma_{\pm})$ correspond to another basis of $\cV (\lambda_1,\lambda_2,\lambda_3,\lambda_4)$, with the admissible values for $\gamma$ (see Remark \ref{HisI} below for instance).
\end{rmk}



In our case, the node conditions (``admissible conditions") are the following ones:
\begin{equation}
\label{M04basiscondition1}
\lambda_1 + \lambda_2 - \beta \in \Hr
\end{equation}
\begin{equation}
\label{M04basiscondition2}
\lambda_3 + \lambda_4 + \beta \in \Hr ,
\end{equation}

so a basis of $\cV (\lambda_1,\lambda_2,\lambda_3,\lambda_4)$ is given by the set $\{\CH(\lambda_1,\lambda_2,\lambda_3,\lambda_4, \beta)\}$ with all possible parameters $\beta$ such that Conditions \ref{M04basiscondition1} and \ref{M04basiscondition2} are satisfied.

\begin{Not}[$0$-graded, level $2$ TQFT]
We suppose from now on that $r=2$, then $\Hr=\{ -1, +1 \}$. Suppose also that $\lambda_4 = - \left( \lambda_1 + \lambda_2 + \lambda_3 \right)$, then we are left with three free parameters, namely $\lambda_1, \lambda_2, \lambda_3$. This set-up corresponds to the $0$-graded TQFT in the case $r=2$ (often referred to as the ``level $2$" non semi-simple TQFT), we denote the corresponding $0$-graded space $\cV(\lambda_1 , \ldots , \lambda_4)$.
\end{Not}

There are two possible graphs given by the two possible values for $\beta$. Let:
\[
\beta_+ = \lambda_1 + \lambda_2 + 1
\]
\[
\beta_- = \lambda_1 + \lambda_2 - 1 .
\]

We use the notations $\CH_+(\lambda_1, \lambda_2, \lambda_3, \lambda_4)$ and $\CH_-(\lambda_1, \lambda_2, \lambda_3, \lambda_4)$ to refer to the graphs $\CH(\lambda_1, \lambda_2, \lambda_3, \lambda_4,\beta)$ with $\beta=\beta_+=\lambda_1 + \lambda_2 +1$ and $\beta=\beta_-=\lambda_1 + \lambda_2 -1$ respectively; and $\CI_+(\lambda_1, \lambda_2, \lambda_3, \lambda_4)$ and $\CI_-(\lambda_1, \lambda_2, \lambda_3, \lambda_4)$ for the graphs $\CI(\lambda_1, \lambda_2, \lambda_3, \lambda_4,\gamma)$ with $\gamma=\gamma_+=\lambda_1 + \lambda_4 +1$ and $\gamma=\gamma_-=\lambda_1 + \lambda_4 -1$ respectively. We could have removed $\lambda_4$ from the arguments, as $\lambda_4$ is fixed, depending on the other arguments. This fact remains true even when we permute punctures, so that it permutes colors of the graphs, but the last argument will always be the opposite of the sum of the others, as $\lambda_i= - \sum_{j \neq i} \lambda_j$ for $i=1,2,3,4$. 

From Relations (N a--j) \cite[Section~2.2]{CGP2} and mentioned to define $\cV$, we will need three of them to build the representation that we recall in the three following propositions. 

\begin{prop}[{\cite[Equation~(N~i)]{CGP2}}]\label{ClebschGordan}

The following equality holds in $\cV(\lambda_1 , \ldots , \lambda_4)$:

\begin{align*}
\begin{array}{c}
\begin{tikzpicture}
\draw[repmidarrow=50pt] (0,0) -- (0,1.5) node[midway,left]{$a$};
\draw[repmidarrow=50pt] (0.7,0) -- (0.7,1.5) node[midway,right]{$b$};
\end{tikzpicture}
\end{array}
= \sum_{\gamma \in a+b+\Hr} \qd (\gamma)
\begin{array}{c}
\begin{tikzpicture}
\coordinate (P1) at (0.5,0.5);
\coordinate (P2) at (0.5,1);
\draw[repmidarrow=50pt] (0,0) -- (P1) node[midway,left]{$a$};
\draw[repmidarrow=50pt] (1,0) -- (P1) node[midway,right]{$b$};
\draw[repmidarrow=50pt] (P1) -- (P2) node[midway,left]{$\gamma$};
\draw[repmidarrow=50pt] (P2) -- (0,1.5) node[midway,left]{$a$};
\draw[repmidarrow=50pt] (P2) -- (1,1.5) node[midway,right]{$b$};
\end{tikzpicture}
\end{array}
\end{align*}

where graphs are considered to be the same outside the part of the picture drawn in the small ball considered here.

\end{prop}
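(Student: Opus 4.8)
The plan is to read both sides of the displayed identity as morphisms in $\cat$ and to recognise the right-hand side as a normalised resolution of the identity on $V_a \otimes V_b$. Recall from Remark \ref{Hr} that, since the colors lie in $(\C\setminus\Z)\cup r\Z$, the modules involved are typical and generic, so one has the Clebsch--Gordan decomposition $V_a \otimes V_b = \bigoplus_{\gamma \in a+b+\Hr} V_\gamma$; moreover each trivalent vertex corresponds, via Theorem~1.7 of \cite{C-M}, to a canonical inclusion $\iota_\gamma \colon V_\gamma \embed V_a \otimes V_b$ (the upper vertex, splitting $\gamma$ into $a,b$) together with the dual projection $\pi_\gamma \colon V_a \otimes V_b \onto V_\gamma$ (the lower vertex, merging $a,b$ into $\gamma$). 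In this dictionary the two parallel strands on the left are $\Id_{V_a\otimes V_b}=\Id_{V_a}\otimes\Id_{V_b}$, while the $\gamma$-summand on the right reads $\qd(\gamma)\,\iota_\gamma \circ \pi_\gamma$, so the assertion is that these normalised ``theta insertions'' sum to the identity.

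First I would invoke that the summands $V_\gamma$ are simple (typicality) and pairwise non-isomorphic for generic $a,b$. Schur's lemma then yields $\pi_{\gamma'} \circ \iota_\gamma = 0$ whenever $\gamma \neq \gamma'$, and $\pi_\gamma \circ \iota_\gamma = c_\gamma\,\Id_{V_\gamma}$ for a nonzero scalar $c_\gamma$, namely the value of the corresponding bubble (theta) graph. Consequently the operators $e_\gamma := c_\gamma^{-1}\,\iota_\gamma \circ \pi_\gamma$ are orthogonal idempotents, each projecting $V_a \otimes V_b$ onto its $V_\gamma$-isotypic summand, and by completeness of the Clebsch--Gordan decomposition one has $\sum_\gamma e_\gamma = \Id_{V_a\otimes V_b}$. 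Matching this against the displayed formula pins down the coefficient as $\qd(\gamma)=c_\gamma^{-1}$, i.e.\ the normalisation is exactly the one forced by requiring each insertion to be idempotent.

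The main obstacle is the explicit evaluation of the bubble $c_\gamma$ and the check that $c_\gamma^{-1}$ coincides with the factor $\qd(\gamma)$ prescribed in \cite{CGP2}. I would carry this out by composing the inclusion and projection written in the standard basis $\{e^a_i \otimes e^b_j\}$ through the Clebsch--Gordan coefficients of \cite[Theorem~1.7]{C-M}, then reading off the scalar by which $\pi_\gamma\circ\iota_\gamma$ acts on a highest-weight vector of $V_\gamma$; weight considerations kill all off-diagonal terms, so a single entry determines $c_\gamma$, and the remaining task is a bookkeeping of the quantum-integer factors. Finally, since $\cV(\lambda_1,\ldots,\lambda_4)$ is by construction the space of $\cat$-decorated trivalent graphs modulo Relations~(N~a--j) of \cite[Section~2.2]{CGP2}, the local identity just obtained is precisely Relation~(N~i) applied inside the small ball drawn in the picture, and hence descends to the stated equality in $\cV(\lambda_1,\ldots,\lambda_4)$.
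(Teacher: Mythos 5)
Your argument is mathematically sound, but note that the paper itself offers no proof of this statement: it is quoted verbatim as Relation (N~i) of \cite[Section~2.2]{CGP2}, and within the paper's own framework the identity holds essentially by construction, since Fact \ref{Harebasis} \emph{defines} $\cV(\lambda_1,\ldots,\lambda_4)$ as the span of $\cat$-decorated trivalent graphs modulo the whole set of Relations (N~a--j). What you have written is instead a proof of the underlying categorical fact, and it is the standard one: since $a,b$ are generic, $V_a\otimes V_b$ decomposes multiplicity-freely into pairwise non-isomorphic typical (hence simple) summands $V_\gamma$, Schur's lemma makes the normalised compositions $c_\gamma^{-1}\,\iota_\gamma\circ\pi_\gamma$ into a complete family of orthogonal idempotents, and the coefficient $\qd(\gamma)$ is pinned down as the inverse of the bubble scalar. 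Two small points to keep you honest: first, you should say explicitly why $c_\gamma\neq 0$ (the pairing of the one-dimensional spaces $\Hom(V_\gamma,V_a\otimes V_b)$ and $\Hom(V_a\otimes V_b,V_\gamma)$ is nondegenerate because $V_\gamma$ is a direct summand, so the projection onto it factors through $\iota_\gamma$ and $\pi_\gamma$); second, the identification $c_\gamma^{-1}=\qd(\gamma)$ genuinely depends on the normalisation of the trivalent vertices chosen in \cite[Theorem~1.7]{C-M} and on $\qd$ being the \emph{modified} dimension of \cite{CGP2} (the ordinary quantum dimension of a typical module vanishes), so that computation, which you defer, is where the content of the precise constant lives. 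With that caveat your route is correct and is, in effect, the proof that \cite{CGP2} relies on; the paper's route simply outsources it to the citation.
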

 
\begin{prop}[{\cite[Equation~(N~j)]{CGP2}}]\label{HtoI}
There is the change of basis formula (between $\CH$ and $\CI$) that is obtained using what we call $6j$-symbols as follows:
\[
\CH_{\pm} (\lambda_1, \lambda_2, \lambda_3, \lambda_4) = \sum_{\epsilon = \pm 1} \qd(\lambda_1+\lambda_4 + \epsilon) \sjtop{\lambda_1}{\lambda_2}{\lambda_1+ \lambda_2 \pm 1}{\lambda_3}{-\lambda_4}{\lambda_1+\lambda_4 + \epsilon} \CI_{\epsilon} (\lambda_1, \lambda_2, \lambda_3, \lambda_4) .
\] 
where:
\begin{eqnarray*}
\sjtop{j_1}{j_2}{j_3}{j_4}{j_5}{j_6}=
(-1)^{r-1+B_{165}} \,
\dfrac{\{B_{345}\}! \, \{B_{123}\}!}
{\{B_{246}\}! \, \{B_{165}\}!} \,
\qbin{j_3+r-1}{A_{123}+1-r} \,
{\qbin{j_3+r-1}{B_{354}}}^{-1}\times
\\
\times\sum_{z = m}^{M}\,
(-1)^z\qbin{A_{165}+1}{j_5+z+r} \,
\qbin{B_{156}+z}{B_{156}} \,
\qbin{B_{264}+B_{345}-z}{B_{264}} \,
\qbin{B_{453}+z}{B_{462}}
\end{eqnarray*}
where $A_{xyz}=\frac{j_x+j_y+j_z+3(r-1)}{2}$,
$B_{xyz}=\frac{j_x+j_y-j_z+r-1}{2}$,
$m=\rm{max}(0,\frac{j_3+j_6-j_2-j_5}{2})$ and
$M=\rm{min}(B_{435},B_{165})$.
\end{prop}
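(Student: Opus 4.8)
The plan is to read the asserted equality as the \emph{recoupling} (associativity) identity of the category $\cat$ and to identify its coefficients with the quantum $6j$-symbols, exactly as encoded by relation (N~j) of \cite{CGP2}. By Proposition~\ref{TQFTtoHom} and Fact~\ref{Harebasis}, both $\{\CH_+,\CH_-\}$ and $\{\CI_+,\CI_-\}$ are bases of the same two-dimensional space $\Hom_\cat(\BI,F(V))$, so there is a unique invertible transition matrix; the content of the proposition is to identify its entries. The graph $\CH_\pm$ is the fusion tree that first couples $V_{\lambda_1}\otimes V_{\lambda_2}$ into the internal channel $\beta=\lambda_1+\lambda_2\pm1$ and then pairs it against $V_{\lambda_3}\otimes V_{\lambda_4}$ into $\BI$, while $\CI_\epsilon$ is the tree that first couples the $p_1$-leg with the $p_4$-leg into the channel $\gamma=\lambda_1+\lambda_4+\epsilon$. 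Passing from one coupling order to the other is precisely the associativity constraint of $\cat$, whose matrix entries in the bases of trivalent vertices are by definition the $6j$-symbols.

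Concretely, first I would apply the general recoupling relation (N~j) of \cite{CGP2} locally inside the small ball carrying the four legs and the internal edge (legitimate since all the defining relations of Fact~\ref{Harebasis} are local), matching the six arguments of $\sjtop{\cdot}{\cdot}{\cdot}{\cdot}{\cdot}{\cdot}$ with the four external colors $\lambda_1,\lambda_2,\lambda_3,\lambda_4$ and the two internal channels $\beta,\gamma$. Alternatively, one can verify the coefficients directly: pairing both sides against the dual basis element $\CI_\epsilon^{*}\in\Hom_\cat(F(V),\BI)$ via the non-degenerate pairing of Fact~\ref{Harebasis} collapses the networks into a tetrahedral network over a theta (bubble) network, and the resulting ratio is, by the normalization of the trivalent vertices, exactly $\qd(\lambda_1+\lambda_4+\epsilon)\,\sjtop{\lambda_1}{\lambda_2}{\lambda_1+\lambda_2\pm1}{\lambda_3}{-\lambda_4}{\lambda_1+\lambda_4+\epsilon}$. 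Finally I would check that the summation over $\epsilon=\pm1$ ranges exactly over the admissible channels $\gamma_\pm=\lambda_1+\lambda_4\pm1$, which for $r=2$ are the only admissible values since $\Hr=\{-1,+1\}$.

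The main obstacle is one of conventions rather than of mathematical substance, the underlying statement being the standard associativity of a ribbon fusion category. The delicate bookkeeping is to align Figure~\ref{Hspinegraph} with the vertex and orientation conventions fixed in \cite{CGP2}: in particular one must account for the appearance of $-\lambda_4$ (rather than $\lambda_4$) in the fifth slot of the symbol, which comes from the orientation reversal of the $p_4$-leg needed to present the recoupling in standard form, and for the quantum-dimension factor $\qd(\lambda_1+\lambda_4+\epsilon)$ arising from the theta-normalization that makes the trivalent vertices dual-orthogonal. Getting these dualities, signs and normalizations to match exactly is where all the care is required.
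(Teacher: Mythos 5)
Your proposal is correct and matches the paper's treatment: the paper gives no independent proof of this proposition but simply imports it as relation (N\,j) of \cite{CGP2}, i.e.\ the recoupling/associativity identity of the category $\cat$ applied locally to the graphs $\CH_\pm$ and $\CI_\epsilon$, which is exactly the first route you describe. Your additional verification of the coefficients via the non-degenerate pairing (tetrahedron over theta) and of the admissible range $\gamma_\pm=\lambda_1+\lambda_4\pm1$ for $r=2$ is a sound elaboration of the same idea rather than a different argument.
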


\begin{prop}[{\cite[Equation~(N~g)]{CGP2}}]\label{braidCGP}
The following equality holds in $\cV(\lambda_1,\ldots,\lambda_4)$:
\begin{equation*}
\begin{array}{c} \hspace{-1.3mm}
        \raisebox{-4pt}{\begin{tikzpicture}[scale=0.8]
\braid[style strands={1}{black}] s_1 ;
\draw (1,-1.5) arc (-180:-90:0.5) node[above] (S) { } arc (-90:0:0.5);
\draw[repmidarrow=50pt] (1,0) -- (1,0.2) node[near start,left]{$\lambda$};
\draw[repmidarrow=50pt] (2,0) -- (2,0.2) node[near start,left]{$\mu$};
\draw[repmidarrow=50pt] (1.5,-3) -- (S) node[midway,left]{$\beta$};
\end{tikzpicture}}
        \hspace{-1.9mm}\end{array}
=q^{\frac{-\lambda^2-\mu^2+\beta^2+(r-1)^2}{4}} \times
\begin{array}{c} \hspace{-1.3mm}
        \raisebox{-4pt}{\begin{tikzpicture}[scale=0.8]
\coordinate (HG) at (1,0.2);
\coordinate (HD) at (1.999,0.2);
\coordinate (BG) at (1,-1.5);
\coordinate (BD) at (1.999,-1.5);
\draw[repmidarrow=50pt] (BG)--(HG) node[midway,left]{$\mu$};
\draw[repmidarrow=50pt] (BD) -- (HD)node[midway,left]{$\lambda$};
\draw (BG) arc (-180:-90:0.5) node[above] (S) { } arc (-90:0:0.5);
\draw[repmidarrow=50pt] (1.5,-3) -- (S) node[midway,left]{$\beta$};
\end{tikzpicture}}
        \hspace{-1.9mm}\end{array}
\end{equation*}
where we suppose that the graphs are the same everywhere outside the small ball drawn here. 
\end{prop}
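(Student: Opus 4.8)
The plan is to read the displayed identity as a purely local statement about the braiding $c_{V_\lambda,V_\mu}$ and a single Clebsch--Gordan vertex, and to prove it by reducing to the action of the $R$-matrix $\RR$ on one highest weight vector. First I would observe that, locally, the left-hand graph is the composite of the braiding $c_{V_\lambda,V_\mu}\colon V_\lambda\otimes V_\mu\to V_\mu\otimes V_\lambda$ with a Clebsch--Gordan morphism $V_\mu\otimes V_\lambda\to V_\beta$, whereas the right-hand graph is that same Clebsch--Gordan morphism with no crossing. Since $\beta$ is admissible, $V_\beta$ is a typical, hence simple, summand of $V_\lambda\otimes V_\mu$ by the decomposition recalled in Remark \ref{Hr}, so by Schur's Lemma the space $\Hom_{\cat}(V_\lambda\otimes V_\mu,V_\beta)$ is one-dimensional. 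The braiding, being a $\Ubar$-module isomorphism, therefore carries the generator of $\Hom_{\cat}(V_\mu\otimes V_\lambda,V_\beta)$ to a scalar multiple of the generator of $\Hom_{\cat}(V_\lambda\otimes V_\mu,V_\beta)$; call this scalar $s_\beta=s_\beta(\lambda,\mu)$. It remains only to identify $s_\beta$.

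To compute $s_\beta$ I would evaluate on the highest weight vector $w_\beta\in V_\lambda\otimes V_\mu$ of the summand $V_\beta$, namely the vector of weight $\beta+r-1$ annihilated by $\Delta(E)$, and read off the coefficient of the corresponding highest weight vector of $V_\beta\subseteq V_\mu\otimes V_\lambda$ in $\tau\circ\RR(w_\beta)$. This is transparent for the top summand $\beta=\lambda+\mu+(r-1)$, where $w_\beta=e^{\lambda}_0\otimes e^{\mu}_0$: since $E.e^{\lambda}_0=0$, every term of $\RR$ with $n\ge 1$ kills $w_\beta$, only the prefactor $q^{H\otimes H}$ survives, and one reads the eigenvalue off the weights $\lambda+r-1$ and $\mu+r-1$, matching the claimed exponent for this value of $\beta$.

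For the remaining summands the direct evaluation is heavier, since the terms $E^{n}\otimes F^{n}$ of $\RR$ no longer vanish on $w_\beta$; here I would instead exploit the ribbon structure of $\cat$. Writing $\theta$ for the twist, which acts on the simple module $V_\nu$ by the scalar $\theta_\nu=q^{(\nu^2-(r-1)^2)/2}$ (computed from the ribbon element, i.e. the Casimir), the balancing axiom $\theta_{V_\lambda\otimes V_\mu}=(\theta_{V_\lambda}\otimes\theta_{V_\mu})\circ c_{V_\mu,V_\lambda}\circ c_{V_\lambda,V_\mu}$ restricts on the summand $V_\beta$ to the scalar identity
\[
s_\beta(\lambda,\mu)\,s_\beta(\mu,\lambda)=\frac{\theta_\beta}{\theta_\lambda\,\theta_\mu}=q^{\frac{\beta^2-\lambda^2-\mu^2+(r-1)^2}{2}}.
\]
Taking the square root reproduces exactly the factor $q^{\frac{-\lambda^2-\mu^2+\beta^2+(r-1)^2}{4}}$ of the statement.

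The main obstacle is that the double braiding only pins $s_\beta$ down up to sign, so the genuine work is selecting the correct square root uniformly in all parameters. I would resolve this by anchoring the branch at the top summand, where the explicit $\RR$-computation above fixes $s_\beta$ with no ambiguity, and then propagating the choice to the other summands by continuity: the scalar $s_\beta(\lambda,\mu)$ is an analytic function of $q^{\lambda}$ and $q^{\mu}$ on the locus of typical parameters, so a sign correct on an open set is correct everywhere. A final bookkeeping check is that the normalization of $q^{H\otimes H}$ in $\RR$ (Definition \ref{Ubarbraiding}) and the normalization of the scalar in the statement agree; with the conventions of \cite{CGP2} they do, which completes the identification.
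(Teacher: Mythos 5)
The paper does not actually prove this proposition: it is imported verbatim from \cite[Equation~(N~g)]{CGP2} as one of the relations (N~a--j) that the graphical calculus is declared to satisfy, so there is no in-paper argument to compare yours against. On its own terms, your opening reduction is correct and is the standard one: for typical parameters the Clebsch--Gordan decomposition of Remark \ref{Hr} makes $V_\beta$ a multiplicity-one simple summand, so $\Hom_{\cat}(V_\beta,V_\mu\otimes V_\lambda)$ is a line and $c_{V_\lambda,V_\mu}$ composed with the inclusion of $V_\beta$ is some scalar $s_\beta(\lambda,\mu)$ times the flipped inclusion. The issue is entirely in how you identify that scalar.

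The ribbon identity only determines the product $s_\beta(\lambda,\mu)\,s_\beta(\mu,\lambda)$; even granting the unproved symmetry $s_\beta(\lambda,\mu)=s_\beta(\mu,\lambda)$, this fixes $s_\beta$ only up to a locally constant sign. You propose to fix the sign by continuity from the top summand, but the admissible summands are $\beta=\lambda+\mu-h$ for the $r$ distinct values $h\in\Hr$, and for each fixed $h$ the scalar $s_{\lambda+\mu-h}(\lambda,\mu)$ is a separate analytic function of $(\lambda,\mu)$: varying the parameters never carries one branch into another, so a sign anchored at $h=-(r-1)$ says nothing about the other $r-1$ summands. This is not a formality --- in the classical $\slt$ analogue the braiding acts on the weight-$j$ summand of $V_{j_1}\otimes V_{j_2}$ by $(-1)^{j_1+j_2-j}$ times the Casimir ratio, so the sign genuinely varies with the summand and must be computed. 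Closing the gap requires evaluating $\tau\circ\RR$ on a highest weight vector of each summand (it suffices to track the coefficient of the extreme vector $e^\lambda_0\otimes e^\mu_k$, but note the terms $E^n\otimes F^n$ with $n\ge1$ do contribute to that coefficient), or simply citing the computation in \cite{CGP2}. A secondary point: with the paper's literal normalization $q^{H\otimes H}(v\otimes v')=q^{\lambda\lambda'}\,v\otimes v'$ in Definition \ref{Ubarbraiding}, your top-summand check yields the \emph{square} of the claimed scalar; the exponent with denominator $4$ is consistent with the $q^{H\otimes H/2}$ convention of \cite[Equation~(5)]{CGP1}, so the ``final bookkeeping check'' you defer is exactly where a factor of two has to be chased down rather than assumed.
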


\begin{rmk}\label{HisI}
We have the following equality, given by an obvious symmetry:
\begin{align*}
\begin{array}{c}\begin{tikzpicture}[scale=0.8]
\draw (0,0) circle (2);
\node[below left] (p1) at (225:2) {$p_1$};
\node[above left] (p2) at (135:2) {$p_2$};
\node[above right] (p3) at (45:2) {$p_3$};
\node[below right] (p4) at (-45:2) {$p_4$};
\coordinate (P1) at (225:2);
\coordinate (P2) at (135:2);
\coordinate (P3) at (45:2);
\coordinate (P4) at (-45:2);
\coordinate (B1) at (-0.5,0);
\coordinate (B2) at (0.5,0);
\draw[repmidarrow=50pt] (P1) -- (B1) node[midway,below right]{$\lambda_1$};
\draw[repmidarrow=50pt] (P2) -- (B1) node[midway,below left]{$\lambda_2$};
\draw[repmidarrow=50pt] (P3) -- (B2) node[midway,below right]{$\lambda_3$};
\draw[repmidarrow=50pt] (P4) -- (B2) node[midway,below left]{$\lambda_4$};
\draw[repmidarrow=50pt] (B1) -- (B2) node[midway,below]{$\beta$};
\end{tikzpicture} \end{array} = \begin{array}{c}\begin{tikzpicture}[scale=0.8]
\draw (0,0) circle (2);
\node[below left] (p1) at (225:2) {$p_1$};
\node[above left] (p2) at (135:2) {$p_4$};
\node[above right] (p3) at (45:2) {$p_3$};
\node[below right] (p4) at (-45:2) {$p_2$};
\coordinate (P1) at (225:2);
\coordinate (P2) at (135:2);
\coordinate (P3) at (45:2);
\coordinate (P4) at (-45:2);
\coordinate (A1) at (0,-0.5);
\coordinate (A2) at (0,0.5);
\draw[repmidarrow=50pt] (P1) -- (A1) node[midway,above left]{$\lambda_1$};
\draw[repmidarrow=50pt] (P2) -- (A2) node[midway,below left]{$\lambda_4$};
\draw[repmidarrow=50pt] (P3) -- (A2) node[midway,below right]{$\lambda_3$};
\draw[repmidarrow=50pt] (P4) -- (A1) node[midway,above right]{$\lambda_2$};
\draw[repmidarrow=50pt] (A1) -- (A2) node[midway,right]{$\beta$};
\end{tikzpicture}
\end{array}
\end{align*}
Using the notations introduced above, we get: $\CH(\lambda_1,\lambda_2,\lambda_3,\lambda_4,\beta)= \CI(\lambda_1,\lambda_4,\lambda_3,\lambda_4,\beta)$.
\end{rmk}

The last three properties allow us to compute the action in the $\CH$ basis. 
\begin{prop}
The action of standard generators $\sigma_1 , \sigma_2 , \sigma_3$ of $\Mcg(0,4)$ (Example \ref{4sphere}) over $\cV (\lambda_1 , \ldots , \lambda_4)$  in the $\CH$-basis are given by the following formulas:
\[\sigma_1 (\CH_{\pm}(\lambda_1,\lambda_2,\lambda_3,\lambda_4)) = q^{\frac{-\lambda_1^2-\lambda_2^2+\beta_{\pm}^2+(r-1)}{4}}\CH_{\pm}(\lambda_2,\lambda_1,\lambda_3,\lambda_4),\]
\[\sigma_3 (\CH_{\pm}(\lambda_1,\lambda_2,\lambda_3,\lambda_4)) = q^{\frac{-\lambda_3^2-\lambda_4^2+\beta_{\pm}^2+(r-1)}{4}} \CH_{\pm}(\lambda_1,\lambda_2,\lambda_3,\lambda_4)
\]
and:
\[
\sigma_2(\CH_{\pm}(\lambda_1, \lambda_2,\lambda_3,\lambda_4)) = f_{2,+}^{\pm}(\lambda_1,\lambda_3,\lambda_2,\lambda_4) \CH_+(\lambda_1,\lambda_3,\lambda_2,\lambda_4) + f_{2,-}^{\pm}(\lambda_1,\lambda_3,\lambda_2,\lambda_4) \CH_-(\lambda_1,\lambda_3,\lambda_2,\lambda_4)
\]
where:
\[ 
f_{2,+}^{\pm}(\lambda_1,\lambda_3,\lambda_2,\lambda_4) = \left( \sum_{\substack{\gamma = \lambda_1 +\\ \lambda_4 \pm 1}} \qd(b_+) \qd(\gamma) q^{\frac{-\lambda_2^2-\lambda_3^2+\gamma^2+(r-1)}{4}} \sjtop{\lambda_1}{\lambda_4}{\gamma}{\lambda_2}{-\lambda_3}{b_+} \sjtop{\lambda_1}{\lambda_2}{\beta_{\pm}}{\lambda_3}{-\lambda_4}{\gamma} \right)
\]
\[ f_{2,-}^{\pm}(\lambda_1,\lambda_3,\lambda_2,\lambda_4) = \left( \sum_{\substack{\gamma = \lambda_1 + \\ \lambda_4 \pm 1}} \qd(b_-) \qd(\gamma) q^{\frac{-\lambda_2^2-\lambda_3^2+\gamma^2+(r-1)}{4}} \sjtop{\lambda_1}{\lambda_4}{\gamma}{\lambda_2}{-\lambda_3}{b_-} \sjtop{\lambda_1}{\lambda_2}{\beta_{\pm}}{\lambda_3}{-\lambda_4}{\gamma} \right)
\]
\end{prop}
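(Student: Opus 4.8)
The plan is to compute the action of each $\sigma_i$ by gluing, to the trivalent graph representing a basis vector, the elementary braid that $\sigma_i$ contributes to the mapping cylinder (as explained in the discussion preceding the statement), and then to reduce the resulting picture back to the $\CH$-basis using only the three local moves of Propositions \ref{ClebschGordan}, \ref{HtoI} and \ref{braidCGP}. The governing observation is that $\sigma_1$ (resp. $\sigma_3$) is the half twist exchanging $p_1,p_2$ (resp. $p_3,p_4$), and in $\CH(\lambda_1,\lambda_2,\lambda_3,\lambda_4,\beta)$ these two punctures are attached to a single trivalent node (the left node for $p_1,p_2$, the right node for $p_3,p_4$). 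The half twist therefore inserts exactly one crossing of the two strands just below that node, which is precisely the configuration of Proposition \ref{braidCGP}.

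First I would dispatch $\sigma_1$ and $\sigma_3$. Applying Proposition \ref{braidCGP} with $(\lambda,\mu,\beta)=(\lambda_1,\lambda_2,\beta_{\pm})$ produces the scalar $q^{\frac{-\lambda_1^2-\lambda_2^2+\beta_{\pm}^2+(r-1)^2}{4}}$ together with the graph in which the two legs of the left node have been exchanged, namely $\CH_{\pm}(\lambda_2,\lambda_1,\lambda_3,\lambda_4)$; since we work at $r=2$ one has $(r-1)^2=(r-1)=1$, matching the stated exponent, and the spine colour $\beta_{\pm}=\lambda_1+\lambda_2\pm1$ is preserved so the sign $\pm$ is unchanged. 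The case of $\sigma_3$ is identical with $(\lambda,\mu,\beta)=(\lambda_3,\lambda_4,\beta_{\pm})$ at the right node, and the output $\CH_{\pm}(\lambda_1,\lambda_2,\lambda_4,\lambda_3)$ is identified with $\CH_{\pm}(\lambda_1,\lambda_2,\lambda_3,\lambda_4)$ because both $\beta_{\pm}$ and the admissibility condition \ref{M04basiscondition2} are symmetric in $\lambda_3,\lambda_4$. This yields the two monomial formulas.

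The core of the statement is $\sigma_2$, the half twist exchanging $p_2$ and $p_3$, which lie on opposite nodes of the $\CH$-graph so that Proposition \ref{braidCGP} cannot be applied directly. They do however meet at a common node in the $\CI$-graph (the upper node of Figure \ref{Hspinegraph}), so I would sandwich the braiding between the two changes of basis of Proposition \ref{HtoI}. Concretely: (i) expand $\CH_{\pm}$ in the $\CI$-basis, which by Proposition \ref{HtoI} contributes $\qd(\gamma)\,\sjtop{\lambda_1}{\lambda_2}{\beta_{\pm}}{\lambda_3}{-\lambda_4}{\gamma}$ with $\gamma$ ranging over $\lambda_1+\lambda_4\pm1$; (ii) apply Proposition \ref{braidCGP} to each $\CI_{\gamma}$ with $(\lambda,\mu,\beta)=(\lambda_2,\lambda_3,\gamma)$, producing the scalar $q^{\frac{-\lambda_2^2-\lambda_3^2+\gamma^2+(r-1)}{4}}$ and exchanging $\lambda_2,\lambda_3$; (iii) expand the resulting $\CI_{\gamma}(\lambda_1,\lambda_3,\lambda_2,\lambda_4)$ back in the $\CH$-basis of the permuted colours, which contributes the second $6j$-symbol $\sjtop{\lambda_1}{\lambda_4}{\gamma}{\lambda_2}{-\lambda_3}{b_{\pm}}$ and the weight $\qd(b_{\pm})$, where $b_{\pm}=\lambda_1+\lambda_3\pm1$ is the spine of the target $\CH_{\pm}(\lambda_1,\lambda_3,\lambda_2,\lambda_4)$. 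Reading off the coefficients of $\CH_{+}$ and of $\CH_{-}$ (which form a basis by Fact \ref{Harebasis}) then produces exactly the sums $f_{2,+}^{\pm}$ and $f_{2,-}^{\pm}$.

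I expect step (iii) to be the main obstacle: one must justify that the $\CI\to\CH$ expansion is again governed by the $6j$-symbol of Proposition \ref{HtoI} with its arguments permuted as dictated by the $p_2\leftrightarrow p_4$ symmetry of Remark \ref{HisI}, so that precisely $\sjtop{\lambda_1}{\lambda_4}{\gamma}{\lambda_2}{-\lambda_3}{b_{\pm}}$ and $\qd(b_{\pm})$ appear, and that the product of the two $6j$-factors with the braiding scalar recombines into the single sum over $\gamma\in\{\lambda_1+\lambda_4\pm1\}$ displayed in the statement. Because $\Hr=\{-1,+1\}$ has only two elements at $r=2$, each sum has just two terms, so once the orientation convention for the crossing and the sign convention of the $6j$-symbol are pinned down to agree with Propositions \ref{HtoI} and \ref{braidCGP}, the identity reduces to a finite and essentially mechanical verification.
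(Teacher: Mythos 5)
Your proposal is correct and follows essentially the same route as the paper: $\sigma_1$ and $\sigma_3$ are handled by a single application of Proposition \ref{braidCGP} at the relevant node, and $\sigma_2$ by conjugating the braiding with the $\CH\leftrightarrow\CI$ change of basis of Proposition \ref{HtoI} (the paper nominally cites Proposition \ref{ClebschGordan} for the first expansion but writes down exactly the $6j$-coefficients of Proposition \ref{HtoI}, as you do). Your step (iii), which you flag as the main obstacle, is resolved in the paper exactly as you anticipate, by a second application of Proposition \ref{HtoI} with permuted colours.
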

\begin{proof}
The idea to get the images of half-Dehn twists by the TQFT is to apply the twist to the graphs $\CH_{\pm}(\lambda_1,\lambda_2,\lambda_3,\lambda_4)$ corresponding to basis vectors. 

\begin{rmk}\label{permutedpuncturesofsphere}
Suppose that the twist involves a permutation $\tau$ of the punctures. Let $\tau(\lambda_1 , \lambda_2 , \lambda_3 , \lambda_4)$ be $(\lambda_{\tau(1)} , \ldots , \lambda_{\tau(4)})$ for $\tau \in \Sk_4$. We want to express the obtained graph in terms of the vectors $\CH_{\pm}(\tau(\lambda_1,\lambda_3,\lambda_2,\lambda_4))$ which yield a basis of the TQFT space $\cV(\tau(\lambda_1,\lambda_2,\lambda_3,\lambda_4))$ associated to the punctured sphere with permuted punctures.
\end{rmk}

This is done using the rules presented above. We use notations of Proposition \ref{presentationSphere} for the generators of $\Mcg(0,4)$. As $\sigma_1$ refers to the half-Dehn twist along $\left[ p_1, p_2 \right]$, we get that: 
\[\sigma_1 (\CH_{\pm}(\lambda_1,\lambda_2,\lambda_3,\lambda_4)) = q^{\frac{-\lambda_1^2-\lambda_2^2+\beta_{\pm}^2+(r-1)}{4}}\CH_{\pm}(\lambda_2,\lambda_1,\lambda_3,\lambda_4)\] with $\beta_{\pm}$ defined above.

In terms of graphs, the latter is illustrated below.

\begin{equation*}
\sigma_1(\CH(\lambda_1,\lambda_2,\lambda_3,\lambda_4,\beta)) =
\begin{array}{c}
\begin{tikzpicture}[scale=0.8]
\node[below left] (p1) at (225:2) {$p_2$};
\node[above left] (p2) at (135:2) {$p_1$};
\node[above right] (p3) at (45:2) {$p_3$};
\node[below right] (p4) at (-45:2) {$p_4$};
\coordinate (P1) at (225:2);
\coordinate (P2) at (135:2);
\coordinate (P3) at (45:2);
\coordinate (P4) at (-45:2);
\coordinate (B1) at (-0.5,0);
\coordinate (B2) at (0.5,0);
\draw[repmidarrow=50pt] (P3) -- (B2) node[midway,above left]{$\lambda_3$};
\draw[repmidarrow=50pt] (P4) -- (B2) node[midway,below left]{$\lambda_4$};
\draw[repmidarrow=50pt] (B1) -- (B2) node[midway,above]{$\beta$};
\draw[repmidarrow=50pt] (P1) -- (145:0.8) node[midway,left]{$\lambda_2$};
\draw[cross,repmidarrow=50pt] (P2) -- (215:0.8) node[midway,left]{$\lambda_1$};
\draw (145:0.8) -- (B1);
\draw (215:0.8) -- (B1);
\draw (0,0) circle (2);
\end{tikzpicture}
\end{array}
= q^{\frac{-\lambda_1^2-\lambda_2^2+\beta^2+(r-1)^2}{4}}
\begin{array}{c} \hspace{-1.3mm}
        \raisebox{-4pt}{\begin{tikzpicture}[scale=0.8]
\draw (0,0) circle (2);
\node[below left] (p1) at (225:2) {$p_2$};
\node[above left] (p2) at (135:2) {$p_1$};
\node[above right] (p3) at (45:2) {$p_3$};
\node[below right] (p4) at (-45:2) {$p_4$};
\coordinate (P1) at (225:2);
\coordinate (P2) at (135:2);
\coordinate (P3) at (45:2);
\coordinate (P4) at (-45:2);
\coordinate (B1) at (-0.5,0);
\coordinate (B2) at (0.5,0);
\draw[repmidarrow=50pt] (P1) -- (B1) node[midway,below right]{$\lambda_1$};
\draw[repmidarrow=50pt] (P2) -- (B1) node[midway,above right]{$\lambda_2$};
\draw[repmidarrow=50pt] (P3) -- (B2) node[midway,above left]{$\lambda_3$};
\draw[repmidarrow=50pt] (P4) -- (B2) node[midway,below left]{$\lambda_4$};
\draw[repmidarrow=50pt] (B1) -- (B2) node[midway,above]{$\beta$};
\end{tikzpicture}}
\hspace{-1.9mm}\end{array}
\end{equation*}
which is straightforward from Proposition \ref{braidCGP}. 

The same works for $\sigma_4$ so that one obtains:
\[\sigma_4 (\CH_{\pm}(\lambda_1,\lambda_2,\lambda_3,\lambda_4)) = q^{\frac{-\lambda_3^2-\lambda_4^2+\beta_{\pm}^2+(r-1)}{4}} \CH_{\pm}(\lambda_1,\lambda_2,\lambda_3,\lambda_4).
\]

To compute $\sigma_2$ which corresponds to the half Dehn-twist along $\left[ p_2 , p_3 \right]$, there is a little more work. The shortest way to express $\sigma_2(\CH_{\pm}(\lambda_1,\lambda_2,\lambda_3,\lambda_4))$ in terms of graphs $\CH_{\pm}(\lambda_1,\lambda_3,\lambda_2,\lambda_4)$ is to pass through the $\CI$ graphs as follows:

\begin{align*}
\sigma_2(\CH(\lambda_1,\lambda_2,\lambda_3,\lambda_4,\beta)) & = \begin{array}{c}\begin{tikzpicture}[scale=0.8]
\node[below left] (p1) at (225:2) {$p_1$};
\node[above left] (p2) at (135:2) {$p_3$};
\node[above right] (p3) at (45:2) {$p_2$};
\node[below right] (p4) at (-45:2) {$p_4$};
\coordinate (P1) at (225:2);
\coordinate (P2) at (135:2);
\coordinate (P3) at (45:2);
\coordinate (P4) at (-45:2);
\coordinate (B1) at (-0.5,0);
\coordinate (B2) at (0.5,0);
\draw[repmidarrow=50pt] (P1) -- (B1) node[midway,below right]{$\lambda_1$};
\draw[repmidarrow=50pt] (P2) -- (B2) node[midway,below left]{$\lambda_3$};
\draw[cross,repmidarrow=50pt] (P3) -- (B1) node[midway,below right]{$\lambda_2$};
\draw[repmidarrow=50pt] (P4) -- (B2) node[midway,below left]{$\lambda_4$};
\draw[repmidarrow=50pt] (B1) -- (B2) node[midway,below]{$\beta$};
\draw (0,0) circle (2);
\end{tikzpicture} \end{array} \\
 & = \sum_{\gamma=\lambda_1+\lambda_4 \pm 1} \qd(\gamma) \sjtop{\lambda_1}{\lambda_2}{\beta}{\lambda_3}{-\lambda_4}{\gamma} \times \begin{array}{c}\begin{tikzpicture}[scale=0.8]
\node[below left] (p1) at (225:2) {$p_1$};
\node[above left] (p2) at (135:2) {$p_3$};
\node[above right] (p3) at (45:2) {$p_2$};
\node[below right] (p4) at (-45:2) {$p_4$};
\coordinate (P1) at (225:2);
\coordinate (P2) at (135:2);
\coordinate (P3) at (45:2);
\coordinate (P4) at (-45:2);
\coordinate (A1) at (0,-0.5);
\coordinate (A2) at (0,0.5);
\draw[repmidarrow=50pt] (P1) -- (A1) node[midway,above left]{$\lambda_1$};
\draw[repmidarrow=50pt] (P4) -- (A1) node[midway,above right]{$\lambda_4$};
\draw[repmidarrow=50pt] (A1) -- (A2) node[midway,right]{$\gamma$};
\draw[repmidarrow=50pt] (P2) -- (55:0.8) node[midway,below left]{$\lambda_3$};
\draw[cross,repmidarrow=50pt] (P3) -- (125:0.8) node[midway,below right]{$\lambda_2$};
\draw (A2) -- (125:0.8);
\draw (A2) -- (55:0.8);
\draw (0,0) circle (2);
\end{tikzpicture}
\end{array} \\
& = \sum_{\gamma=\lambda_1+\lambda_4 \pm 1} q^{\frac{-\lambda_2^2-\lambda_3^2+\gamma^2+(r-1)}{4}} \qd(\gamma) \sjtop{\lambda_1}{\lambda_2}{\beta}{\lambda_3}{-\lambda_4}{\gamma} \times \begin{array}{c}\begin{tikzpicture}[scale=0.8]
\node[below left] (p1) at (225:2) {$p_1$};
\node[above left] (p2) at (135:2) {$p_3$};
\node[above right] (p3) at (45:2) {$p_2$};
\node[below right] (p4) at (-45:2) {$p_4$};
\coordinate (P1) at (225:2);
\coordinate (P2) at (135:2);
\coordinate (P3) at (45:2);
\coordinate (P4) at (-45:2);
\coordinate (A1) at (0,-0.5);
\coordinate (A2) at (0,0.5);
\draw[repmidarrow=50pt] (P1) -- (A1) node[midway,above left]{$\lambda_1$};
\draw[repmidarrow=50pt] (P4) -- (A1) node[midway,above right]{$\lambda_4$};
\draw[repmidarrow=50pt] (A1) -- (A2) node[midway,right]{$\gamma$};
\draw[repmidarrow=50pt] (P2) -- (A2) node[midway,below left]{$\lambda_3$};
\draw[cross,repmidarrow=50pt] (P3) -- (A2) node[midway,below right]{$\lambda_2$};
\draw (0,0) circle (2);
\end{tikzpicture}
\end{array}
\end{align*}

The second equality comes from Proposition \ref{ClebschGordan} while the last one from Proposition \ref{braidCGP}. The last graph must be expressed then back in terms of  $\CH_{\pm}(\lambda_1,\lambda_3,\lambda_2,\lambda_4)$ using Proposition \ref{HtoI}. Finally, we get the following expression for $\sigma_2 (\CH(\lambda_1,\lambda_2,\lambda_3,\lambda_4,\beta))$:
\begin{align*}
 \sum_{\substack{\gamma=\lambda_1+ \lambda_4 \pm 1}} q^{\frac{-\lambda_2^2-\lambda_3^2+\gamma^2+(r-1)}{4}} \qd(\gamma) \sjtop{\lambda_1}{\lambda_2}{\beta}{\lambda_3}{-\lambda_4}{\gamma} \left(\sum_{\substack{b=\lambda_1 \\ +\lambda_3 \pm 1}} \qd(b) \sjtop{\lambda_1}{\lambda_4}{\gamma}{\lambda_2}{-\lambda_3}{b} \begin{array}{c} \begin{tikzpicture}[scale=0.7]
\draw (0,0) circle (2);
\node[below left] (p1) at (225:2) {$p_1$};
\node[above left] (p2) at (135:2) {$p_3$};
\node[above right] (p3) at (45:2) {$p_2$};
\node[below right] (p4) at (-45:2) {$p_4$};
\coordinate (P1) at (225:2);
\coordinate (P2) at (135:2);
\coordinate (P3) at (45:2);
\coordinate (P4) at (-45:2);
\coordinate (B1) at (-0.5,0);
\coordinate (B2) at (0.5,0);
\draw[repmidarrow=50pt] (P1) -- (B1) node[midway,below right]{$\lambda_1$};
\draw[repmidarrow=50pt] (P2) -- (B1) node[midway,above right]{$\lambda_2$};
\draw[repmidarrow=50pt] (P3) -- (B2) node[midway,above left]{$\lambda_3$};
\draw[repmidarrow=50pt] (P4) -- (B2) node[midway,below left]{$\lambda_4$};
\draw[repmidarrow=50pt] (B1) -- (B2) node[midway,above]{$b$};
\end{tikzpicture}
\end{array} \right)
\end{align*}

We reorganize terms in order to get a more readable formula for the image of both vectors $\CH_+$ and $\CH_-$ expressed in the basis we were looking for:

\[
\begin{aligned}
\begin{array}{l}
\sigma_2(\CH_{\pm}(\lambda_1, \lambda_2,\lambda_3,\lambda_4)) = \\ 
= \left( \sum_{\substack{\gamma = \lambda_1 +\\ \lambda_4 \pm 1}} \qd(b_+) \qd(\gamma) q^{\frac{-\lambda_2^2-\lambda_3^2+\gamma^2+(r-1)}{4}} \sjtop{\lambda_1}{\lambda_4}{\gamma}{\lambda_2}{-\lambda_3}{b_+} \sjtop{\lambda_1}{\lambda_2}{\beta_{\pm}}{\lambda_3}{-\lambda_4}{\gamma} \right) \CH_+(\lambda_1,\lambda_3,\lambda_2,\lambda_4) \\

+ \left( \sum_{\substack{\gamma = \lambda_1 + \\ \lambda_4 \pm 1}} \qd(b_-) \qd(\gamma) q^{\frac{-\lambda_2^2-\lambda_3^2+\gamma^2+(r-1)}{4}} \sjtop{\lambda_1}{\lambda_4}{\gamma}{\lambda_2}{-\lambda_3}{b_-} \sjtop{\lambda_1}{\lambda_2}{\beta_{\pm}}{\lambda_3}{-\lambda_4}{\gamma} \right) \CH_-(\lambda_1,\lambda_3,\lambda_2,\lambda_4)\\

= f_{2,+}^{\pm}(\lambda_1,\lambda_3,\lambda_2,\lambda_4) \CH_+(\lambda_1,\lambda_3,\lambda_2,\lambda_4) + f_{2,-}^{\pm}(\lambda_1,\lambda_3,\lambda_2,\lambda_4) \CH_-(\lambda_1,\lambda_3,\lambda_2,\lambda_4).
\end{array}
\end{aligned}
\]

\end{proof}

Using these descriptions of the action of $\sigma_i$, for $i=1,2,3$, over $\cV (\lambda_1,\lambda_2,\lambda_3,\lambda_4)$ we associate to it an operator in $\PGL \left( \cV (\lambda_1,\lambda_2,\lambda_3,\lambda_4) ,\cV \left( \tau (\lambda_1,\lambda_2,\lambda_3,\lambda_4)\right)\right)$ with $\tau = \perm(\sigma_i) = (i,i+1) \in \fS_4$ permuting variables the way $\sigma_i$ permutes punctures. As we are not dealing with endomorphisms, we don't have a representation of the mapping class group, but a projective one over $\cV (\lambda_1,\lambda_2,\lambda_3,\lambda_4) \otimes \mathbb{C} \left[ \fS_4 \right] $. The latter is the induced representation of the pure mapping class group (consisting in the Torelli group made of mapping classes not permuting punctures), and uses a basis $\left\lbrace \CH_{\pm}(\tau(\lambda_1,\lambda_2,\lambda_3,\lambda_4)), \tau \in \fS_4 \right\rbrace$ consisting in graphs $\CH_{\pm}$ with colors permuted by permutations of $\fS_4$.

\begin{defn}\label{quantumrepM04}
We define the following representation:
\[
\Phi: \left\lbrace 
\begin{array}{rcl}
\Mcg(0,4) & \to & \PGL (\cV (\lambda_1,\lambda_2,\lambda_3,\lambda_4)\otimes \mathbb{C} \left[ \fS_4 \right])\\
\sigma_i & \mapsto & \Phi(\sigma_i)
\end{array} \right.
\]
where:
\begin{align*}
\Phi(\sigma_i):\CH_{\pm}(\tau(\lambda_1,\lambda_2,\lambda_3,\lambda_4) \mapsto & f_{i,+}^{\pm}(\tau(\lambda_1,\lambda_3,\lambda_2,\lambda_4))\CH_+\left((i,i+1)\circ\tau \left(\lambda_1,\lambda_2,\lambda_3,\lambda_4\right) \right)\\
& +f_{i,-}^{\pm}(\tau(\lambda_1,\lambda_3,\lambda_2,\lambda_4))\CH_-\left((i,i+1)\circ\tau \left(\lambda_1,\lambda_2,\lambda_3,\lambda_4\right) \right).
\end{align*}
\end{defn}

\begin{rmk}[Normalization]\label{projectiveNormalization}

As we are considering a projective action, we are going to normalize the representation canceling some factors. We will simplify quadratic terms interpreted as framing information by the $\RT$-functor and that we don't take into consideration in this work. All the quadratic terms in $\lambda_i$, for $i=1,2,3,4$, appear as factors of the operators. For instance in the expression of $\sigma_1(\CH(\lambda_1,\lambda_2,\lambda_3,\lambda_4,\beta))$, there is only $\beta^2$ depending on the basis vector, so that the associated operator has $q^{\frac{-\lambda_1^2-\lambda_2^2+(r-1)+(\lambda_1+\lambda_2)^2+1}{2}}$ as factor. For $\sigma_2$ we see that in the coefficients $q^{\frac{-\lambda_2^2-\lambda_3^2+\gamma^2+(r-1)}{2}}$, there is only $\gamma^2$ that varies with the basis vector. After developing both possible expressions for $\gamma$, we remark that  $q^{\frac{-\lambda_1^2-\lambda_4^2+(r-1)+(\lambda_1+\lambda_4)^2+1}{2}}$ factors the expression. For $\sigma_3$, we factorize by $q^{\frac{-\lambda_3^2-\lambda_4^2+(r-1)+(\lambda_3+\lambda_4)^2+1}{2}}$, so that we modify slightly the representation getting rid of these factor coefficients in the expression of matrices of the corresponding operators.

After the computation of the $6j$-symbols, we get matrices at level $r=2$, replacing $\lambda_4$ by $-(\lambda_1 + \lambda_2 + \lambda_3)$. We make the change of variables: $A_i= q^{2\lambda_i}$ for $i=1,2,3$ and we end up with the following expressions:
\begin{align}\label{matrixforM04quantum}
M_1(A_1,A_2,A_3)=\Matrice_{\cB_{(1)},\cB_{(1,2)}}\Phi(\sigma_1) & = \begin{pmatrix} \sqrt{A_1 A_2} & 0 \\ 0 & \frac{1}{\sqrt{A_1 A_2}} \end{pmatrix} \\\label{matrixforM04quantum2}
M_2(A_1,A_2,A_3)=\Matrice_{\cB_{(1)},\cB_{(2,3)}}\Phi(\sigma_2) & = (A_2^2 A_3^2-1)\begin{pmatrix}  \frac{-(1+A_3^2)}{A_1 A_2 A_3^2} & \frac{-(1+A_1^2)}{A_1 A_3} \\
\frac{A_1^2 A_2^2 A_3^2 + 1}{A_1 A_2^2 A_3} & \frac{(A_2^2+1)A_1}{A_2}
\end{pmatrix}
\end{align}
where $\cB_{\tau}$ designates the basis $\{ \CH_{\pm}(\tau(\lambda_1,\lambda_2,\lambda_3,\lambda_4))\}$ for $\tau \in \fS_4$, and $\Matrice_{B,B'}$ is the block corresponding to the image of $B$ in $B'$.

\end{rmk}

\subsection{Faithfulness}\label{faithfulness}

From now on, we restrict to the unicolored case, with $A_1 =A_2 = A_3 = A$, then $\Matrice_{\cB_{(1)},\cB_{(2,3)}}\Phi(\sigma_1)=\Matrice_{\cB_{(1)},\cB_{(3,4)}}\Phi(\sigma_3)$. 

We recall the exact sequence giving the homological representation of $\Mcg(0,4)$ presented in Example \ref{4sphere} together with the arrows associated with the representations involved here:

\begin{align}\label{AMUexact}
\begin{tikzpicture}
\tikzset{node distance=1.5cm, auto}
\node (A1) {1};
\node (A2) [right of=A1,,xshift=1.5cm] {$\BZ/2 \times \BZ/2$};
\node (A3) [right of=A2,,xshift=1.5cm] {$\Mcg(0,4) = G\ltimes N$};
\node (A4) [right of=A3,,xshift=1.9cm] {$\PSL(2,\BZ)$};
\node (A5) [right of=A4,,xshift=1.5cm] {$1$};
\node (B1) [below of=A1] { };
\node (B2) [right of=B1,,xshift=1.5cm] { };
\node (B3) [right of=B2,,xshift=1.5cm] {$\PGL(\cV)$};
\node (B4) [right of=B3,,xshift=1.5cm] { };
\node (B5) [right of=B4,,xshift=1.5cm] { };
\draw[->] (A1) to node {  } (A2);
\draw[->] (A2) to node { } (A3);
\draw[->] (A3) to node { $\iota$ } (A4);
\draw[->] (A4) to node {  } (A5);
\draw[->] (A3) to node {$\Phi$} (B3);
\end{tikzpicture}  
\end{align}
where $G$ is the subgroup generated by $\sigma_1,\sigma_2$ and $N$ the one generated by $\alpha= \sigma_1 \sigma_3^{-1}$ and $\beta=\sigma_2 \sigma_1 \sigma_3^{-1} \sigma_2^{-1}$. The space $\cV$ designates the TQFT vector space associated with $S_4$ colored with $\lambda_1=\lambda_2=\lambda_3=\lambda$, last color still being the opposite of the sum of the others, tensored with permutations.

The following theorem concerns the restriction of this action to the group $G=\langle \sigma_1 , \sigma_2 \rangle$ which is isomorphic to $\PSL(2,\BZ)$.

\begin{prop}\label{TQFTfaithfulonG}
The representation $\Phi_{|G}$ of $G$ provides a faithful representation of $\PSL(2,\BZ)$. 
\end{prop}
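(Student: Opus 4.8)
The plan is to exploit the explicit matrices $M_1,M_2$ of \eqref{matrixforM04quantum}--\eqref{matrixforM04quantum2} (in the unicolored specialization $A_1=A_2=A_3=A$, writing $A=q^{2\lambda}$) together with the isomorphism $G\cong\PSL(2,\BZ)$ from Example \ref{4sphere}. Recall $\PSL(2,\BZ)\cong \BZ/2\BZ \ast \BZ/3\BZ$, the order-$3$ free factor being generated by the class of $\sigma_1\sigma_2$ (matching $S=AB$, with $[S]^3=\Id$) and the order-$2$ factor by the class of $\sigma_1\sigma_2\sigma_1$ (matching $T=ABA$, with $[T]^2=\Id$). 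Thus $\Phi_{|G}$ is faithful iff the induced projective representation of $\BZ/2\BZ \ast \BZ/3\BZ$ on $\cV\otimes\BC[\fS_4]$ is injective, and the whole task is to control this injectivity through the two $2\times2$ blocks.

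First I would dispose of the elements that permute the punctures. The homomorphism $\perm\colon G\to \fS_3$, $\sigma_1\mapsto(1\,2)$, $\sigma_2\mapsto(2\,3)$, is onto, and $\Phi$ is induced from the pure subgroup $P=\ker(\perm_{|G})$: an operator $\Phi(g)$ permutes the $\BC[\fS_4]$--summands according to $\perm(g)$ while acting by the blocks $M_i$ along the way. Hence any $g$ with $\perm(g)\neq e$ moves a summand off itself and cannot be scalar, so $\ker \Phi_{|G}\subseteq P$. It therefore suffices to prove that $\Phi_{|P}$ is faithful; on the $\tau=e$ summand this restriction is the honest monodromy homomorphism $P\to \PGL(2,\BC)$ sending each pure word to the corresponding product of the matrices $M_1^{\pm1},M_2^{\pm1}$.

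The group $P$ has index $6$ in $\PSL(2,\BZ)$ and is torsion-free (every nontrivial power of a conjugate of $S$ or $T$ has nontrivial image in $\fS_3$), hence free; an Euler characteristic count gives $P\cong F_2$. So I am reduced to showing that the images of a free generating pair of $P$ generate a free group of rank $2$ in $\PGL(2,\BC)$, which I would establish by the ping-pong lemma. The clean device is to treat $A$ as a variable and pass to the field $\BC((A))$ with its $A$-adic valuation: then $M_1=\operatorname{diag}(A,A^{-1})$ is a hyperbolic isometry of the associated Bass--Serre tree, the entries of $M_2$ have controlled valuations, and the two monodromy generators become hyperbolic elements whose axes one checks to be in general position, yielding the table-tennis inequalities. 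Equivalently, since a relation valid over $\BC(A)$ persists under every specialization, it is enough to exhibit a single value $A=a_0$ at which the two $2\times2$ monodromy matrices are manifestly a free (Schottky) pair.

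The main obstacle is this last ping-pong step: one must first pin down an explicit free generating set of $P$ (for instance via Reidemeister--Schreier applied to $\perm_{|G}$), compute the corresponding reduced words in $M_1^{\pm1},M_2^{\pm1}$, and then verify the free-pair criterion uniformly --- either the tree ping-pong over $\BC((A))$ or a Schottky configuration at a convenient $a_0$ --- while recording the genericity hypotheses on $\lambda$ (equivalently on $A$) that keep the axes, or the attracting and repelling sets, disjoint. Once the monodromy pair is seen to be free, faithfulness of $\Phi_{|P}$, and hence of $\Phi_{|G}$, follows, and the identification of the image with $\PSL(2,\BZ)$ is immediate.
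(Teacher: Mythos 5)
Your reduction to the pure subgroup $P=\ker(\perm_{|G})\cong F_2$ is sound within the induced-representation framework of Definition \ref{quantumrepM04}, and your genericity logic (a word that dies identically over $\BC(A)$ dies at every specialization, so one good specialization suffices) is exactly the mechanism the paper also exploits. But there is a genuine gap: the entire content of the statement is concentrated in the step you yourself label ``the main obstacle'' --- producing an explicit free generating pair of $P$ via Reidemeister--Schreier, computing the corresponding reduced words in $M_1^{\pm1},M_2^{\pm1}$, and verifying a ping-pong or Schottky criterion --- and none of this is carried out. Until those inequalities (or the axis computation on the Bruhat--Tits tree of $\BC((A))$) are actually checked, you have not exhibited \emph{any} value $a_0$ at which the pair is free, so nothing is proved. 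Note in particular that the elements to be ping-ponged are not $M_1,M_2$ themselves but the index-$6$ words representing generators of $P$, so ``controlled valuations of the entries of $M_2$'' is not yet evidence that the relevant axes are in general position.

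The paper's proof sidesteps all of this: it computes $QS=\Phi(\sigma_1\sigma_2)$ and $QT=\Phi(\sigma_1\sigma_2\sigma_1)$, checks $QS^3=QT^2=-\Id$, and exhibits an explicit conjugation after which the pair specializes at $A=1$ to the standard matrices $S,T$, i.e.\ to the tautological (hence faithful) representation of all of $\PSL(2,\BZ)$ rather than merely of its index-$6$ free subgroup; analyticity in $A$ plus Baire then gives generic faithfulness. If you want to keep your route, the cheapest repair is to borrow that specialization: after the paper's conjugation the images of your free generators of $P$ at $A=1$ form a free basis of the corresponding index-$6$ subgroup of $\PSL(2,\BZ)$, which is free for classical reasons --- but at that point you have reproduced the paper's argument with extra steps. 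In either version you should also state explicitly that the conclusion is \emph{generic} faithfulness in $A$ (equivalently in $\lambda$), since at special values such as $A=\pm1,0$ the representation degenerates.
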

\begin{proof}
We recall the notations and framework. The morphism $\Phi$ is the quantum representation of $\Mcg (0,4)$ built on the sphere with $3$ marked points colored by $\lambda$, the last one by $-3\lambda$, with $\lambda$ a generic color of the category $\cat$, namely $\lambda \in \left( \BC \backslash \BZ \right) \bigcup r\BZ$ and $A=q^{-2\lambda}$. 

Let $b$ be a mapping class in $G$, we have that $\perm(b)$ is contained in the permutations that stabilized the last point, namely $p_4$ colored by $-3\lambda$. Then $\Phi(b) (\cV(\lambda, \lambda, \lambda, -3\lambda)\otimes \BC\left[ () \right]) \subset \left( \cV(\lambda, \lambda, \lambda, -3\lambda)\otimes \BC\left[ () \right] \right)$, if $()$ designates the identity permutation. This shows that for $\Phi_{|G}$ we can restrict ourselves to an action in $\PGL\left((\cV(\lambda,\lambda,\lambda,-3\lambda)\right)=\PGL\left(\cV\right)$ so that we still get a representation of $G$. We end the proof considering this representation.

We are going to work with the $s,t$ generators of $\PSL (2,\BZ)$, which are the images of $\sigma_1 \sigma_2$ and $\sigma_1 \sigma_2 \sigma_1$ respectively under the morphism $\iota$. Let $QS$ and $QT$ be their images under the quantum representation:
\[
\begin{aligned}
QS(A)= \Matrice_{\cV=\Vect(\CH_{\pm}(\lambda,\lambda,\lambda,-3\lambda))} (\Phi (\sigma_1 \sigma_2)) = \frac{1}{A^2-1} \begin{pmatrix} -1 & -A^2 \\ \frac{(A^2-1)^2}{A^2} + 1 & A^2  \end{pmatrix}
\\
QT(A)= \Matrice_{\cV=\Vect(\CH_{\pm}(\lambda,\lambda,\lambda,-3\lambda))} (\Phi (\sigma_1 \sigma_2 \sigma_1)) = \frac{1}{A^2-1} \begin{pmatrix} -A & -A \\ \frac{(A^2-1)^2}{A} + A & A \end{pmatrix}.
\end{aligned}
\]
These matrices are obtained from the appropriate products of matrices $M_i(A,A,A)$ with $i=1,2$ and after some renormalization making the determinant of $QS$ and $QT$ being equal to $1$, keep using the fact that we are considering projective matrices well defined up to multiplication by a scalar. We remark that they are well defined for $A\neq 0, \pm 1$. 

One can verify that $QS(A)^3 = QT(A)^2 = - \Id$ so that they are sent to the unit element of $\PSL(2,\BZ)$, this guarantees that we have a representation of $\PSL(2,\BZ)$. 
Let $P$ be the following matrix:
\[
P= \begin{pmatrix}
0 & 1 \\ \frac{A^2-1}{A} & -1
\end{pmatrix}
\] 
It is invertible for a generic choice of $A$ ($A \neq \pm 1 , 0$). 

Let's consider the representation $\Psi$ of $G$ obtained by conjugation of $\Phi$ by $P$, we have the following:
\[ \begin{aligned}
CS(A):= P^{-1} QS(A) P =  \begin{pmatrix} 0 & -1 \\ -1 & 0  \end{pmatrix} = S
\\
CT(A):= P^{-1} QT(A) P =  \begin{pmatrix} 0 & \frac{1}{A} \\ -A & 1  \end{pmatrix}
\end{aligned}
\]
For $A=1$ we get $CT(1)=T$ and we get that $\Psi$ is the standard representation of $\PSL(2,\BZ)$, which is faithful by definition. 

For $A=1$ we are not in the generic case, and $\Phi$ is not well defined (nor conjugated to $\Psi$).

Using the fact that the entries of matrices are meromorphic functions in the parameter $A$, and the density of possibilities for the choice of $A$ we get that these representations are generically faithful as follows.

Let $g\in \Mcg(0,4)$ and $G(A)$ its image under the above representation $\Psi$. As the entries are  analytic in $A$, let $L_g$ be the domain where $G(A) \neq \Id$. As $G(A) = \Id$ corresponds to zeros of analytic functions, it corresponds to isolated values of $A$ so that $L_g$ is dense in $\BC$. This is due to the fact that $\Psi$ is faithful for $A=1$ which guarantees that the functions considered are not zero everywhere and that its zeros are isolated. Hence, $\Psi$ is faithful for: $$A \in \bigcap_{g\in \Mcg(0,4)}L_g,$$  which is a countable intersection of dense spaces, hence dense by Baire's theorem. 
This proves that the representation $\Psi$ is generically faithful, and since it is generically conjugated to $\Phi$, $\Phi$ is also a generically faithful representation of $\PSL(2,\BZ)$. 
\end{proof}

\begin{thm}\label{M(0,4)faithful}
The projective representation $\Phi$ of  $\Mcg(0,4)$ is faithful.
\end{thm}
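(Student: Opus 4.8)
The plan is to exploit the semidirect decomposition $\Mcg(0,4)=G\ltimes N$ of Lemma \ref{M04PSL2}, together with the faithfulness of $\Phi$ on the subgroup $G\cong\PSL(2,\BZ)$ established in Proposition \ref{TQFTfaithfulonG}, using the $\fS_4$-grading of the representation space $\cV\otimes\BC[\fS_4]$ as the bookkeeping device that records the puncture-permutation type of a mapping class. Since $\Phi$ is only projective, what I must show is that $\Phi(g)$ being a scalar operator forces $g=e$ (for generic colour $\lambda$).

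First I would isolate the grading lemma. By Definition \ref{quantumrepM04} the operator $\Phi(\sigma_i)$ sends the summand $\cV\otimes\{\tau\}$ into the \emph{single} summand $\cV\otimes\{(i,i+1)\tau\}$; composing, $\Phi(g)$ maps $\cV\otimes\{\tau\}$ into $\cV\otimes\{\perm(g)\tau\}$, where $\perm\colon\Mcg(0,4)\to\fS_4$ is the permutation homomorphism with $\perm(\sigma_i)=(i,i+1)$. A scalar operator preserves every graded piece, so if $\Phi(g)$ is scalar then $\perm(g)\tau=\tau$ for all $\tau$, i.e. $\perm(g)=e$. This yields $\ker\Phi\subseteq\ker\perm$.

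Next I would run the elementary permutation combinatorics on the two factors. On $G=\langle\sigma_1,\sigma_2\rangle$ we have $\perm(\sigma_1)=(1\,2)$ and $\perm(\sigma_2)=(2\,3)$, so $\perm(G)=\fS_{\{1,2,3\}}$ fixes the fourth puncture. On $N=\langle\alpha,\beta\rangle$ one computes $\perm(\alpha)=(1\,2)(3\,4)$, while $\perm(\beta)$ is another double transposition, so $\perm(N)$ is the Klein four-group $\{e,(1\,2)(3\,4),(1\,3)(2\,4),(1\,4)(2\,3)\}$; since $|N|=4=|\perm(N)|$, the restriction $\perm|_N$ is injective, and every nontrivial element of $N$ moves the fourth puncture. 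Consequently $\perm(G)\cap\perm(N)=\{e\}$. Combining: given $g\in\ker\Phi$, write it uniquely as $g=hn$ with $h\in G$ and $n\in N$; the grading lemma gives $\perm(g)=e$, hence $\perm(h)=\perm(n)^{-1}\in\perm(G)\cap\perm(N)=\{e\}$, so $\perm(n)=e$ and injectivity of $\perm|_N$ forces $n=e$. Then $g=h\in G$ with $\Phi(h)$ scalar, and Proposition \ref{TQFTfaithfulonG} gives $h=e$, whence $g=e$.

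The genuinely hard analytic content — faithfulness of the two-dimensional projective $\PSL(2,\BZ)$-representation via the Baire-category/meromorphicity argument — is already discharged in Proposition \ref{TQFTfaithfulonG}, so no obstacle remains there. The only point in the present argument requiring care is the grading lemma: I must be sure that $\Phi(g)$ carries each summand $\cV\otimes\{\tau\}$ into a \emph{single} summand rather than spreading it across the grading, which is precisely the structure hard-wired into Definition \ref{quantumrepM04}. Granting this, the remainder is a short group-theoretic computation, and the notable feature is that I never need faithfulness of $\Phi|_N$: the $\fS_4$-grading alone detects the normal factor $N$, since its nontrivial elements are distinguished purely by their permutation type.
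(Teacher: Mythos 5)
Your proposal is correct and follows essentially the same route as the paper: use the $\fS_4$-grading to show that any kernel element is pure, then use the semidirect decomposition $G\ltimes N$ and the fact that every nontrivial element of $N$ moves $p_4$ (while $\perm(G)$ fixes it) to reduce to the faithfulness on $G$ from Proposition \ref{TQFTfaithfulonG}. Your write-up is in fact slightly more careful than the paper's on two points — making explicit that projectivity only requires ruling out scalar operators, and verifying injectivity of $\perm|_N$ — but the argument is the same.
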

\begin{proof}
Let $h \in \Mcg(0,4)$. Suppose $h$ is in the kernel of $\Phi$. As $\Mcg(0,4) = G\ltimes N$, there exists a unique decomposition $h=g\cdot a$ with $g \in G$ and $a \in N$. For $h$ to be in the kernel of $\Phi$, $\perm(h)$ must be the identity permutation. This comes from the fact that only pure mapping classes are sent to block diagonal matrices. It implies $\perm(g) = \perm(a)^{-1}$. We've noticed at the beginning of the proof of Proposition \ref{TQFTfaithfulonG} that $\perm(g)$ fixes $p_4$, so that $a$ must also fix $p_4$. The element $a$ is one of the following: $\alpha, \beta , \alpha \beta$ of $1$ ($\alpha$ and $\beta$ are the generators of $N$ recalled above). One remarks that $\alpha$ sends $p_4$ in $p_3$, $\beta$ sends $p_4$ in $p_2$ and by $\alpha \beta$, $p_4$ is sent in $p_1$. This shows that the only possibility for $a$ is $1$ (i.e. the only element fixing $p_4$). Then $h = g \in G$ and can't be in the kernel of $\Phi$ from Proposition \ref{TQFTfaithfulonG}. 
\end{proof}


\begin{coro}
Let $\Phi \in \Mcg(0,4)$ be a pseudo-Anosov mapping class. The stretching factor of $\Phi$ is detected by the TQFT representation $\BV$.
\end{coro}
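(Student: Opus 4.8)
The plan is to identify the stretching factor with a spectral radius that is already transparent on the $\PSL(2,\BZ)$ side of the exact sequence \ref{AMUexact}, and then to transport this information to the quantum representation by means of Proposition \ref{TQFTfaithfulonG}. Throughout I write $\phi$ for the pseudo-Anosov mapping class (denoted $\Phi$ in the statement), reserving $\Phi$ for the quantum representation.

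First I would recall the geometric description of dilatations coming from the hyperelliptic involution (Example \ref{4sphere}). A mapping class $\phi$ lifts to an affine map $\phi_{A,v}\colon x\mapsto Ax+v$ of the torus $T^2$, and $\phi$ is pseudo-Anosov precisely when $A\in\SL(2,\BZ)$ is hyperbolic, i.e.\ $|\tr A|>2$. The invariant foliations are the eigendirections of the linear part $A$, and the dilatation is insensitive to the translation $v$; passing to the quotient $T^2/\{\pm\Id\}$ one gets that the stretching factor of $\phi$ equals the spectral radius of $A$, that is, of $\iota(\phi)\in\PSL(2,\BZ)$. Writing $\phi=g\cdot a$ with $g\in G$ and $a\in N=\Ker\iota$, this shows $\lambda(\phi)$ depends only on $g$, since $\iota(\phi)=\iota(g)$.

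Next I would use Proposition \ref{TQFTfaithfulonG}. After conjugating $\Phi|_G$ by $P$ one obtains the family $\Psi$ whose generators $CS(A)=S$ and $CT(A)=\left(\begin{smallmatrix}0 & 1/A\\ -A & 1\end{smallmatrix}\right)$ are analytic in $A$ and specialize at $A=1$ to the standard generators of $\PSL(2,\BZ)$. Hence for the word $g$ the matrix $\Psi(g)(A)$ extends analytically across $A=1$, where $\Psi(g)(1)$ is exactly the standard $\PSL(2,\BZ)$-matrix representing $\iota(g)$. Its (normalized) trace $\tau(A)=\tr\Psi(g)(A)$ is therefore analytic with $\tau(1)=\tr\iota(\phi)$, and the stretching factor is read off as the larger root
\[
\lambda(\phi)=\frac{|\tau(1)|+\sqrt{\tau(1)^2-4}}{2}
\]
of the characteristic polynomial of the hyperbolic element $\iota(\phi)$. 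Because $\Phi$ is faithful (Theorem \ref{M(0,4)faithful}), the element $\phi$, and in particular its component $g$, is determined by the quantum representation, so this spectral quantity is genuinely extracted from $\BV$; equivalently, one expects the spectral radius of the full operator $\Phi(\phi)$ on $\cV\otimes\BC[\fS_4]$ to equal $\lambda(\phi)$, the contribution of each $\perm(\phi)$-cycle being computed by a suitable power $\phi^{\ell}$ that falls back to the $G$-computation above.

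The main obstacle is the degeneration at $A=1$: both $\Phi$ and the conjugating matrix $P$ are singular there, so one cannot simply evaluate $\Phi(\phi)$ at $A=1$. The point to justify carefully is that the conjugated family $\Psi(g)(A)$ nevertheless continues holomorphically through $A=1$ — which is exactly what the explicit formulas for $CS(A)$ and $CT(A)$ in Proposition \ref{TQFTfaithfulonG} provide — so that $\tau(1)$ is a bona fide limit of spectral data of $\Phi$ at generic $A$. A secondary bookkeeping point is the reduction of an arbitrary pseudo-Anosov to its $G$-component, for which one combines the dilatation's independence of the finite part $a\in N$ with the faithfulness of Theorem \ref{M(0,4)faithful} to separate $\phi$ from the elements of $N$ and from permutations.
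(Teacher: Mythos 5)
Your argument is essentially the paper's own proof: the paper likewise combines the faithfulness of Theorem \ref{M(0,4)faithful} with the fact (Lemma 3.6 of \cite{AMU}, which you re-derive via the hyperelliptic involution) that the stretching factor is detected as an eigenvalue of the image in $\PSL(2,\BZ)$. The extra discussion of analytic continuation at $A=1$ and the tentative claim about the spectral radius of the full operator $\Phi(\phi)$ are supplementary and not needed for the conclusion.
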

\begin{proof}
From Theorem \ref{M(0,4)faithful}, the TQFT representation detects the representation of $\Mcg(0,4)$ in $\PSL(2,\BZ)$ and from Lemma 3.6 of \cite{AMU} the stretching factor is detected (as an eigenvalue) in the $\PSL(2,\BZ)$ representation of $\Mcg(0,4)$. 
\end{proof}

\section{TQFT representations of bigger mapping class groups}\label{TQFTbiggerspheres}

\subsection{The ADO set-up for braid representations}\label{typicalrep}

The category $\cat$ is a category of $\Uq$ modules for which the $\RT$ functor is shown to work, see \cite{BCGP2}. Namely let $\beta$ be a braid, and $\lambda_1 , \ldots , \lambda_n$ be complex parameters.

\begin{rmk}[ADO representations of the braid group]\label{ADOrep}
The restriction of the Reshetikhin-Turaev functor to braid associates a morphism of $\Ubar$-modules:
\[
\RT(\beta) \in \Hom_{\Ubar} \left( V_{\lambda_1} \otimes \cdots \otimes V_{\lambda_n} , V_{\perm(\beta)(\lambda_1)} \otimes \cdots \otimes V_{\perm(\beta)(\lambda_n)} \right)
\] 
as typical and atypical modules are objects of $\cat$. 
In particular, it provides a representation of $\PBn$ on $\End_{\Ubar}\left(V_{\lambda_1} \otimes \cdots \otimes V_{\lambda_n}\right)$. 
This representation is computable using the braiding defined in Definition \ref{Rmatrixfortypical}. 
\end{rmk}

\begin{defn}[ADO polynomials, \cite{ADO}]\label{ADOpolynomialnaive}
Let $\cat'$ be the sub-category of $\cat$ made of typical modules. A modified version of the Reshetikhin-Turaev functor restricted to $\cat'$-colored knots provides a knot invariants first introduced in \cite{ADO}, see \cite{ITO1} for another reference. This family of invariants is known as {\em colored Alexander polynomials} or {\em ADO polynomials}. 
\end{defn}

\subsection{Relations with punctured spheres}

Braid groups are mapping class groups of punctured disks. To relate them to mapping class groups of punctured spheres, we need an operation to pass from the disk to the sphere so that we introduce the {\em capping} operation. 

Let $S$ be a surface with boundary, and $S'$ be the surface obtained from $S$ by closing one boundary component with a punctured disk. 
Set $p_0$ to be the puncture of the capping disk. Let $\beta$ be the loop in $S'$ corresponding to the boundary component of $S$. The group $\Mod (S, \{ p_1, \ldots , p_k \})$ is the subgroup of $\Mod (S)$ consisting of elements that fix the marked points $p_1,\ldots, p_k$, where $k\ge 0$, while $\Mod(S', \{p_0, p_1, \ldots , p_k\})$ is the subgroup of $\Mod(S')$ consisting of elements that fix the marked points $p_0,p_1,\ldots, p_k$. Then let $\Caping : \Mod(S, \{ p_1, \ldots , p_k\}) \to \Mod(S', \{p_0, p_1, \ldots , p_k\})$ be the induced homomorphism defined as follows: let $f$ be a homeomorphism of $S$ fixing $\partial S$ and representing a class of $\Mod(S, \{ p_1, \ldots , p_k\})$, and $\hat{f}$ be the homeomorphism of $S'$ which coincides with $f$ in $S$ and is the identity outside. Then $\Caping$ sends the class of $f$ to the one of $\hat{f}$ which turns out to be in $\Mod(S', \{p_0, p_1, \ldots , p_k \})$.

\begin{proposition}[{\cite[4.2.5]{F-M}}]\label{cappingboundarysequence}
The morphism $\Caping$ satisfies the following exact sequence:
\[
1 \to \langle \tau_{\beta} \rangle \to \Mod(S, \{ p_1, \ldots , p_k \}) \xrightarrow{ \Caping } \Mod(S', \{p_0, p_1, \ldots , p_k \}) \to 1
\]
where $\tau_{\beta}$ refers to the Dehn twist along $\beta$ and the first injection is the inclusion.
\end{proposition}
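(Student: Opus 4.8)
The plan is to check exactness at the three nontrivial positions. Since $\Caping$ is defined by extending a homeomorphism of $S$ by the identity across the capping disk $D$ (whose single marked point is $p_0$ and whose boundary is the curve $\beta$), it is by construction a well-defined homomorphism: an isotopy rel $\partial S$ extends to an isotopy rel $\{p_0,\dots,p_k\}$, and extension-by-identity is compatible with composition. Exactness at $\langle\tau_\beta\rangle$ is merely the injectivity of a subgroup inclusion, hence automatic, so the real content is the surjectivity of $\Caping$ together with the equality $\Ker\Caping=\langle\tau_\beta\rangle$.

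For surjectivity I would start from a class $[\phi]\in\Mod(S',\{p_0,\dots,p_k\})$. Because $\phi$ fixes the marked point $p_0$, a standard local isotopy (Alexander's trick applied near $p_0$) lets one isotope $\phi$, rel all marked points, so that it becomes the identity on a closed disk neighborhood of $p_0$; choosing the capping disk $D$ inside this neighborhood, $\phi$ then restricts to a homeomorphism of $S\cong S'\setminus\mathrm{int}(D)$ fixing $\partial S$ pointwise. This restriction represents a class of $\Mod(S,\{p_1,\dots,p_k\})$ mapping to $[\phi]$, so $\Caping$ is onto.

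The inclusion $\langle\tau_\beta\rangle\subseteq\Ker\Caping$ is the easy half of the kernel computation. In $S'$ the curve $\beta$ bounds the once-marked disk $D$, and the twist $\tau_\beta$, supported in an annulus whose outer boundary is $\beta$, can be undone by rigidly rotating the inner once-marked disk: since that disk contains only the single marked point $p_0$ at its center, the rotation fixes $p_0$, and the resulting one-parameter family is an isotopy from $\mathrm{id}$ to $\tau_\beta$ inside $S'$ rel $\{p_0,\dots,p_k\}$. Hence $\Caping(\tau_\beta)=1$.

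The main obstacle is the reverse inclusion $\Ker\Caping\subseteq\langle\tau_\beta\rangle$, where one must verify that nothing beyond the boundary twist dies. I would organize this through the fibration obtained by restricting a homeomorphism to the capping disk,
\[
\Homeop(S)\longrightarrow \Homeop(S',\{p_0\})\longrightarrow \mathrm{Emb}_{p_0}(D,S'),
\]
whose base is the (path component of the) space of embedded disks around $p_0$ sending center to $p_0$, and whose fiber is the group of homeomorphisms equal to the identity on $D$, that is, essentially $\Homeop(S)$. The associated long exact sequence of homotopy groups reproduces exactly the claimed capping sequence, the kernel of $\Caping$ being the image of $\pi_1\big(\mathrm{Emb}_{p_0}(D,S')\big)$. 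This fundamental group is infinite cyclic, generated by the full rotation of $D$ about $p_0$, and the connecting map carries this generator precisely to the Dehn twist $\tau_\beta$; equivalently this is the Birman point-pushing/framing analysis applied to the capping disk. Concretely, given $[f]\in\Ker\Caping$ one fixes an isotopy of $\hat f$ to the identity in $S'$ and measures its failure to preserve $D$ by a winding number $n\in\BZ$, obtaining $[f]=\tau_\beta^{\,n}$. Finally $\langle\tau_\beta\rangle$ is infinite cyclic since $\beta$ is an essential boundary curve of $S$, although only the subgroup equality is needed for exactness.
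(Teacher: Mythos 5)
Your sketch is correct: the paper does not prove this proposition but simply cites it from Farb--Margalit, and your argument is precisely the standard one given there (isotope a mapping class to be the identity on the capping disk for surjectivity; undo $\tau_\beta$ by rotating the once-marked disk since $p_0$ sits at its center; identify $\Ker\Caping$ with the image of $\pi_1$ of the space of embeddings of the marked disk, an infinite cyclic group whose generator, the full rotation, maps to $\tau_\beta$ under the connecting homomorphism of the fibration). Nothing further is needed.
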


We use this operation to relate representations of braid groups to quantum representation of mapping class group of punctured spheres. 
First we relate quantum (pure) braid representations with non semi-simple TQFT representations of the (pure) mapping class groups of punctured spheres. We restrict the study to {\em pure} groups for an easier reading, but everything can be generalized to whole groups by considering the {\em induced representations}.

Let $q$ be a $2r^{th}$ root of unity, $\lambda_1 , \ldots , \lambda_n \in (\C\setminus \Z)\cup r\Z$ be complex colors, $V_{\lambda_1} , \ldots V_{\lambda_n}$ be the associated typical modules of $\cat$, and set $V = V_{\lambda_1} \otimes \cdots \otimes V_{\lambda_n}$. Let $\beta \in \PBn$ be a braid and $\RT(\beta) \in \End( V)$ its ADO-type representation introduced in Remark \ref{ADOrep} defined using the functor $\RT$. Let $\cV$ be the ($0$-graded) TQFT vector space associated by $\BV$ with the sphere with $n+1$ punctures, $p_1,\ldots , p_{n+1}$ and such that: $p_1$ is decorated by $V_{\lambda_1}$ and so on until $p_n$ is decorated by $V_{\lambda_n}$ and $p_{n+1}$ is decorated by $V^*$ the dual space of $V$. We recall the capping morphism from Proposition \ref{cappingboundarysequence} in the case of the disk with $n$ marked points:
\[
\Caping : \PBn \to \PMcg(0,n+1)
\]
where $\PBn$ is the pure braid group on $n$ strands, and $\PMcg(0,n)$ is the pure mapping class group of the sphere with $(n+1)$ punctures. Namely, the latter corresponds to the Torelli group of the punctured sphere and is made of mapping classes that leave the punctures fixed pointwise. The morphism $T = \BV \circ \Caping$ restricted to $\cV$ (the $0$-graded sub-space) provides a representation of $\PBn$ ($\BV$ is the TQFT functor). 

We recall from \cite[Proposition~6.1]{BCGP2} that the ($0$-graded) TQFT space $\cV$ is isomorphic to $\Hom_{\cat} ( \BI , V \otimes V^*)$. 

Let $\phi$ be the following injective morphism:
\[
\phi : \bapp
\End(V) & \to & \End\left( \Hom_{\cat} ( \BI , V \otimes V^*) \right) \\
M & \to & M \otimes \Id
\eapp
\]

\begin{lemma}\label{commutationRTandTQFT}
The following diagram is commutative:
\[
\begin{tikzcd}[column sep=small]
\PBn \arrow{r}{\RT}  \arrow{rd}{T} 
  & \End(V) \arrow{d}{\phi} \\
    & \End \left( \Hom_{\cat} ( \BI , V \otimes  V^*) \right) 
\end{tikzcd}
\]
\end{lemma}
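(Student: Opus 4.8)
The plan is to show that the two composites $\phi\circ\RT$ and $T=\BV\circ\Caping$ agree on every pure braid $\beta\in\PBn$, by unwinding the geometric definition of the universal construction and appealing to the functoriality of $\RT$.

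First I would recall, following Proposition~\ref{TQFTtoHom} and Remark~\ref{sketchofBHMV}, that a vector of $\cV$ is represented by a $\cat$-decorated tangle in the ball $B^3$ bounding the $(n+1)$-punctured sphere, whose strands end at the punctures $p_1,\ldots,p_{n+1}$ colored respectively by $V_{\lambda_1},\ldots,V_{\lambda_n},V^*$; the $\RT$-functor then interprets such a tangle as a morphism $f\colon\BI\to V\otimes V^*$. Under this identification the endomorphism $\phi(\RT(\beta))$ acts by post-composition, $f\mapsto(\RT(\beta)\otimes\Id_{V^*})\circ f$, since by definition $\phi(M)=M\otimes\Id$ with $M=\RT(\beta)\in\End(V)$ braiding the first $n$ tensor factors.

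Next I would unwind $T(\beta)=\BV(\Caping(\beta))$. By Remark~\ref{mcgTQFTrep}, $\BV$ sends the mapping class $\Caping(\beta)$ to the endomorphism of $\cV$ obtained by gluing its mapping cylinder on top of the cobordism representing $f$. The crucial point is that, by the definition of the capping homomorphism (Proposition~\ref{cappingboundarysequence}), $\Caping(\beta)$ coincides with $\beta$ on the subdisk containing $p_1,\ldots,p_n$ and is the identity on the capping disk carrying $p_{n+1}$. Consequently, gluing the mapping cylinder stacks the braid $\beta$ onto the first $n$ strands of $f$ while leaving the strand ending at $p_{n+1}$ untouched; the orientation reversal along the capping boundary is precisely what produces the dual coloring $V^*$ there.

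Finally, since $\RT$ is a monoidal functor (Theorem~\ref{RTfunctor}), stacking $\beta$ on the first $n$ strands and the identity on the last strand is interpreted as post-composition by $\RT(\beta)\otimes\Id_{V^*}$. Hence $T(\beta)(f)=(\RT(\beta)\otimes\Id_{V^*})\circ f=\phi(\RT(\beta))(f)$ for all $f$, which is exactly the commutativity of the diagram. I expect the main obstacle to be the second step: one must verify carefully that the capping operation genuinely confines the braiding to the first $n$ punctures, so that the last factor $V^*$ is acted on by the identity, and that the framing contributions introduced by $\BV$ on the two sides of the triangle coincide so that no spurious scalar appears.
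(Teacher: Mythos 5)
Your argument is correct and follows essentially the same route as the paper's proof: both unwind $T(\beta)=\BV(\Caping(\beta))$ by gluing the mapping cylinder onto a generating cobordism, observe that $\Caping(\beta)$ is the identity on the capping disk containing $p_{n+1}$ so that the $V^*$ factor is acted on trivially, and identify the action on the remaining $n$ strands with $\RT(\beta)$ by construction of the TQFT. Your additional remarks on post-composition, monoidality, and framing are a reasonable fleshing-out of details the paper leaves implicit.
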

\begin{proof}
For a braid $\beta \in \PBn$ we must show that:
\[
T(\beta) = \RT(\beta) \otimes \Id_{V^*}.
\]
By composition by $\Caping$, $\PBn$ acts over the $(n+1)$ punctured sphere by mapping classes fixing $p_{n+1}$. The mapping cylinder associated with an element of $\PBn$ is the identity cobordism in a small disk containing $p_{n+1}$ as the only puncture. By gluing this cylinder to a cobordism generating the TQFT interpreted as an element of $\Hom_{\cat} ( \BI , V \otimes V^*)$, it is easy to see that the morphism is the identity over the $\Uq$-module decorating $p_{n+1}$, namely $V^*$, and on the the $n$ other punctures it is by construction obtained by applying the $\RT$-functor. It proves the lemma.
\end{proof}

\begin{prop}\label{typicalincludedTQFT}
If there exists a $2r^{th}$ root of unity $q$ and colors $\lambda_1 , \ldots , \lambda_n \in (\C\setminus \Z)\cup r\Z$ such that the representation $\RT$ of $\PBn$ is faithful then the representation of $\PMcg(0,n+1)$ (suitably decorated as above) provided by $\BV$ is faithful. 
\end{prop}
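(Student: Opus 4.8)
The plan is to transport faithfulness of $\RT$ across the capping exact sequence of Proposition~\ref{cappingboundarysequence} by means of the commutative triangle of Lemma~\ref{commutationRTandTQFT}. Write $\rho = \BV|_{\PMcg(0,n+1)}$ for the projective representation whose faithfulness is at stake, so that $T = \rho\circ\Caping = \phi\circ\RT$. Since $\Caping$ is onto with kernel $\langle\tau_\beta\rangle$, any class $\bar h$ in the (projective) kernel of $\rho$ admits a lift $h\in\PBn$ with $\Caping(h)=\bar h$; then $\phi(\RT(h)) = T(h) = \rho(\bar h)$ is a scalar operator on $\cV$. The whole argument consists in showing that this forces $h$ into $\langle\tau_\beta\rangle$, whence $\bar h = 1$.

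First I would explain why a scalar value of $\phi(\RT(h))$ already makes $\RT(h)$ scalar. Using the identification $\cV\cong\Hom_\cat(\BI, V\otimes V^*)\cong\End_\cat(V)$, the operator $\phi(M)\colon f\mapsto (M\otimes\Id)\circ f$ becomes left multiplication by $M$ on the algebra $\End_\cat(V)$; evaluating on $\Id\in\End_\cat(V)$ shows that $\phi(M)=\mu\,\Id$ implies $M=\mu\,\Id$. Applied to $M=\RT(h)$ this yields $\RT(h)=\mu\,\Id$ for some scalar $\mu$, which is exactly the injectivity of $\phi$ made quantitative.

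Next I would use faithfulness of $\RT$ to pin down $h$. If $\RT(h)$ is scalar it commutes with $\RT(g)$ for all $g\in\PBn$, so $\RT(hgh^{-1}g^{-1})=\Id$, and faithfulness gives $hgh^{-1}g^{-1}=1$; hence $h\in Z(\PBn)$. As the center of $\PBn$ is the infinite cyclic group generated by the full twist, which is precisely the boundary Dehn twist $\tau_\beta$ generating $\ker\Caping$, we conclude $h\in\langle\tau_\beta\rangle$ and therefore $\bar h=\Caping(h)=1$. Thus $\rho$ has trivial projective kernel, i.e.\ is faithful.

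The point requiring the most care is the projective bookkeeping, and it is also what keeps the statement consistent. Because $\tau_\beta$ becomes trivial after capping (Proposition~\ref{cappingboundarysequence}), $T(\tau_\beta)=\rho(\mathrm{id})$ must act as a scalar, so $\RT(\tau_\beta)$ is forced to be a scalar (generally $\neq\Id$); this is exactly why $\rho$ descends to $\PBn/\langle\tau_\beta\rangle=\PMcg(0,n+1)$ only projectively and not linearly, and it is fully compatible with $\RT$ being honestly faithful. Beyond this, the only inputs external to the diagram chase are the computation $Z(\PBn)=\langle\tau_\beta\rangle$ and the identification of $\phi$ with the left regular representation of $\End_\cat(V)$; I expect the main difficulty to be conceptual rather than computational, namely keeping the scalar ambiguities straight throughout the triangle of Lemma~\ref{commutationRTandTQFT}.
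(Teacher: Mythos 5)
Your proof is correct and follows the same skeleton as the paper's: the commutative triangle of Lemma~\ref{commutationRTandTQFT}, the surjectivity of $\Caping$, and the injectivity of $\phi$. The paper's own proof is essentially just that one sentence — lift an element of the kernel along $\Caping$, push it through the triangle, and invoke injectivity of $\phi$ and faithfulness of $\RT$ — and it silently treats everything as honest linear representations. Where you go further is in the projective bookkeeping, and this addition is not cosmetic: as you observe, if $\rho$ were linear then $T(\tau_\beta)=\Id$ would force $\RT(\tau_\beta)=\Id$, contradicting the faithfulness hypothesis since $\tau_\beta\neq 1$ in $\PBn$; so the statement only makes sense projectively, and the paper glosses over this. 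Your resolution — a scalar value of $\phi(\RT(h))$ forces $\RT(h)$ scalar (via the identification of $\phi$ with left multiplication on $\End_\cat(V)$ and evaluation at $\Id_V$), scalarity plus faithfulness forces $h\in Z(\PBn)$, and $Z(\PBn)=\langle\Delta^2\rangle=\langle\tau_\beta\rangle=\Ker\Caping$ — closes the gap cleanly and uses only standard facts. In short: same route as the paper, but your version is the one that actually proves the projective statement; the two extra inputs you import ($Z(\PBn)=\Ker\Caping$ and the left-regular reading of $\phi$) are exactly what is needed and are correctly deployed.
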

\begin{proof}
The proof is a direct consequence of Lemma \ref{commutationRTandTQFT}, of the surjectivity of $\Caping$ and of the injectivity of $\phi$. Namely, as for any $\beta \in \PBn$:
\[
\BV \circ \Caping (\beta) = \RT(\beta) \otimes \Id_{V^*}
\]
(previous lemma) and that any element in $\Mcg(0,4)$ can be written $\Caping (\beta)$ for some $\beta$, if $\RT$ is faithful so is $\BV$. 
\end{proof}

The latter shows that the question of the faithfulness of punctured sphere mapping class group TQFT representations is included in the following open question.

\begin{oquestion}\label{ADOfaithfulness}
Are the ADO representations of braid groups introduced in Remark \ref{ADOrep} faithful? 
\end{oquestion}

From \cite{Jules1} for instance, it is known that quantum representations are recovered by Lawrence's representations \cite{Law} (in \cite{Jules1} the case of $q$ being a root of unity is treated). In \cite{Big1} and \cite{Kra}, it is proved that the second level of Lawrence representations are generically faithful, and from \cite{Z2} that the family of Lawrence representations are faithful in general (except for the first level). The word {\em generically} stands for a generic set of parameters $(q,\lambda_1 , \ldots , \lambda_n) \in \BC^{n+1}$. For instance the faithfulness proof of \cite{Big1} relies on the {\em key lemma}. This lemma uses extensively the Laurent polynomial structure of coefficients.
Its proof crashes down whenever one wants to specialize the proof for $q$ being a root of unity. In this sense, the quantum representations are ``generically faithful'' but the question whether they are faithful at roots of unity (ADO set-up) is still open, so is the question of the faithfulness of TQFT representations of the punctured spheres. 

The case of the torus was studied in \cite{BCGP2} and led to an analog of Theorem \ref{M(0,4)faithful}. 

\begin{theorem}[{\cite[Theorem~6.28]{BCGP2}}]
The non semi-simple TQFT projective representation of the mapping class group of the torus, provided by the functor $\BV$, is faithful modulo its center. 
\end{theorem}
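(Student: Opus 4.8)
The plan is to transport the strategy of Proposition \ref{TQFTfaithfulonG} and Theorem \ref{M(0,4)faithful} through the identification $\Mod(T^2)\cong\SL(2,\BZ)$, generated by the modular transform $S$ (with $S^2=-I$) and the Dehn twist $T$ along the core curve. The hyperelliptic involution $S^2=(ST)^3=-I$ generates the center of $\SL(2,\BZ)$, and a projective representation sends it to a scalar; thus ``faithful modulo its center'' means exactly that $\BV$ induces an injection $\PSL(2,\BZ)\to\PGL(\BV(T^2))$, where $\PSL(2,\BZ)\cong\BZ/2*\BZ/3$ is generated by $\bar S$ of order $2$ and $\overline{ST}$ of order $3$. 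First I would record the two operators $\BV(S)$ and $\BV(T)$ in the colored-curve basis used throughout this paper: $\BV(T)$ is the Dehn twist, acting diagonally through the ribbon/twist coefficients, while $\BV(S)$ is the modular $S$-matrix furnished by the Hopf-link (Gauss-sum) pairing. The cohomological decoration of the torus in the functor $\BV$ (a class in $H^1(T^2;\BC)$, see \cite[Subsection~3.3]{BCGP2}) supplies a continuous parameter $A$, playing here the role the color $\lambda$ played for $\Mcg(0,4)$.

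Second, I would verify the projective relations $\BV(S)^2\eg\Id$ and $(\BV(S)\BV(T))^3\eg\Id$, so that $\BV$ factors through $\PSL(2,\BZ)=\langle \bar S,\overline{ST}\mid \bar S^2,(\overline{ST})^3\rangle$ and the kernel of $\SL(2,\BZ)\to\PGL(\BV(T^2))$ is exactly the central $\langle S^2\rangle$. For injectivity I would then run the analytic-specialization argument of Proposition \ref{TQFTfaithfulonG}: the entries of $\BV(S)$ and $\BV(T)$ are meromorphic in the decoration parameter $A$, and at a distinguished value they should reduce, after a fixed conjugation, to the standard matrices generating $\PSL(2,\BZ)\subset\PGL(2,\BC)$, which is faithful by definition. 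For each nontrivial $g\in\PSL(2,\BZ)$ the locus $\{A : \BV(g)\neq\Id\}$ is then the complement of the zero set of a nonzero meromorphic function, hence dense, and a countable Baire intersection over all $g$ yields a dense set of parameters at which $\BV$ is injective, upgraded to the stated $q$ as in Theorem \ref{M(0,4)faithful}. A parameter-free alternative is a ping-pong argument on $\mathbb{P}(\BV(T^2))$ adapted to the free product $\BZ/2*\BZ/3$, driven by the fact that $\BV(T)$ has infinite order (\cite[Theorem~1.3]{BCGP2})---precisely the non-semisimple feature absent in the classical Reshetikhin--Turaev case.

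The hard part will be genuine (not merely generic) injectivity on the free product $\BZ/2*\BZ/3$. Because reduced words alternating between $\bar S$ and $\overline{ST}^{\pm 1}$ have unbounded length, either the Baire argument must be controlled uniformly over this infinite family, or the ping-pong regions for $\BV(S)$ and for $\BV(ST),\BV(ST)^2$ must be shown to be genuinely disjoint in $\mathbb{P}(\BV(T^2))$. Pinning down the eigenvalue dynamics of the explicit modular $S$-matrix against the infinite-order twist $\BV(T)$ is the delicate point, and is where the specific arithmetic of the Gauss sums defining $\BV(S)$ must be exploited.
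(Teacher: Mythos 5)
You should first note that this paper does not actually prove the statement: it is imported verbatim as \cite[Theorem~6.28]{BCGP2}, quoted only to put Theorem \ref{M(0,4)faithful} in context. So there is no in-paper proof to match your argument against; the relevant comparison is with the proof in \cite{BCGP2}, which proceeds by writing down the explicit $S$- and $T$-matrices on the graph basis of $\BV(T^2)$ and arguing directly from those formulas, not by a genericity argument.

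On the merits, your outline has one genuine gap, which you partly flag yourself but do not close. The Baire/analytic-specialization argument transported from Proposition \ref{TQFTfaithfulonG} only yields faithfulness for a dense set of values of the continuous decoration parameter (here the cohomology class in $H^1(T^2;\BC/2\BZ)$, rather than a color $\lambda$), i.e.\ \emph{generic} faithfulness. The quoted theorem is an unconditional statement about the representation furnished by $\BV$, so ``upgraded to the stated $q$ as in Theorem \ref{M(0,4)faithful}'' is doing all the work and is not justified: a countable intersection of dense open sets need not contain the particular specialization the TQFT actually uses, and nothing in your sketch controls the zero loci uniformly over the infinitely many reduced words in $\BZ/2 * \BZ/3$. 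Your proposed escape routes are the right instincts --- either exploit the explicit arithmetic of the Gauss sums in $\BV(S)$ against the infinite-order diagonal twist $\BV(T)$, or set up a genuine ping-pong on $\mathbb{P}(\BV(T^2))$ --- but neither is carried out, and the delicate point you identify (disjointness of the ping-pong domains, equivalently the eigenvalue dynamics of the explicit matrices) is exactly where the proof must live. As written, the proposal establishes at best that the representation factors through $\PSL(2,\BZ)$ and is faithful for a dense set of decorations, which is strictly weaker than the statement.
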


These ``small" cases (in terms of genus) are first steps for an answer to the following general question. 

\begin{question}[{\cite[Question~1.7.(1)]{BCGP2}}]
Let $\Sigma$ be a surface. Is the non semi simple TQFT representation of $\Mod( \Sigma )$ over $\BV(\Sigma)$ faithful?
\end{question}

%

\newpage
\thispagestyle{empty}

\end{document}